\documentclass[11pt, reqno]{amsart}
\usepackage{latexsym, amsmath, amssymb, longtable, booktabs,amscd, color}
\usepackage[english]{babel}
\usepackage[utf8]{inputenc}
\usepackage{mathtools}
\usepackage{ stmaryrd }
\usepackage{verbatim}
\usepackage{enumitem}
\usepackage[all]{xy}
\usepackage{tikz}
\usepackage{tikz-cd}
\numberwithin{equation}{section}
\usepackage{xcolor}
\usepackage{hyperref}
\hypersetup{
    colorlinks,
    linkcolor={red!50!blue},
    citecolor={blue!50!black},
    urlcolor={blue!80!black}
}

\newcommand\Q{\mathbb{Q}}
\newcommand\ind{\mathrm{ind}}

\newcommand\N{\mathbb{N}}

\newcommand\sF{{\mathcal{F}}}

\newcommand\GL{{\mathrm{GL}}}

\newcommand{\Z}{\mathbb{Z}}

\newcommand{\F}{\mathbb{F}}

\newcommand\Gal{{\mathrm {Gal}}}

\newcommand\Sym{{\mathrm {Sym}}}

\newcommand\linee{{
        {\begin{tiny}
        \begin{xymatrix}{
         \bullet  \ar@{-}[d] \\ 
         \bullet  \ar@{-}[d] \\
         \bullet} 
       \end{xymatrix}
       \end{tiny}} }}  
\newcommand\diamondd{{
        \begin{tiny}
        \begin{xymatrix}{
         & \bullet  \ar@{-}[dl] \ar@{-}[dr] &  \\ 
         \bullet  \ar@{-}[dr] &   &   \bullet  \ar@{-}[dl] \\
         & \bullet & } 
       \end{xymatrix}
       \end{tiny} }}

\newtheorem{thm}{Theorem}[section]
\newtheorem{theorem}[thm]{Theorem}
\newtheorem{cor}[thm]{Corollary}

\newtheorem{prop}[thm]{Proposition}
\newtheorem{lemma}[thm]{Lemma}
\theoremstyle{definition}

\theoremstyle{remark}
\newtheorem{remark}[thm]{Remark}

\theoremstyle{definition}

\theoremstyle{remark}

\theoremstyle{remark}

\title[]{Behaviour of certain crystalline representations modulo $2$}

\author{Shalini Bhattacharya and Arathy Venugopal}
\address{University of Hyderabad and IISER Tirupati  }
\email{shalinib@uohyd.ac.in, arathyvenugopal@students.iisertirupati.ac.in}

\subjclass[2010]{Primary: 11F80} 

\keywords{Reductions of Galois representations, Local Langlands Correspondence, Hecke operators.}
\date{May, 2026}
\begin{document}
\begin{abstract}
We compute the explicit form of the semisimplified reduction modulo $2$  of the $2$-adic crystalline Galois representations $V_{k,a_2}$ at small slopes in $(0,1]$, using the compatibility of $2$-adic and mod-$2$ local Langlands correspondence. We find parameters $\alpha'(k,a_{2})$ and $\alpha(k,a_{2})$, which play a crucial role in determining the reduction of $V_{k,a_{2}}$ for slopes in the range $(0,1)$ and slope $1$ respectively.  
\end{abstract}
\maketitle

\section{Introduction}\label{introduction}
Let $p$ be any prime number and $\wp$ be the maximal ideal in $\bar{\Z}_{p}$. Let $v$ be the $p$-adic valuation on $\bar{\Q}_{p}$ normalised such that $v(p)=1$. 
Let $G_p$ denote the absolute Galois group $\Gal(\bar\Q_p|\Q_p)$. 
Let $E$ be a finite extension of $\Q_{p}$ and let $a_{p} \in E$ satisfy $\nu :=v(a_{p})>0$.  Given an integer $k\geq 2$ and $a_{p}$ as above, the symbol $V_{k,a_{p}}$ denotes the unique irreducible two-dimensional crystalline representation of $G_p$ with Hodge-Tate weights $0$ and $k-1$, such that the characteristic polynomial of the crystalline Frobenius is $X^2-a_{p}X+p^{k-1}$. The value $\nu=v(a_p)$ is referred to as the `slope' of the representation $V_{k,a_p}$.
   One can 
   choose a Galois stable integral lattice in $V_{k,a_p}$ and reduce it modulo $\wp$. 
   The semisimplification of this reduction of $V_{k,a_p}$, denoted by $\bar{V}_{k,a_{p}}$, is known to be independent of the choice of the lattice. For odd primes or sufficiently large primes, the explicit structure of $\bar V_{k,a_p}$ under various conditions can be found in the literature. However, due to technical difficulties, the prime $p=2$ has been avoided in most of these works. In this article we attempt to fill this gap in the existing literature, by treating the problem for $p=2$ at small slopes.
   
   The method of using  local Langlands correspondence (LLC) for the reduction of Galois representations was first introduced by Breuil in 2003 \cite{[Br03]}. 
   In this method, one reduces modulo $p$ the  unitary $p$-adic  Banach space representation  $B(V_{k,a_{p}})$  which is attached to $V_{k,a_{p}}$ via the $p$-adic LLC; a functorial construction of the $p$-adic LLC using $(\varphi,\Gamma)$-modules can be found in \cite{Colmez10}. Berger proved in \cite{Berger10} that the reduction $\overline{B(V_{k,a_{p}})}$ lies in the image of  mod-$p$ LLC  and  corresponds to $\bar{V}_{k,a_{p}}$ under the same. Since mod $p$ LLC is an injective correspondence, it is enough to compute $\overline{B(V_{k,a_{p}})}$ to determine $\bar{V}_{k,a_p}$.
	This way of computing the reduction in the `other side' of LLC is found to be particularly effective at small slopes, applied in the papers \cite{[BG09]}, \cite{Buzzard-Gee13}, \cite{[GG15]}, \cite{Bhattacharya-Ghate}, \cite{Bhattacharya-Ghate-Rozensztajn18}, \cite{Arsovski}, \cite{[GR18]},  \cite{[NP25]}. More details about our methodology can be found in \S 2, with focus on the special prime $p=2$. 

    Before stating our main theorems, we will introduce some notations. Let $I_{p}$ be the inertia subgroup of $G_{p}$ and
$\omega_1,\omega_2:I_{p}\rightarrow \bar\F_p^\times$ denote the fundamental characters of level one and  two respectively. Note that $\omega_{1}$ can be extended to all of $G_{p}$ as the cyclotomic character mod $p$, also denoted by $\omega_1$ here. For $n\geq 1$ and $p+1 \nmid n$, let $\rho=\mathrm{ind}(\omega_2^n)$ denote the unique irreducible mod $p$ representation of $G_{p}$ such that $\det(\rho)=\omega_1^n$ and $\rho|_{I_p}=\omega_2^n\oplus\omega_2^{np}$.  For $p=2$, the character $\omega_2$ is of order $3$. Let us also mention here that for $p=2$, the fundamental character of level one is the trivial character on $G_p$ and up to twists by unramified characters, any irreducible two-dimensional representation of $G_p$ over $\bar\F_p$ is isomorphic to $\ind (\omega_2)$. Now let us state our first result, proved in \S 3, giving the complete structure of the reduction $\bar V_{k,a_2}$ when the slope $v(a_2)$ is less than $1$. 

\begin{theorem}\label{ThmA}
			Let $k\geq 4$, $0 <v(a_{2}) < 1$. For $r:=k-2$, let $\alpha':= \frac{a_{2}^{2}-2r^{2}}{2a_{2}}$, $\tau': = v\left(\alpha'\right)$ and $t:=v(r-1)$. Then we have: 
		\begin{enumerate}
				\item If $\tau'< t$, then $\bar{V}_{k,a_{2}} \cong \ind(\omega_{2})$.
				\item If $\tau'\geq t$, then $\bar{V}_{k,a_{2}} \cong \mu_{\lambda} \oplus \mu_{\lambda^{-1}}$,\\ where $\lambda \in \bar{\F}^{\times}_{2}$ is determined by the equation $\lambda^2+c\lambda+1=0$ with $c=\overline{\frac{\alpha'}{r-1}}\in\bar\F_2.$
			\end{enumerate}
		\end{theorem}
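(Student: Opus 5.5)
We follow the Breuil--Berger method recalled in the introduction. Write $G=\GL_2(\Q_2)$, $K=\GL_2(\Z_2)$, $Z$ its centre, and $V_r=\Sym^r\bar\F_2^2$ with $r=k-2$. By Berger's compatibility it suffices to determine the reduction $\overline{\Theta}_{k,a_2}$ of the standard lattice
\[
\Theta_{k,a_2}\subset \ind_{KZ}^G \Sym^r\bar\Q_2^2/(T-a_2),
\]
where $T$ is the Hecke operator. This reduction is a quotient of $\ind_{KZ}^G V_r$. The first step is to show that the natural surjection $\ind_{KZ}^G V_r\twoheadrightarrow \overline{\Theta}_{k,a_2}$ factors through a small quotient. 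The main tool is the family of functions $[g,\, x^{r-i}y^i]$, their sums $\sum_{\lambda\in I_n}[g^0_{n,\lambda},\,\cdot\,]$, and the standard formula for $T$ on these. For $0<v(a_2)<1$ one constructs $f$ with $(T-a_2)f$ integral, whose reduction kills everything except the image of the submodule $\langle x^r,\ y^r\rangle$. Since $\omega_1$ is trivial for $p=2$, the relevant Jordan--H\"older factors of $V_r$ are $V_0$ and $V_1$, up to twists by powers of $\det$, which are trivial mod $2$ on $\Q_2^\times$ after the central character is fixed. So $\overline{\Theta}_{k,a_2}$ is a quotient of $\ind_{KZ}^G V_1$ or of $\ind_{KZ}^G V_0$.

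Step two separates the two cases of Theorem \ref{ThmA}. We use the functions
\[
f_1=\Bigl[g^0_{1,0},\ \tfrac{1}{a_2}\sum_j c_j x^{r-j}y^j\Bigr]+[1,\,\cdot\,],
\]
with $c_j$ binomial-type coefficients (for instance $\binom{r}{j}$ restricted to $j$ odd, or $j\equiv r-1$), chosen so that $(T-a_2)f_1$ has leading term
\[
\frac{a_2^2-2r^2}{2a_2}=\alpha' \quad\text{times } [1,x^{r-1}y],
\]
plus a term of size $r-1$ times $[1,\theta\cdot(\cdot)]$-type vectors, with $v(r-1)=t$. Evaluating $T$ contributes $a_2$ from one piece and $-2r^2/(2a_2)$ from the $\lambda$-sums, via $\sum_\lambda \lambda^j$ and the expansion of $(x+\lambda y)^r$ to second order. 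This is where $\alpha'$ arises. Scaling by the inverse of the larger of $|\alpha'|$ and $|r-1|$ gives two cases:

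\begin{itemize}
\item If $\tau'<t$, the reduction shows that the image of $[1,x^{r-1}y]$, i.e.\ the $V_1$-part, is killed modulo the $V_0$ relations. Then $\overline{\Theta}$ is a quotient of $\ind_{KZ}^G V_1/T$, which is supersingular and corresponds to $\ind(\omega_2)$.
\item If $\tau'\ge t$, we obtain a relation in $\ind_{KZ}^G V_0$ of the form $T^2+cT+1$ acting trivially, with $c=\overline{\alpha'/(r-1)}$. Then $\overline{\Theta}$ is a quotient of $\ind_{KZ}^G V_0/(T^2+cT+1)$, whose semisimplification has Jordan--H\"older factors principal series $\pi(0,\lambda)$ and $\pi(0,\lambda^{-1})$, up to Steinberg/trivial factors. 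Mod $2$ LLC sends this to $\mu_\lambda\oplus\mu_{\lambda^{-1}}$.
\end{itemize}

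The final step is to pin down the answer exactly. Since $\overline{\Theta}$ has the same size as a reduction of a $2$-dimensional Galois representation, and by Berger it lies in the image of mod $2$ LLC, a nonzero quotient of the small module above must be all of it in the semisimplified sense. This gives the stated $\bar V_{k,a_2}$. We will use $k\ge 4$ so that $r\ge2$ and the coefficients $\binom{r}{j}$ used above make sense.

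The main obstacle is the $2$-adic bookkeeping in step two. For $p=2$ the congruences $\sum_{\lambda\in\F_2}\lambda^j$ are very degenerate, and the second-order terms in $(T-a_2)f_1$ have valuations comparable to $\alpha'$ and $r-1$. We must show that every other term has valuation strictly greater than $\min(\tau',t)$, which requires splitting into cases by the parity of $r$ and by the sizes of $v(r)$ and $v(r-1)$. We would first try handling $r$ odd versus even separately, using Lucas-type estimates on $v\binom{r}{j}$ and $v\bigl(\sum_{j\equiv i}\binom{r}{j}\bigr)$.
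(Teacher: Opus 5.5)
Your framework is the paper's: pass to $\bar\Theta_{k,a_2}$, show it is a quotient of a small induced module, extract a Hecke relation from a function $f$ rescaled by an element of valuation $\min(\tau',t)$, and apply mod $2$ LLC. However, Step one is backwards and Step two is missing its actual content.

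\textbf{Step one.} It is the $K$-module generated by $x^r,y^r$, i.e.\ $X_r$, that dies. Since $v(a_2)>0$ and $r\ge 2$, one has $(T-a_2)[1,X^{r-1}Y]\equiv -[g^0_{1,1},X^r]\pmod{\wp}$. As $[g^0_{1,1},X^r]$ generates $\mathcal{I}(X_r)$, this gives $\mathcal{I}(X_r)\subseteq\ker P$ (Lemma \ref{RemarkBG09} with $m=0$). Separately, $\mathcal{I}(V_r^{(1)})\subseteq\ker P$ because $v(a_2)<1$.

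Hence $\bar\Theta$ is a quotient of $\mathcal{I}\bigl(V_r/(X_r+V_r^{(1)})\bigr)\cong\mathcal{I}(V_0)$, the surviving line being spanned by $\overline{X^{r-1}Y}$ (Proposition \ref{VrtoV0}). There is no ``$\mathcal{I}(V_1)$ or $\mathcal{I}(V_0)$'' alternative. Keeping $x^r,y^r$ would put you in $\mathcal{I}(X_r/X_r^{(1)})\cong\mathcal{I}(V_1)$, which contradicts your own case $\tau'\ge t$.

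Your case $\tau'<t$ also misdescribes the mechanism. The element $[1,x^{r-1}y]$ maps to the generator $[1,1]$ of $\mathcal{I}(V_0)$, and what one proves is $T[1,1]\in\ker$, so $\bar\Theta$ is a quotient of $\pi(0,0,1)$. Your final answer agrees only because $\pi(1,0,1)\cong\pi(0,0,1)$ for $p=2$.

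\textbf{Step two.} The main gap is that $f$ is never constructed, and the shape you suggest cannot give case (ii). For $f$ supported on $\{1,g^0_{1,0}\}$, the support of $(T-a_2)f$ lies in $\{g^1_{0,0},1,g^0_{1,0},g^0_{1,1},g^0_{2,0},g^0_{2,2}\}$, a set of diameter $3$. But $(T^2+cT+1)[1,1]=\sum_{d(g,1)=2}[g,1]+c\,T[1,1]$ needs all six vertices at distance $2$, including $g^0_{2,1},g^0_{2,3},g^1_{1,0},g^1_{1,1}$. In fact every nonzero element of $(T^2+cT+1)\mathcal{I}(V_0)$ has support of diameter at least $4$.

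Moreover, dividing by $\beta$ with $v(\beta)=\min(\tau',t)$, which can be as large as $t$, creates non-integral terms that propagate outward:
\begin{itemize}
\item The paper has to place $\frac{r}{a_2\beta}F$ and $\frac{r(r-1)}{a_2\beta}F$ at radius $2$, and the analogous terms with $G(X,Y)$ at radius $-2$.
\item Their $T^+$ gives $-\frac{2r}{a_2\beta}X^{r-1}Y$ at radius $3$, which is non-integral once $v(\beta)>1-v(a_2)+v(r)$.
\item This forces tails $\frac{r}{a_2\beta}\bigl(\frac{2r}{a_2}\bigr)^{n-2}\bigl([g^0_{n,0},F]-[g^0_{n,1},F]\bigr)$ and $\frac{r}{a_2\beta}\bigl(\frac{2r}{a_2}\bigr)^{n-2}[g^1_{n-1,0},G]$ for $3\le n\le\frac{t+1}{1-v(a_2)+v(r)}+1$.
\item One then needs a radius-by-radius check that everything is integral and vanishes modulo $(\wp,X_r+V_r^{(1)})$ beyond distance $2$.
\end{itemize}

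You also miss the simplification that $\tau'\ge t$ forces $r$ odd and $v(a_2)=\frac12$ (Lemma \ref{Lemmatomt}). This is what makes $2r/a_2^2$ a unit, allows $\beta=r(r-1)$, and turns $\overline{\alpha'/(r(r-1))}$ into $c=\overline{\alpha'/(r-1)}$. The closing LLC step is fine.
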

        An easy analysis of the quantity $\alpha'=\alpha'(k,a_2)$ and its valuation $\tau'$ defined above leads us to the following quick consequences:
        \begin{cor}\label{cor1}
        Let us assume  $k\geq 2$ and $0<v(a_2)<1$. Then
        
        (a) if $k$ is even,  the reduction $\bar V_{k,a_2}$ must be irreducible.
         
           (b)  if $\bar V_{k,a_2}$ is reducible, then $k$ is odd and the slope $v(a_2)=1/2$.
         \end{cor}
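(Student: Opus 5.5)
The plan is to derive both parts directly from Theorem \ref{ThmA} by computing the valuation $\tau'$ of $\alpha'=\frac{a_2}{2}-\frac{r^2}{a_2}$ and comparing it with $t=v(r-1)$. Since Theorem \ref{ThmA} only covers $k\ge 4$, the weights $k=2,3$ have to be handled separately first.

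\emph{Small weights.} For $k=2,3$ we have $k\le p+1=3$, so I will not use Theorem \ref{ThmA}. Instead I will invoke the classical computation for small weights $2\le k\le p+1$ and positive slope, due to Breuil via the LLC method recalled in \S 2. It gives $\bar V_{k,a_p}\cong\ind(\omega_2^{k-1})$, which is irreducible for $p=2$ because $3\nmid k-1$ for $k\in\{2,3\}$. Since this case is not proved in the text above, it is the step I regard as the main point to double-check. Should that reference not cover $p=2$, the fallback is a direct computation on the Banach side. For $r=k-2\in\{0,1\}$ the lattice is generated by $\Sym^r$ and the Hecke operator $T$, and the quotient is the supersingular module, whose image under mod-$p$ LLC is irreducible.

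\emph{Valuation of $\alpha'$ for $k\ge4$.} Write $\nu=v(a_2)\in(0,1)$ and $r=k-2$. The two terms of $\alpha'$ have valuations
\[
v\!\left(\tfrac{a_2}{2}\right)=\nu-1<0,\qquad v\!\left(\tfrac{r^2}{a_2}\right)=2v(r)-\nu .
\]

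\emph{Proof of (a).} If $k$ is even, then $r$ is even. Hence $2v(r)-\nu\ge 2-\nu>0>\nu-1$, so the ultrametric inequality is strict and $\tau'=\nu-1<0$. On the other side, $r-1$ is odd, so $t=0$. Therefore $\tau'<t$, and Theorem \ref{ThmA}(1) gives $\bar V_{k,a_2}\cong\ind(\omega_2)$, which is irreducible. This proves (a).

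\emph{Proof of (b).} If $\bar V_{k,a_2}$ is reducible, then by (a) and the small-weight case, $k$ is odd with $k\ge5$. Now $r$ is odd, so $v(r^2/a_2)=-\nu<0$, and $r-1$ is even, so $t\ge1$. Suppose $\nu\ne\frac12$. Then $\nu-1\ne-\nu$, and hence $\tau'=\min(\nu-1,-\nu)<0<t$. Theorem \ref{ThmA}(1) then gives irreducibility, a contradiction. So $\nu=\frac12$, which proves (b).

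Only in the case $\nu=\frac12$ can cancellation between the two terms push $\tau'$ up to $t$. This is consistent with Theorem \ref{ThmA}(2), although (b) itself does not need that part.
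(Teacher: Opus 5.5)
Your proposal is correct and follows the paper's route. The paper deduces the corollary from Theorem \ref{ThmA} via Lemma \ref{Lemmatomt}, whose proof is exactly your computation that $\tau'<0\le t$ unless $r$ is odd and $v(a_2)=\tfrac12$. For $k=2,3$ the paper instead cites the Fontaine--Edixhoven result \cite{Edixhoven92}, which it states holds for $p=2$ as well, so your attribution to Breuil and your fallback are not needed. The fallback would nonetheless also work: $T\,\mathcal{I}(V_r)\subseteq\ker P$ makes $\bar\Theta$ a nonzero quotient of the irreducible supersingular representation $\pi(r,0,1)$.
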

        We can compare Theorem \ref{ThmA} with the main result in \cite{Buzzard-Gee13}, where a similar dichotomy is stated for odd primes, slopes in $(0,1)$ and $k\equiv 3\mod (p-1)$. There the shape of $\bar V_{k,a_p}$ is shown to be determined by $\tau:=v(\alpha)$, where $\alpha:=\frac{a_p^2-pr}{pa_p}$. This can also be seen as a special case of the Zig-zag phenomenon conjectured in \cite{[G21]} and proved in \cite{[G22]}. However, note that the relevant expression $\alpha'$ in Theorem \ref{ThmA} above is slightly different from $\alpha$, or  the general expression in the Zig-zag conjecture. In fact for $p=2$, the values $t=v(r-1)$ and $\tau=v(\alpha)$ solely cannot determine the structure of $\bar V_{k,a_2}$ in all cases. For a comparative study, we refer to the examples  in Table $1$, $\S5$.

        Next let us state our second result giving a complete structure of $\bar V_{k,a_2}$ when slope $v(a_2)$ equals $1$. The proof of this theorem can be found in \S4, and some relevant numerical examples can be found in Table $2$, $\S5$.
\begin{theorem}\label{ThmB}
   Let $k\geq4$ and $v(a_{2})=1$. For $r:=k-2$, let
    $\alpha := \frac{a_{2}^2-2^2{r \choose 2}}{2a_{2}}, \tau:= v(\alpha)$ and  $t:=v(r-2)$. Then we have: 
    \begin{enumerate}
        \item If $\tau\leq t-1$, then $\bar{V}_{k,a_{2}} \cong \mu_{\lambda}\oplus \mu_{\lambda^{-1}}$, where $\lambda \in \bar{\F}_{2}^{\times}$ satisfies $\lambda^2-c\lambda + 1=0$, \\with $c = \overline{\frac{a_{2}}{2} + \frac{r-2}{2\alpha}}$ unless $r-2=\alpha=0$, in which case we take $c=\overline{\frac{a_2}{2}}$.
        \item If $\tau>t-1, \,\bar{V}_{k,a_{2}} \cong \ind(\omega_{2})$.
    \end{enumerate}
\end{theorem}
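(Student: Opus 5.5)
We follow the Breuil--Berger method recalled in \S2. For $p=2$ and $r=k-2$, let $G=\GL_2(\Q_2)$, $K=\GL_2(\Z_2)$ and $Z$ its centre, and let $T$ be the Hecke operator. Then $\overline{B(V_{k,a_2})}$ is a quotient of $\mathrm{ind}_{KZ}^G \Sym^r\bar\F_2^2$ by the image of the lattice $\Theta_{k,a_2}$. By the compatibility with mod-$2$ LLC it therefore suffices to find a Jordan--H\"older factor $J$ of $\Sym^r\bar\F_2^2$ and a polynomial in $T$ killing the image of $\mathrm{ind}_{KZ}^G J$. For $p=2$ the only weights are $V_0$ (trivial) and $V_1=\Sym^1$. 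Conjugating by an unramified twist, the outcome $\ind(\omega_2)$ corresponds to a supersingular quotient $\pi(1,0,\eta)$ with weight $V_1$. The outcome $\mu_\lambda\oplus\mu_{\lambda^{-1}}$ corresponds to $\pi(0,\lambda,\eta)$ with weight $V_0$, that is, $\mathrm{ind}\, V_0/(T^2-cT+1)$ up to the usual identification of $\lambda+\lambda^{-1}$ with $c$.

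\emph{Step 1.} First we reduce to a small quotient. Since $v(a_2)=1$, the standard functions $[g,X^{r}]$ and $[g,Y^{r}]$ together with the binomial sums $\sum_\lambda \lambda^{j}[g\beta_\lambda, X^{r-i}Y^i]$ show, as in \cite{Bhattacharya-Ghate}, that only the subquotient of $\Sym^r$ generated by $\theta$-multiples and the bottom monomials survives. Concretely, with $\theta=X^2Y-XY^2$, $\overline{\Theta}$ is a quotient of $\mathrm{ind}_{KZ}^G(\Sym^r/\langle \theta^2\Sym^{r-6}\rangle)$. The relevant factors are $V_1$ (from $X^r$, and more generally from the top of $\Sym^r$) and $V_0\otimes\det^{?}$, coming from $\theta\Sym^{r-3}$ modulo $\theta^2$ or from $X^{r-1}Y$. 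The dependence on $r-2$ enters here: for $p=2$ one has $\binom{r}{2}\equiv r(r-1)/2$, and whether the weight-$V_0$ piece splits off is governed by $v(r-2)$. I will split according to the parity of $r$ and the value of $t=v(r-2)$.

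\emph{Step 2.} Next we build explicit functions. I consider $f=[1,\,\sum_{\lambda\in\Z_2/4} c_\lambda (\lambda X+Y)^r]$ combined with $[\beta,\cdots]$ over $\mathbb Z/4$, together with a correction $\tfrac{a_2}{2}[\,\cdot\,,\theta X^{r-3}]$. I then compute $(T-a_2)f$ using $T[g,v]=\sum_{\mu\in\{0,1\}}[g\binom{2\ \mu}{0\ 1}, v(X,-\mu X+2Y)] + [g\binom{1\ 0}{0\ 2}, v(2X,Y)]$. The expansions produce $a_2^2$ and $4\binom r2$ terms from the second-order contributions, and these combine exactly into $2a_2\alpha$. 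After scaling by $1/\alpha$ or $1/(r-2)$, whichever has smaller valuation, there are two cases.

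If $\tau\le t-1$, we scale by $\alpha^{-1}$. The $V_1$ contribution then becomes divisible by $(r-2)/(2\alpha)$, which is integral. The image of $(T-a_2)f$ reduces to $(T^2-cT+1)$ applied to a generator of the $V_0$ piece, with $c=\overline{a_2/2+(r-2)/(2\alpha)}$. In the degenerate case $r-2=\alpha=0$ only the $a_2/2$ term remains.

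If $\tau>t-1$, we scale instead by $2/(r-2)$. This gives $T\cdot[\,\cdot\,,\text{$V_0$ generator}]\in\overline{\Theta}$ modulo lower terms. Together with the relation $T=0$ on $V_1$ coming from $[1,X^r]$, this forces a quotient of $\mathrm{ind}\,V_1/(T)$, which is supersingular, so $\bar V\cong\ind(\omega_2)$.

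\emph{Step 3.} Finally we eliminate the unwanted factor in each case. Irreducibility of $\overline{B}$ or the length considerations of \cite{Berger10} show that the surviving quotient is the whole of $\overline{B(V_{k,a_2})}$. The main obstacle will be Step 2 at $p=2$. The binomial coefficient congruences $\binom{r}{j}\bmod 2^m$ behave irregularly, and the $\mathbb Z/4$ sums needed for precision $2^2$ generate extra cross terms. Checking that these cancel, so that exactly $\alpha$ and $r-2$ remain, requires careful $2$-adic bookkeeping. I would first do it for small representatives of $r$ modulo $8$, and then argue by Lucas-type congruences.
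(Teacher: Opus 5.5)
Your outline has the right overall shape: reduce to $\mathcal{I}(V_r/V_r^{(2)})$, find Hecke relations on the surviving weights, then compare with the image of mod-$2$ LLC. However, it misidentifies which Jordan--H\"older factors survive, so the relations you aim for cannot determine $\bar\Theta$.

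\textbf{The wrong factors.} Since $v(a_2)=1>0$, Lemma~\ref{RemarkBG09} gives $\mathcal{I}(X_r)\subseteq\ker P$. So the $V_1$ spanned by $X^r,Y^r$ contributes nothing, and your ``relation $T=0$ on $V_1$ coming from $[1,X^r]$'' is vacuous. Conversely, $V_r^{(1)}/V_r^{(2)}=\theta V_{r-3}/\theta V_{r-3}^{(1)}$ is not a single $V_0$. For $r\ge5$ it is $V_1\oplus V_0$, with $V_1$ spanned by $\theta X^{r-3},\theta Y^{r-3}$ and $V_0$ by $\theta X^{r-4}Y$. It is this $V_1$ that carries the answer in both cases. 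For $p=2$ one has $\pi(1,\lambda,\eta)^{ss}\cong\pi(0,\lambda,\eta)^{ss}$, so there is no reason to insist that the principal series come from a $V_0$; doing so is what sends you to the wrong factor.

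\textbf{What must be shown for $r\ge5$.}
\begin{itemize}
\item $V_r/V_r^{(1)}$ dies entirely (Proposition~\ref{theta1}).
\item The $V_0$ from $\theta X^{r-4}Y$ is killed by $T-\overline{a_2/2}$ for even $r$ (Proposition~\ref{F3'}). For odd $r$ it is killed outright, since its generator is the image of $H\in X_r$.
\item The $V_1$ from $\theta X_{r-3}$ is killed by $T^2-c_0T+1$ if $\tau\le t-1$, and by $T$ if $\tau>t-1$ (Proposition~\ref{F2'}).
\end{itemize}
Even granting your relation $T^2-cT+1$ on a $V_0$-generator, you would have said nothing about the $\theta X_{r-3}$ factor, which is the one that is actually nonzero. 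Combined with the true relation $T-\overline{a_2/2}$ on that $V_0$, your relation would typically only show that the $V_0$ factor vanishes. Your Step~3 then has nothing to compare with the LLC image. The compatibility argument must also handle the boundary case $\tau=t-1$ with $\overline{2/a_2}=\overline{(2-r)/(2\alpha)}$, where $\mathcal{I}(V_1)/(T-2/a_2)$ and $\mathcal{I}(V_0)/(T-a_2/2)$ can both appear.

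\textbf{Step~2 is not a construction.} The normalising denominator $\beta_0\in\{\alpha,(r-2)/2\}$ has valuation $\min(\tau,t-1)$, which is unbounded. Nothing in your sketch (radius-$0$ terms plus a $\mathbb{Z}/4$-sum) explains how $(T-a_2)f$ becomes integral. In the paper this requires:
\begin{itemize}
\item spreading $f$ over radii $-t,\dots,t+2$, with coefficient $a_2^{n-3}/\beta_0$ on $Y^r-X^{r-2}Y^2$ at radius $n$, so that the tail becomes integral;
\item the correction term $\tfrac{2r}{a_2^3\beta_0}K(X,Y)$ at radius $0$;
\item the correction terms $-H/(a_2^2\beta_0)$ and $\tfrac{c_2}{a_2}X^{r-2}Y^2$ at radius $1$.
\end{itemize}
The cancellations among these rest on the $p=2$ estimates of Lemmas~\ref{sl1cmb1}--\ref{sl1par}. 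For the same reason, checking residues of $r$ modulo $8$ cannot suffice: the answer depends on $t=v(r-2)$, which is unbounded.

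\textbf{Small weights.} The cases $r=2,3,4$ need separate treatment (Theorem~\ref{k=4,5,6}). There Proposition~\ref{theta1} fails and $\mathcal{I}(V_r/V_r^{(1)})$ genuinely contributes; for example, $\mathcal{I}(V_0)/(T-2/a_2)$ appears for $r=4$.
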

To avoid confusion, we caution the reader that the symbol $t$ here is different from that in Theorem \ref{ThmA}. Analysing the valuation of $\alpha=\alpha(k,a_2)$ defined in Theorem \ref{ThmB}, we observe the following, which is in contrast with part $(a)$ of Corollary \ref{cor1} above.
\begin{cor}\label{cor2}
    If $v(a_2)=1$ and $k\geq 3$ is odd, then $\bar V_{k,a_2}$ must be irreducible.
\end{cor}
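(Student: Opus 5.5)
The plan is to deduce the corollary directly from Theorem \ref{ThmB} by checking that, for odd $k$, we are always in case (2). The weight $k=3$ lies outside the range $k\geq 4$ of Theorem \ref{ThmB} and will be handled separately.

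For $k\geq 5$ odd, put $r=k-2$, which is odd. Then $r-2$ is odd, so $t=v(r-2)=0$, and case (2) of Theorem \ref{ThmB} requires only $\tau>-1$. Write
\[
\alpha=\frac{a_2^2-2^2\binom{r}{2}}{2a_2}=\frac{a_2^2-2r(r-1)}{2a_2}.
\]
The denominator has valuation $v(2a_2)=2$. In the numerator, $v(a_2^2)=2$. Since $r$ is odd, $r-1$ is even, so $v(2r(r-1))=1+v(r-1)\geq 2$. Hence the numerator has valuation at least $2$, possibly $\infty$ if it vanishes, and therefore $\tau=v(\alpha)\geq 0>-1=t-1$. (If $\alpha=0$ then $\tau=\infty$, and the inequality still holds.) Theorem \ref{ThmB}(2) then gives $\bar V_{k,a_2}\cong\ind(\omega_2)$, which is irreducible.

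For $k=3$ we are in the range $2\leq k\leq p+1$ with $p=2$. Here I would invoke the standard result for small weights (Fontaine--Laffaille theory, or Breuil's computation for $k\leq p+1$): for any slope $v(a_p)>0$,
\[
\bar V_{k,a_p}\cong\ind(\omega_2^{k-1}).
\]
With $k=3$ this is $\ind(\omega_2^2)$. It is irreducible because $p+1=3\nmid 2$, and it is isomorphic to $\ind(\omega_2)$ since $\omega_2^2=\omega_2^{p}$ is the conjugate character.

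The only step needing care is this boundary case $k=3$. One must check that the small-weight result applies for $p=2$, or else redo the mod-$2$ LLC computation directly for $r=1$. The case $k\geq 5$ is a routine valuation count.
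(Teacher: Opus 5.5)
Your proposal is correct and follows essentially the paper's own route. For odd $k\geq 5$ you observe $t=v(r-2)=0$ and $\tau\geq 0>t-1$, which is exactly Remark \ref{remparityslope1}, so Theorem \ref{ThmB}(2) applies; the boundary case $k=3$ is handled, as the paper indicates in its introduction, by the Fontaine--Edixhoven result for $2\leq k\leq p+1$.
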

Theorem \ref{ThmB} can be compared with similar cases for odd primes, e.g., the `$b=2$' part of Theorem 1.1 of \cite{Bhattacharya-Ghate-Rozensztajn18}, or in general with the $\nu=1$ case of the Zigzag conjecture. We note that though the determining factors $\tau=v(\alpha)$  and $t=v(k-4)$ remain the same for odd primes ($>3$) and  $p=2$, the behavior of the general reduction $\bar V_{k,a_p}$ with variation of $\tau $ and $t$ are not quite the same for odd and even primes.  From both the theorems above, we observe a very common trend in number theory: the even prime $p=2$ behaves  differently than the other primes.

Let us mention here that the condition $k\geq 4$ in the main theorems is justified since the reduction is well-known for $2\leq k\leq 3=p+1$ by a general result of Fontaine and Edixhoven in \cite{Edixhoven92}  valid for all primes including $2$. The same result can be used to check the Corollaries \ref{cor1} and \ref{cor2} for $k=2$ and $3$. The smallest weight  
treated in this paper is $k=4$, for which our answer coincides with the one predicted by Breuil in a comment after Theorem $1.4$ in \cite{[Br03]}. We also note that by the main result of \cite{Berger-Li-Zhu04} and \cite{Bergdall-Levin}, the shape of $\bar V_{k,a_2}$ is known when $v(a_2)>\lfloor \frac{k-1}{2}\rfloor$. However, the slope range $(0,1]$ treated in this article is even lower than  this bound for all weights $k\geq 4$.   

Now let us give a brief outline of the proof of our theorems. Applying Local Langlands correspondence, the problem of determining $\bar V_{k,a_2}$ reduces to computing the reduction of certain lattice $\Theta_{k,a_2}$ in the automorphic side. One knows that the reduction $\bar\Theta_{k,a_2}$ is a quotient of an induced representation $\mathcal{I}(V_r)$, with $r=k-2$. For notations $V_r$, we refer to $\S 2$ below. Further in the case of slope $<1$, we checked that $\bar \Theta_{k,a_2}$ is in fact a quotient of $\mathcal{I}(V_0)$. Then the most tricky part of the proof is to distinguish between the possibilities of reducible and irreducible reductions, discussed in detail in $\S3$. For the case of slope one, discussed in $\S4$, our preliminary analysis shows that generically (for $r>4$) two sub-quotients $\mathcal{I}(V_0)$ and $\mathcal{I}(V_1)$ of $\mathcal{I}(V_r)$ can possibly contribute to $\bar\Theta_{k,a_2}$. Further we show that in most cases $\mathcal{I}(V_1)$ contributes, and $\bar\Theta_{k,a_2}$  
is either reducible or irreducible depending on whether $\tau\leq t-1$ or $\tau>t-1$. However, in some special cases, in addition to $\mathcal{I}(V_1)$ the factor $\mathcal{I}(V_0)$ may also contribute to $\bar\Theta_{k,a_2}$ resulting in a reducible answer. This can possibly occur only if $\tau=t-1$ and moreover if $\overline{2/a_2}=\overline{(2-r)/2\alpha}\in\bar\F_2^\times$.

The primary hindrance as to why the methodology for odd primes could not be emulated in the case of $p=2$ is technical. For instance, the identity $\displaystyle\sum_{\mu \in \F_{p}}[\mu]=0$ is significantly used in the calculation for odd primes whereas it fails trivially for $p=2$. Also some of the combinatorial lemmas which were crucial in the analysis of functions on the Bruhat-Tits tree in \cite{Buzzard-Gee13} and \cite{Bhattacharya-Ghate-Rozensztajn18} do not hold as such for $p=2$. We have given the modified versions of these lemmas for $p=2$ in this paper. While the modification appears to be minor in some cases, it has a non-trivial impact on the calculation leading to completely different results for $p=2$.


\section{background}
In this section, we will briefly recall the method of applying the local Langlands correspondence to compute the mod-$p$ reduction of crystalline representations. This method is applicable for all primes, but in this paper we will consider the specific case of $p=2$.
 %
\subsection{Hecke operators and mod-2 LLC}
  Let $G$ and $K$ denote the groups $\GL_{2}(\Q_{p})$ and $\GL_{2}(\Z_{p})$ respectively. We shall denote the compact induction functor $\ind_{K\Q_{p}^{\times}}^{G}$ by $\mathcal{I}$, with $\Q_p^\times$ being the centre of the group $G$. For any $\Z_{p}$-algebra $R$, the symmetric power  $\Sym^{r}R^{2}$ is modeled by the space of  homogeneous polynomials of degree $r$ in two variables $X$ and $Y$ over $R$. The group $K$ acts on the polynomials  in $ \Sym^{r}R^2$ by 
    $$ \left(\begin{smallmatrix}
      a & b\\
      c & d
  \end{smallmatrix}\right)\cdot f(X,Y)= f(aX+cY, bX+dY).$$ This action is extended to an action of $K\Q_{p}^{\times}$ by making the scalar matrix $\left(\begin{smallmatrix}
      p & 0\\
      0 & p
  \end{smallmatrix}\right) $ act trivially.
Given $g \in G$ and $v =v(X,Y)\in \Sym^{r}{R^{2}}$, the symbol $\left[g,v\right]$ denotes the $\Sym^{r}R^2$-valued function on $G$ defined as follows:   
       $$[g,v](g') = \begin{cases}
       (g'g)\cdot v &\text{ if } g' \in K{\Q}_{p}^{\times}g^{-1}\\
       0 & \text{ otherwise}.
       \end{cases}$$
       It is known that a general element of  $\mathcal{I}(\Sym^{r}R^{2})$ is a finite sum of elementary functions of the above type. Next let us recall the Hecke operator $T$, a distinguished operator acting on $\mathcal{I}(\Sym^r R^2)$ defined by the formula:
      \begin{equation}
          T[g,v(X,Y)] = \sum_{\mu \in \Z_{p}: \mu^{p}=\mu}\left[g\begin{pmatrix}
              p  & \mu\\
              0 & 1
          \end{pmatrix}, v(X, pY-\mu X)          \right] + \left[g\begin{pmatrix}
              1  & 0\\
              0 & p
          \end{pmatrix}, v(pX, Y) \right]     \end{equation} 
 It can be checked that $T$ is a $G$-linear map from $\mathcal{I}(\Sym^r R^2)$ to itself.

For $r \in \Z_{\geq 0}$, the group $\Gamma:=\GL_2(\F_p)$ acts naturally on the space $\Sym^{r}(\bar{\F}_{p}^2)$, and this action can be further inflated to an action of $K$. Let this representation space  $\Sym^{r}(\bar{\F}_{p}^2)$  be denoted by the symbol $V_{r}$. If $0\leq r\leq p-1$ is an integer, $\lambda \in \bar{\F}_{p}$ and $\eta:\Q_{p}^{\times} \rightarrow\bar{\F}_{p}^{\times}$ is a smooth character,  then we  know
$$\pi(r,\lambda,\eta) := \frac{\mathcal{I}(V_{r})}{(T-\lambda)} \otimes \eta$$ 
is a smooth admissible representation of $G$ over $\bar{\F}_{p}$ and is reducible if and only if $(r,\lambda) \in \{(0,\pm1),(p-1,\pm1)\}$. If irreducible, these representations are known as principal series when $\lambda \neq 0$ and supercuspidals when $\lambda=0$. All smooth admissible irreducible representations of $G$ are  sub-quotients of $\pi(r,\lambda, \eta)$ for some tuple $(r,\lambda,\eta)$ due to work of Barthel-Livne \cite{Barthel-Livne94} and Breuil \cite{Breuil1}. For a complete statement of classification of smooth admissible irreducible representations of $G$, we refer to Theorem $4.4$ of \cite{Breuil07}.

We shall now discuss the semi-simple mod-$p$ local Langlands correspondence. A general statement can be found in definition $1.1$ of \cite{[Br03]}. Here we will state the specific version for $p=2$. Let $\lambda \in \bar{\F}_{2}^\times$ and $\eta$ be a smooth character of $\Q_{2}^{\times}$. Then the mod-$2$ local Langlands correspondence can be stated as:
	\begin{eqnarray*}
		\textbf{Galois side} & & \textbf{Automorphic side}\\
			\ind(\omega_{2}) \otimes \eta &\leftrightarrow& \pi(0,0,\eta)\\
			(\mu_{\lambda} \oplus \mu_{\lambda^{-1}})\otimes \eta 	&\leftrightarrow & \pi(0,\lambda,\eta)^{ss} \oplus \pi(0,\lambda^{-1}, \eta)^{ss}
		\end{eqnarray*}
We note that unlike the odd primes, the statement of mod $p$ LLC for $p=2$ does not involve a variable $r$. 
This is not surprising since any irreducible two-dimensional representation of $G_{p}$ for $p=2$ can be expressed as $\ind(\omega_{2})\otimes \eta$ and one knows that in the automorphic side $\pi(1,\lambda, \eta)^{ss} \cong \pi(0, \lambda, \eta)^{ss}$. The later isomorphism can be verified from Theorem $1.3$ of \cite{Breuil1} for supercuspidals; part $2(b)$ of corollary $36$ of \cite{Barthel-Livne94} for principal series, and the description of composition factors for the reducible cases given in parts $2(b)$ and $2(c)$ of Theorem $30$ of \cite{Barthel-Livne94}.
        
%
 Let $B(V_{k,a_{2}})$ be the unitary $2$-adic Banach space representation attached to $V_{k,a_{2}}$ in the sense of \cite{Berger-Breuil10}. We recall that by the compatibility of $p$-adic and mod-$p$ LLC by Breuil and Berger, the image of $\bar{V}_{k,a_{2}}$ under mod-$2$ LLC coincides with $\overline{B(V_{k,a_{2}})}$.
 We now consider the locally algebraic representation $\Pi_{k,a_{2}}$:= $\dfrac{\mathcal{I}(\Sym^{k-2}\bar{\Q}_{2}^{2})}{(T-a_{2})}$ and let the image of $\mathcal{I}(\Sym^{k-2}\bar{\Z}_{2}^2)$ in $\Pi_{k,a_{2}}$ be denoted by $\Theta_{k,a_{2}}$. The explicit description for $\Theta_{k,a_{2}}$ is given by the following formula:
    $$  \Theta_{k,a_{2}}= \frac{\mathcal{I}(\Sym^{k-2}\bar{\Z}_{2}^{2})}{(T-a_{2})(\mathcal{I}(\Sym^{k-2}\bar{\Q}_{2}^{2}))\cap\mathcal{I}(\Sym^{k-2}\bar{\Z}_{2}^{2})}.$$ One knows that
    $\Theta_{k,a_{2}}$ is a lattice in $\Pi_{k,a_{2}}$ in the sense of \cite{[Br03]}. Further $B(V_{k,a_{2}})$ is the completion of $\Pi_{k,a_{2}}$ with respect to the `gauge' of $\Theta_{k,a_{2}}$ in the context of locally convex topology. The semi-simplified reduction $\overline{B(V_{k,a_{2}})}$ is isomorphic to $ \bar{\Theta}_{k,a_{2}}^{ss}:=(\Theta_{k,a_2}\otimes\bar\F_2)^{ss}$.
    Thus to compute $\bar{V}_{k,a_{2}}$, it is enough to compute $\bar{\Theta}_{k,a_{2}}^{ss}$ and then use the injectivity property of mod-$2$ LLC. 
\subsection{Filtration of $V_{r}$} \label{fil}
There is a natural map $P: \mathcal{I}(V_{r}) \to \bar\Theta:=\bar\Theta_{k,a_2}.$
Let us define
$\theta(X,Y):=X^{2}Y-XY^{2},$  which is fixed under the action of 
$\GL_{2}(\F_{2})$. Then the submodules $$V_{r}^{(n)} :=\begin{cases}\theta^{n}V_{r-3n} &\text{ if } r\geq 3n\\0 &\text{ if } r< 3n
\end{cases}$$ for $n\geq 0$ define a filtration of $V_r$. 
The following lemma enables us to characterize elements in $V_{r}^{(n)}$ for $n=1$ and $2$, which will be particularly useful in our context.

\begin{lemma}\label{Vr*}
Let $S(X,Y) = \displaystyle\sum_{0\leq j \leq r}a_{j}X^{r-j}Y^{j} \in \bar{\mathbb{F}}_{2}[X,Y]$. Then 
\begin{enumerate}
	\item	$S \in V_{r}^{(1)} \iff a_{0} = a_{r} = 0$ and $\sum_{j}a_{j} = 0 \in \bar{\mathbb{F}}_{2}$\\
	\item	$S \in V_{r}^{(2)} \iff a_{0} = a_{1} = a_{r-1}= a_{r} = 0$ and\\ $\displaystyle\sum_{j}a_{j} = \sum_{j}ja_{j}=0 \in \bar{\F}_{2},        \,   i.e.,\displaystyle\sum_{j \text{ even }}a_{j} = 0$ and $\displaystyle\sum_{j  \text{ odd }}a_{j} = 0$ in $\bar{\mathbb{F}}_{2}$.
\end{enumerate}	
\end{lemma}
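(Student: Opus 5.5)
The idea is to read divisibility by $\theta$ as vanishing conditions. Over $\bar{\F}_2$ we have $\theta = XY(X-Y) = XY(X+Y)$, a product of three distinct linear forms. Hence $S$ is divisible by $\theta$ exactly when $S$ vanishes at the three points $[1:0]$, $[0:1]$, $[1:1]$ of $\mathbb{P}^1(\F_2)$. Since $\bar{\F}_2[X,Y]$ is a UFD and the factors are pairwise coprime, $S \in V_r^{(1)} = \theta V_{r-3}$ if and only if $X \mid S$, $Y \mid S$ and $(X+Y) \mid S$ (for $r<3$ this is compatible with the convention $V_r^{(1)}=0$). The three divisibilities translate directly:
\begin{itemize}
\item $Y \mid S$ iff $S(1,0)=a_0=0$;
\item $X \mid S$ iff $S(0,1)=a_r=0$;
\item $(X+Y) \mid S$ iff $S(1,1)=\sum_j a_j=0$.
\end{itemize}
This gives part (1).

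For part (2) we need $\theta^2 = X^2Y^2(X+Y)^2$ to divide $S$, and we check each squared factor separately.
\begin{itemize}
\item $Y^2 \mid S$ iff $a_0=a_1=0$.
\item $X^2 \mid S$ iff $a_r=a_{r-1}=0$.
\item $(X+Y)^2 \mid S$ iff $S(1,1)=0$ and $(X+Y)$ also divides the quotient. Dehomogenise with $X=1$ and $f(y)=\sum_j a_j y^j$. Then $(1+y)^2 \mid f$ iff $f(1)=0$ and $f'(1)=\sum_j j a_j = 0$. This double-root criterion via the formal derivative is valid in any characteristic.
\end{itemize}
One point needs care: the dehomogenisation must not lose information. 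Since $X+Y$ is coprime to $X$, divisibility of the homogeneous $S$ by $(X+Y)^2$ is equivalent to divisibility of $f(y)=S(1,y)$ by $(1+y)^2$. This holds because $S = X^{r}f(Y/X)$ and $X\nmid (X+Y)$.

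Finally, in characteristic $2$ we have $\sum_j j a_j = \sum_{j \text{ odd}} a_j$. Combined with $\sum_j a_j = 0$, this is equivalent to both $\sum_{j\text{ even}} a_j=0$ and $\sum_{j\text{ odd}}a_j=0$, which is the reformulation in (2).

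There is no real obstacle. The only subtle points are the coprimality and UFD argument that lets the three (squared) factors be tested independently, and the dehomogenisation step above. For small $r$ (e.g. $r<6$ in part (2)) the conditions force $S=0$, matching $V_r^{(2)}=0$. This is automatic from the divisibility argument, since a nonzero polynomial of degree $<6$ cannot be divisible by $\theta^2$.
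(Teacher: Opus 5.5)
Your argument is correct and complete. You test divisibility of $S$ separately by the pairwise coprime factors $X$, $Y$, $X+Y$ of $\theta$ (squared for part (2)) and use the double-root criterion $f(1)=f'(1)=0$ for $f(y)=S(1,y)$; this yields exactly the stated conditions, and your treatment of dehomogenisation and of small $r$ is sound. The paper gives no proof of this lemma (it calls it elementary and leaves it to the reader), and your argument is the natural elementary one to supply.
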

The proof of the above lemma is elementary and is left to the reader. Let $X_{r-m}:=\langle X^{r-m}Y^{m}  \rangle_K\subseteq V_r$ for $0\leq m \leq r$. With this notation, we now recall Remark $4.4$  from \cite{[BG09]} for a general prime $p$ and state the version specific for $p=2$.

\begin{lemma}\label{RemarkBG09}
     If $v(a_2)<n\in\N$, 
     then $\mathcal{I} (V_r^{(n)})\subseteq\ker P$.\\
     Further if $0\leq m< v(a_2)$ and $r\geq 3m+2$, then $\mathcal{I}( X_{r-m})\subseteq\ker P$.
\end{lemma}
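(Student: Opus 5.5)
The plan is to follow the argument of Remark 4.4 of \cite{[BG09]}, adapted to $p=2$. The idea is to find, for each element in question, a function $f\in\mathcal{I}(\Sym^r\bar\Q_2^2)$ such that $(T-a_2)f$ is integral and reduces to that element. Then $\frac{1}{a_2}(T-a_2)f\cdot(\text{suitable scalar})$ lies in $(T-a_2)(\mathcal{I}(\Sym^r\bar\Q_2^2))\cap\mathcal{I}(\Sym^r\bar\Z_2^2)$, so its image vanishes in $\bar\Theta$. Concretely, take $f=[g,v]$ with $v\in\Sym^r\bar\Z_2^2$ integral. Then
\[
\tfrac{-1}{a_2}(T-a_2)[g,v]=[g,v]-\tfrac{1}{a_2}T[g,v].
\]
So it suffices to show that $T[g,v]$ has all coefficients divisible by an element of valuation strictly greater than $v(a_2)$. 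In that case $P([g,\bar v])=0$, and since $\ker P$ is $G$-stable and the $[g,\bar v]$ generate, we get the inclusion.

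For the first claim, I will take $v=\theta^n w$ with $\theta=X^2Y-XY^2$ lifted to $\Z_2[X,Y]$ and $w\in\Sym^{r-3n}\Z_2^2$, so that $\bar v$ runs over $V_r^{(n)}$. By the formula for $T$, each term substitutes either $(X,2Y-\mu X)$ with $\mu\in\{0,1\}$ (the Teichmüller lifts for $p=2$), or $(2X,Y)$. Under $(X,Y)\mapsto(2X,Y)$ we get
\[
\theta(2X,Y)=4X^2Y-2XY^2,
\]
which is divisible by $2$. Under $(X,Y)\mapsto(X,2Y-\mu X)$, we get $\theta=XY'(X-Y')$ with $Y'=2Y-\mu X$. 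For $\mu=0$ the factor $Y'=2Y$ gives divisibility by $2$. For $\mu=1$, $X-Y'=2X-2Y$ gives divisibility by $2$. Hence $\theta^n$ contributes divisibility by $2^n$ in every term, so $T[g,\theta^nw]\in 2^n\mathcal{I}(\Sym^r\bar\Z_2^2)$. Because $v(a_2)<n$, the element $\frac{1}{a_2}T[g,v]$ is integral with zero reduction. The first claim follows.

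For the second claim, I will use $v=X^{r-m}Y^m$. The $\mu=0$ term gives $2^mX^{r-m}Y^m$, of valuation $m$, and this is too small; this is the main obstacle, exactly as in \cite{[BG09]}. The $(2X,Y)$ term gives $2^{r-m}$, which is fine since $r-m>m\geq$ enough. The $\mu=1$ term $X^{r-m}(2Y-X)^m$ has a unit coefficient and is not divisible at all. So a single $[g,v]$ does not suffice, and I will instead use the standard trick of \cite{[BG09]}. I will pick
\[
f=[g,X^{r-m}Y^m]+\text{correction},
\]
where the correction is a combination of $[g\beta,\cdot]$ with $\beta=\left(\begin{smallmatrix}2&\mu\\0&1\end{smallmatrix}\right)$ and polynomial multiples of $\frac{1}{a_2}$, chosen so that $(T-a_2)f$ becomes a combination of $[g,\,c\,X^{r-m}Y^m]$ with unit $c$ plus terms of positive valuation, modulo $V_r^{(m+1)}$-type pieces. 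Equivalently, I will use the first claim with $n=m+1>v(a_2)$ to discard everything in $\mathcal{I}(V_r^{(m+1)})$. It then suffices to show that $X^{r-m}Y^m$ is congruent modulo $V_r^{(m+1)}$ to something killed by the computation above. Here the hypothesis $r\ge 3m+2$ is what guarantees, via the criteria of Lemma \ref{Vr*} (generalized to level $m+1$: vanishing of the first and last $m+1$ coefficients and of the sums $\sum\binom{j}{i}a_j$ for $i\le m$), that polynomials like $X^{r-m}Y^m-X^{r-m'}Y^{m'}$ adjustments exist. I would carry out this step by induction on $m$, starting with the case $m=0$, where $X^r$ is handled directly by the $\mu=0$ and $(2X,Y)$ terms together with $\theta$-divisibility. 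I expect this inductive bookkeeping to be the hard part.
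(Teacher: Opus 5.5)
Your proof of the first claim is correct and is the standard argument behind the cited Remark 4.4 of [BG09]. Both $\theta(2X,Y)$ and $\theta(X,2Y-\mu X)$, for $\mu\in\{0,1\}$, are divisible by $2$. Hence $T[g,\theta^n w]\in 2^n\mathcal{I}(\Sym^r\bar\Z_2^2)$, and $(T-a_2)[g,\theta^nw/a_2]\equiv-[g,\theta^nw]\pmod{\wp}$ because $v(a_2)<n$.

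The second claim, however, has a genuine gap. You propose to discard $\mathcal{I}(V_r^{(m+1)})$ by the first claim with ``$n=m+1>v(a_2)$''. But the only hypothesis is $m<v(a_2)$, and nothing bounds $v(a_2)$ from above. This fails exactly where the paper uses the lemma in \S4. There $m=0$ and $v(a_2)=1$, and $\mathcal{I}(V_r^{(1)})$ is \emph{not} contained in $\ker P$: for $r\ge5$ the nonzero $\bar\Theta$ is the image of $\mathcal{I}(V_r^{(1)}/V_r^{(2)})$.

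More basically, dividing by $a_2$ is the wrong normalization for a hypothesis that bounds $v(a_2)$ from below. It requires $T[g,v]$ to be divisible beyond $v(a_2)$, which gets harder as $v(a_2)$ grows, while the statement gets easier. Your base case already fails: $T[g,X^r]$ has unit coefficients in both $T^+$ branches, so $\frac1{a_2}T[g,X^r]$ is not integral, and no $\theta$-divisibility rescues it. The ``correction'' and the induction on $m$ are never specified, so no argument for the second claim remains.

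The missing idea is to divide by $2^m$ instead of $a_2$. Then $v(a_2)>m$ kills the term $-a_2f$, and the unit output comes from the $T^-$ branch.

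Take $Q=\sum_ja_jX^{r-j}Y^j\in\Z[X,Y]$ with $a_{r-m}=1$, and $a_j=0$ for $j\le m$ and for $j>r-m$. Then automatically $Q(X,2Y)\equiv0$ and $Q(2X,Y)\equiv2^mX^mY^{r-m}\pmod{2^{m+1}}$. The remaining condition $Q(X,2Y-X)\equiv0\pmod{2^{m+1}}$ amounts to the $m+1$ congruences $\sum_j(-1)^j\binom{j}{i}a_j\equiv0\pmod{2^{m+1-i}}$ for $0\le i\le m$. These are conditions on the $r-2m-1$ free coefficients $a_{m+1},\dots,a_{r-m-1}$.

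Up to signs, the coefficient matrix on $m+1$ consecutive indices $j_0,\dots,j_0+m$ is $\bigl(\binom{j_0+k}{i}\bigr)_{0\le i,k\le m}$, which has determinant $1$. So the system is solvable as soon as $r-2m-1\ge m+1$, i.e.\ $r\ge 3m+2$; this is where that hypothesis enters. For $m=0$ one can simply take $Q=F=Y^r-X^{r-1}Y$, cf.\ Lemma \ref{F}. Then
\[
(T-a_2)\bigl[g,2^{-m}Q\bigr]\equiv\Bigl[g\left(\begin{smallmatrix}1&0\\0&2\end{smallmatrix}\right),\,X^mY^{r-m}\Bigr]\pmod{\wp}.
\]
Since $X^mY^{r-m}=w\cdot X^{r-m}Y^m$ generates $X_{r-m}$, this gives $\mathcal{I}(X_{r-m})\subseteq\ker P$ directly, without using the first claim at all.
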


We now state a proposition which gives the structure of $V_{r}/V_{r}^{(1)}$.

\begin{prop}
    \label{VrtoV0}
For $r\geq 2$,  we have a split short exact sequence of $\bar{\F}_{2}[\Gamma]$-modules:
\begin{equation}\label{es0}
0\rightarrow V_1\rightarrow\frac{V_r}{V_r^{(1)}}\rightarrow V_0\rightarrow 0,
\end{equation}
 where the injection is given by 
             $ X \longmapsto \overline{X^{r}},\,Y \longmapsto \overline{Y^{r}}$ and the surjection is given by $\underset{0\leq j\leq r}\sum c_j \overline{X^{r-j}Y^j}\longmapsto\underset{1\leq j\leq r-1}\sum c_j\in\bar\F_2.$
\end{prop}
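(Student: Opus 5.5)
Using Lemma \ref{Vr*}(1), the quotient $V_r/V_r^{(1)}$ is identified with the quotient of $\bar\F_2^{r+1}$ (coefficient vectors $(a_0,\dots,a_r)$) by the subspace cut out by $a_0=a_r=0$ and $\sum_j a_j=0$. Since $V_r^{(1)}=\theta V_{r-3}$ has dimension $r-2$ for $r\geq 3$, and is $0$ for $r=2$, the quotient has dimension $3$ in all cases $r\geq 2$. The three linear functionals
\[
S\mapsto a_0,\qquad S\mapsto a_r,\qquad S\mapsto \sum_{1\leq j\leq r-1}a_j
\]
vanish exactly on $V_r^{(1)}$ (note $\sum_j a_j=0$ together with $a_0=a_r=0$ is equivalent to the vanishing of the middle sum). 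Hence these functionals give a vector space isomorphism $V_r/V_r^{(1)}\cong\bar\F_2^3$. I will use this to show that the stated maps are well defined and fit into an exact sequence. The map $X\mapsto\overline{X^r}$, $Y\mapsto\overline{Y^r}$ is injective, since $X^r,Y^r$ have coordinates $(1,0,0),(0,1,0)$. The surjection to $V_0$ is the third functional, which kills $X^r$ and $Y^r$, and it is onto because $X^{r-1}Y$ maps to $1$. So the sequence is exact as vector spaces.

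Next comes $\Gamma$-equivariance. Since $\Gamma=\GL_2(\F_2)$ is generated by $w=\left(\begin{smallmatrix}0&1\\1&0\end{smallmatrix}\right)$ and $u=\left(\begin{smallmatrix}1&1\\0&1\end{smallmatrix}\right)$, it suffices to check these two elements. For the injection, $w$ swaps $X^r$ and $Y^r$, matching its action on $V_1$. For $u$, we have $u\cdot X^r=X^r$ and $u\cdot Y^r=(X+Y)^r$, and I must check $(X+Y)^r\equiv X^r+Y^r \pmod{V_r^{(1)}}$. The difference $\sum_{1\leq j\leq r-1}\binom rj X^{r-j}Y^j$ has $a_0=a_r=0$ and total sum $2^r-2\equiv 0$, so it lies in $V_r^{(1)}$ by Lemma \ref{Vr*}. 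On $V_1$, $u$ sends $Y\mapsto X+Y$, so the two actions agree. For the surjection, I need the middle sum to be $\Gamma$-invariant modulo $2$. Under $w$ the coefficients $a_j$ and $a_{r-j}$ are swapped, so the middle sum is preserved. Under $u$ we get $S(X,Y)\mapsto S(X,X+Y)$; evaluating at $(1,1)$, the total sum $\sum_j a_j=S(1,1)$ goes to $S(1,0)=a_0$, and $a_r=S(0,1)$ goes to $S(0,1)=a_r$. Therefore the middle sum $S(1,1)-a_0-a_r$ goes to $a_0-a_0-a_r$, which is wrong. So I must recompute with the correct convention, $u\cdot f(X,Y)=f(X,X+Y)$. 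The new coefficients satisfy: new $a_0$ equals $S(1,1)$, new $a_r$ equals $a_r$, and new total equals $S(1,0)=a_0$ in characteristic $2$. So the new middle sum is $a_0+S(1,1)+a_r$, which equals the old middle sum $S(1,1)+a_0+a_r$. Good, the functional is $\Gamma$-invariant. The step I expect to need the most care is exactly this bookkeeping of the action conventions; I will work throughout with the evaluations $S(1,0)$, $S(0,1)$, $S(1,1)$, which $\Gamma$ permutes, because they are the values at the three nonzero vectors of $\F_2^2$.

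Finally, splitting. I want a $\Gamma$-fixed vector in the quotient mapping to $1\in V_0$. In coordinates $(S(1,0),S(0,1),S(1,1))$ the quotient is the permutation module on the three nonzero vectors of $\F_2^2$. Under this identification $V_1$ is the sum-zero hyperplane, the map to $V_0$ is the sum, and the element $(1,1,1)$ is fixed and has sum $3=1$. Concretely, I would take the class of $X^r+Y^r+X^{r-1}Y$, or any element with $a_0=a_r=1$ and odd total, and verify it is fixed. Its span is a $\Gamma$-stable complement to $V_1$, so the sequence splits.
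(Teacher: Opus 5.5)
Your proof is correct, but it takes a different route from the paper's.

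The paper imports the submodule $V_1\cong X_r/X_r^{(1)}\subseteq V_r/V_r^{(1)}$ from (4.5) of Glover. It gets the quotient $V_0$ by a dimension count, and then fixes the two explicit maps only up to scalars, using Borel-fixed lines and $K$-linearity. The splitting asserted in the statement is never argued there.

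You instead use Lemma \ref{Vr*}(1) to identify $V_r/V_r^{(1)}$, via $S\mapsto (S(1,0),S(0,1),S(1,1))$, with functions on the three nonzero vectors of $\F_2^2$. The group $\Gamma$ acts on these by permutation, since $(g\cdot S)(v)=S(vg)$ for row vectors $v$. This makes the argument elementary and self-contained:
\begin{itemize}
\item well-definedness and exactness are read off in coordinates;
\item the surjection is the sum of the three values, so it is visibly $\Gamma$-invariant;
\item the image of $V_1$ is the sum-zero hyperplane, because $\overline{X^r}$ and $\overline{Y^r}$ are just the restrictions of the linear forms $X$ and $Y$ (as $x^r=x$ on $\F_2$), so equivariance of the injection is automatic;
\item the splitting is immediate, because the constant function has sum $3=1\neq 0$.
\end{itemize}

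The paper's approach parallels the odd-$p$ literature and the filtration picture of Remark \ref{remVrVr1}. Yours supplies the splitting that the paper leaves unproved.

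One thing to fix in the write-up: delete the false start in your $u$-invariance check, where you used the old $a_0$ in place of the new one. State the permutation argument once; it replaces the generator-by-generator bookkeeping.
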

\begin{proof}
Let $X_r^{(1)}:=X_r\cap V_r^{(1)}$. We have $X_r/X_r^{(1)}= \langle \overline{X^{r}},\overline{Y^{r}}\rangle_{\bar{\F}_{2}}.$ We have $V_{1}\cong \frac{X_r}{X_r^{(1)}}$ is a submodule of $\frac{V_{r}}{V_{r}^{(1)}}$ by $(4.5)$ in \cite{Glover}. By dimension count, the quotient $\frac{V_{r}}{X_{r}+V_{r}^{(1)}}$ is one-dimensional and hence isomorphic to $V_{0}$. Thus the short exact sequence follows. The one-dimensional fixed subspaces of upper unipotents in $V_{1}$ and $\frac{X_{r}}{X_{r}^{(1)}}$ are spanned by $X$ and $\overline{X^{r}}$ respectively. By $K$-linearity, the elements  $X$ and $Y=w.X$ map to $\overline{X^{r}}$ and $\overline{Y^{r}}$ respectively up to a scalar. By exactness, both the elements $\overline{X^{r}},\overline{Y^{r}}\in \frac{V_{r}}{V_{r}^{(1)}}$ map to zero in $V_{0}$. We note that the space $V_r/V_r^{(1)}$ is spanned by the images of $X^{r},Y^{r}$ and $X^{r-1}Y$ which are linearly independent. Since the surjection map is non-zero, the image of $X^{r-1}Y$ must map to $1$ up to a scalar. Now the result follows as $ X^{r-j}Y^{j} \equiv X^{r-1}Y \mod V_{r}^{(1)}$  for $1\leq j\leq r-1$.  
\end{proof}
\begin{remark}\label{remVrVr1}
    Note that for all $n\geq 1$ and $r \geq 3n+2$, we have $V_{r}^{(n)}/V_{r}^{(n+1)}\cong
    V_{r-3n}/V_{r-3n}^{(1)}$ and therefore the structure of $V_{r}^{(n)}/V_{r}^{(n+1)}$ for any $n$ is also given by \eqref{es0} above. 
\end{remark}

\subsection{Analysis of the Hecke operator $T$}
Let $R$ be a $\Z_{2}$-algebra. The induction space $\mathcal{I}(\Sym^{r}R^2)$ can be identified with the space of finitely supported $\Sym^rR^2$- valued functions on the Bruhat-Tits tree of $G=GL_{2}(\Q_{2})$. This is known to be a tree of uniform valency $3$ with its vertices corresponding to  integral lattices in $\mathbb{Q}_2^2$ up to homothety. It follows that the vertices are in one to one correspondence with the cosets of $G/K\Q_{p}^{\times}$ for $p=2$.

Let us define $I_{0}=\{0\}$ and 
\begin{equation*}
I_{n}= \{\mu_{0}+\cdots+\mu_{n-2}2^{n-2}+ \mu_{n-1}2^{n-1}|\, \mu_{i} \in \{0,1\}\,\, \text{ for } 0\leq i\leq n\} \text{ for all } n\in \N.
\end{equation*}  For  $n\geq0$ and $\lambda \in I_{n}$, let $g_{n,\lambda}^{0} = \left(\begin{smallmatrix}
  2^{n} & \lambda\\
  0 & 1
\end{smallmatrix}\right)$ and $g_{n,\lambda}^{1} = \left(\begin{smallmatrix}
  1 & 0\\
  2\lambda & 2^{n+1}
\end{smallmatrix}\right).$
Then one has the coset decomposition 
$$ G = \bigsqcup_{n\geq 0,\lambda\in I_{n}}K\Q_{2}^{\times}(g_{n,\lambda}^{0})^{-1} \bigsqcup \bigsqcup_{n\geq 0,\lambda\in I_{n}}K\Q_{2}^{\times}(g_{n,\lambda}^{1})^{-1}.$$ We label the  vertices of the Bruhat-Tits tree by these coset representatives $g_{n,\lambda}^i$ (figure \ref{tree}).
 The vertices $g_{0,0}^{1}$ and $g_{0,0}^{0}$ together constitute the `center of the tree'. As seen in figure \ref{tree}, the vertices $g_{n,\lambda}^{0}$'s lie in the `right half of the tree' and $g_{n,\lambda}^{1}$'s lie in the `left half of the tree'. Given an arbitrary $g \in G$, we say that $g$ is in the right (or left) half of the tree  if $g^{-1}g_{n,\lambda}^{0}\, (\text{or } g^{-1}g_{n,\lambda}^{1}$) belongs to the subgroup $ K\Q_{2}^{\times}$ for some $n$ and $\lambda$.
On identifying elements of $\mathcal{I}(\Sym^rR^2)$ with certain functions on the Bruhat-Tits tree, the formula for $T$ can be broken down into two parts, $T^+$ and $T^-$. Basically $T^+$ is the part of $T$ that takes  support of an elementary function $[g,v]$ away from the center of the tree, and $T^-$ is the  part that takes the support closer to the center. 

For $P(X,Y)\in\Sym^rR^2$,  if $g$ lies in the right half of the tree then we have 
\begin{equation}
\begin{split}
     T^+[g,\enspace P(X,Y)]&=[gg_{1,0}^{0},\enspace P(X,2Y)]+[g g_{1,1}^{0},  \enspace P(X,2Y-X)] \\
    T^-[g,\enspace P(X,Y)]&=[gg_{0,0}^{1},\enspace P(2X,Y)],  
\end{split}
    \label{Tright}
\end{equation}    
and  for $g$ in the left half of the tree we have
   \begin{equation}
   \begin{split}
    T^{+}[g, \enspace P(X,Y)]&=
    [gg_{0,0}^{1},\enspace  P(2X,Y)]+[gg_{1,1}^{0},\enspace P(X,2Y-X)]\\
    T^-[g,\enspace P(X,Y)]&= [gg_{1,0}^{0},\enspace P(X,2Y)]. 
   \end{split}
   \label{Tleft}
   \end{equation}

\begin{figure}[h!]
\begin{center}
    \begin{tikzpicture}
        \draw[black] (0, 0) -- (2, 2);
        \draw[black] (0, 0) -- (2, -2);
        \draw[black] (0, 0) -- (-2.83, 0);
        \draw[black] (2, 2) -- (3.5, 2);
        \draw[black] (2, 2) -- (1.25, 3.3);
        \draw[black] (2,-2) -- (3.5, -2);
        \draw[black] (2, -2) -- (1.25,-3.3);
        \draw[black] (-2.83, 0) -- (-3.58, -1.3);
        \draw[black] (-2.83,0) -- (-3.58, 1.3);
        \draw[black] (3.5,2) -- (4,2.86);
        \draw[black] (3.5,2) -- (4,1.14);
        \draw[black] (1.25,3.3) -- (1.75,4.16);
        \draw[black] (1.25,3.3) -- (0.39,3.3);
        \draw[black] (3.5,-2) -- (4,-1.14);
        \draw[black] (3.5,-2) -- (4,-2.86);
        \draw[black] (1.25,-3.3) -- (1.75,-4.16);
        \draw[black] (1.25,-3.3) -- (0.39,-3.3);
        \draw[black] (-3.58,1.3) -- (-3.08,2.16);
        \draw[black] (-3.58, 1.3) -- (-4.58,1.3) ;
        \draw[black] (-3.58, -1.3) -- (-3.08, -2.16);
        \draw[black] (-3.58, -1.3) -- (-4.58, -1.3);
        \filldraw[black] (0, 0) circle node[anchor=west]{\>\> $g_{0,0}^{0}$};
        \filldraw[black] (2, 2) circle node[anchor= north]{\!\!\!\! $g_{1,0}^{0}$};
        \filldraw[black] (2, -2) circle node[anchor=east]{\!\!\!\! $g_{1,1}^{0}$};
        \filldraw[black] (-2.83, 0) circle node[anchor=east]{\!\!\!\! $g_{0,0}^{1}$};
        \filldraw[black] (3.5,2) circle node[anchor= north]{\!\!\!\! $g_{2,2}^{0}$};
        \filldraw[black] (1.25,3.3) circle node[anchor= north]{\!\!\!\! $g_{2,0}^{0}$};
        \filldraw[black] (3.5,-2) circle node[anchor= north]{\!\!\!\! $g_{2,1}^{0}$};
        \filldraw[black] (1.25,-3.3) circle node[anchor= north]{\!\!\!\! $g_{2,3}^{0}$};
        \filldraw[black] (-3.58,1.3) circle node[anchor= north]{\!\!\!\! $g_{1,1}^{1}$};
        \filldraw[black] (-3.58,-1.3) circle node[anchor= north]{\!\!\!\! $g_{1,0}^{1}$};
        \draw[dashed] (-1.415,5) -- (-1.415,-5);
         \node at (0.5,4.75) {right side};
         \node at (-3,4.8){left side };
    \end{tikzpicture}
    
    \caption{ Bruhat-Tits tree for $p = 2$.}
    \label{tree}     
    \end{center}
      
  \end{figure}
  The vertex $g_{n,\lambda}^i$ is said to be a point of radius $n$ or $-(n+1)$, depending on whether $i=0$ or $1$, indicating its distance from the vertex labeled by the identity matrix. For example, $g_{0,0}^{0}$ has radius $0$, $g_{1,0}^{0}$ and $g_{1,1}^{0}$ has radius 1 and $g_{0,0}^{1}$ has radius $-1$.
%
%

If $\lambda=\mu_{0}+ \mu_{1}2+\cdots + \mu_{n-1}2^{n-1} \in I_{n}$, then the truncation map $[.]_{n-1}: I_{n} \to I_{n-1}$ is defined as follows: $$[\lambda]_{n-1} := \begin{cases}
\mu_{0}+ \mu_{1}2+\cdots + \mu_{n-2}2^{n-2} & \text{ if } n\geq 2,\\
0 & \text{ if } n=1.
\end{cases}$$
We note that $[\lambda]_{n-1} = \lambda$ if $\lambda \in I_{n-1}$ and $\lambda-2^{n-1}$ if $\lambda\in I_{n}\setminus I_{n-1}$.
Since $g_{n,\lambda}^{i}$'s give the coset representatives of $G/K\Q_{2}^{\times}$, any element in $\mathcal{I}(\Sym^{r}R^2)$ is a linear combination of elementary functions of the form $\left[g_{n,\lambda}^{i}, v\right]$ where $v$ is an arbitrary element in $\Sym^{r}R^2$ and $i \in \{0,1\}$. We will now state a lemma giving the explicit action of $T^{+}$ and $T^{-}$ on these elementary functions.
\begin{lemma}\label{T}
 Let $P(X,Y) \in \Sym^{r}R^{2}$ where $R$ is a $\mathbb{Z}_{2}$-algebra. Let $\lambda \in I_{n},\,n\geq 0$. 
 Then the action of $T=T^+ + T^-$ on $\mathcal{I}(\Sym^{r}(R^{2}))$ can be explicitly stated as follows:
\begin{eqnarray*}
T^+\left[g_{n,\lambda}^0,\enspace P(X,Y)\right]& = &\left[g_{n+1,\lambda}^{0},\enspace P(X,2Y)\right] +  \left[g_{n+1, \lambda+2^n}^{0},\enspace  P(X,2Y-X)\right] \,\forall \,n\geq0,\\
T^-\left[g_{n,\lambda}^0,\enspace P(X,Y)\right] &= & \begin{cases} \left[g_{n-1,\lambda}^{0}, \enspace P(2X,Y)\right]& \text{ if } \lambda \in I_{n-1}\\
\left[g_{n-1,[\lambda]_{n-1}}^{0},\enspace P(2X, X+Y)\right]& \text{ if } \lambda \in I_{n}\setminus I_{n-1}
\end{cases}\forall\, n\geq1,\\
T^+\left[g_{n,\lambda}^1,\enspace P(X,Y)\right]& = &\left[g_{n+1,\lambda}^{1},\enspace P(2X,Y)\right] +  \left[g_{n+1, \lambda+2^n}^{1},\enspace P(2X-Y,Y)\right] \,\forall\,n\geq0,\\
T^-\left[g_{n,\lambda}^1,\enspace P(X,Y)\right] &= & \begin{cases} \left[g_{n-1,\lambda}^{1},\enspace P(X,2Y)\right]& \text{ if } \lambda \in I_{n-1}\\
\left[g_{n-1,[\lambda]_{n-1}}^{1},\enspace P(X+Y, 2Y)\right]& \text{ if } \lambda \in I_{n}\setminus I_{n-1}
\end{cases}\, \forall\, n\geq1,\\
T^-\left[g_{0,0}^0,\enspace P(X,Y)\right] &= & \left[g_{0,0}^{1},\enspace P(2X,Y)\right],\\
T^-\left[g_{0,0}^1,\enspace P(X,Y)\right] &= & \left[g_{0,0}^{0},\enspace  P(X,2Y)\right].
\end{eqnarray*}
\end{lemma}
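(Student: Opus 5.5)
The plan is to derive Lemma \ref{T} directly from the global formulas \eqref{Tright} and \eqref{Tleft}. The only real work is to rewrite each product $g^i_{n,\lambda}h$, with $h\in\{g^0_{1,0},g^0_{1,1},g^1_{0,0}\}$, as a standard representative $g^{i'}_{n',\lambda'}$ times an element $\kappa\in K\Q_2^\times$. One then uses the identity $[g\kappa,v]=[g,\kappa\cdot v]$, which follows from the definition of $[g,v]$: $[g\kappa,v](g')=(g'g\kappa)\cdot v$ on the coset $K\Q_2^\times\kappa^{-1}g^{-1}=K\Q_2^\times g^{-1}$. Thus each term becomes an elementary function at a labelled vertex, with the polynomial twisted by the action of $\kappa$. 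Since $g^0_{n,\lambda}$ lies in the right half and $g^1_{n,\lambda}$ in the left half by definition, I will use \eqref{Tright} for the $i=0$ vertices and \eqref{Tleft} for the $i=1$ vertices.

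\textbf{Right half, $T^+$.} I will compute $g^0_{n,\lambda}g^0_{1,\mu}=\left(\begin{smallmatrix}2^{n+1}&\lambda+2^n\mu\\0&1\end{smallmatrix}\right)=g^0_{n+1,\lambda+2^n\mu}$. This is an exact equality with no $\kappa$, so $T^+$ is read off immediately from \eqref{Tright}.

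\textbf{Right half, $T^-$.} Here $g^0_{n,\lambda}g^1_{0,0}=\left(\begin{smallmatrix}2^n&2\lambda\\0&2\end{smallmatrix}\right)$. Modulo the centre this is $\left(\begin{smallmatrix}2^{n-1}&\lambda\\0&1\end{smallmatrix}\right)$.
\begin{itemize}
\item If $\lambda\in I_{n-1}$, this is exactly $g^0_{n-1,\lambda}$, and the polynomial $P(2X,Y)$ is unchanged.
\item If $\lambda=[\lambda]_{n-1}+2^{n-1}$, then $\left(\begin{smallmatrix}2^{n-1}&\lambda\\0&1\end{smallmatrix}\right)=g^0_{n-1,[\lambda]_{n-1}}\left(\begin{smallmatrix}1&1\\0&1\end{smallmatrix}\right)$. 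The unipotent acts by $f(X,Y)\mapsto f(X,X+Y)$, so $P(2X,Y)$ becomes $P(2X,X+Y)$.
\end{itemize}
For $n=0$ the product gives $\left(\begin{smallmatrix}1&0\\0&2\end{smallmatrix}\right)$. Modulo the centre this is $\left(\begin{smallmatrix}2^{-1}&0\\0&1\end{smallmatrix}\right)$, which is not of the form $g^0$, so I will identify it with $g^1_{0,0}$ itself. This yields $[g^1_{0,0},P(2X,Y)]$.

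\textbf{Left half.} I will do the same with \eqref{Tleft}. First, $g^1_{n,\lambda}g^1_{0,0}=\left(\begin{smallmatrix}1&0\\2\lambda&2^{n+2}\end{smallmatrix}\right)=g^1_{n+1,\lambda}$, which gives the first $T^+$ term with $P(2X,Y)$. Second, $g^1_{n,\lambda}g^0_{1,1}=\left(\begin{smallmatrix}2&1\\4\lambda&4\lambda+2^{n+1}\end{smallmatrix}\right)$, which I must match with $g^1_{n+1,\lambda+2^n}\kappa$. Solving, $\kappa=\left(\begin{smallmatrix}2&1\\-1&0\end{smallmatrix}\right)\in K$ up to a scalar. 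Its action sends $f(X,Y)\mapsto f(2X-Y,X)$. Applied to $P(X,2Y-X)$ it should produce $P(2X-Y,Y)$; I will verify that the variable bookkeeping works out, possibly after composing with a scalar. The $T^-$ term $g^1_{n,\lambda}g^0_{1,0}$ gives $\left(\begin{smallmatrix}2&0\\4\lambda&2^{n+1}\end{smallmatrix}\right)\sim\left(\begin{smallmatrix}1&0\\2\lambda&2^n\end{smallmatrix}\right)$. When $\lambda\in I_{n-1}$ this is $g^1_{n-1,\lambda}$. Otherwise I split off a lower unipotent $\left(\begin{smallmatrix}1&0\\1&1\end{smallmatrix}\right)$, acting by $f\mapsto f(X+Y,Y)$, which gives $P(X+Y,2Y)$. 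The case $n=0$ returns to $g^0_{0,0}$.

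\textbf{Main obstacle.} The main difficulty is purely bookkeeping: getting the left-half $T^+$ second term right, including the exact $\kappa$ and its action convention $f(aX+cY,bX+dY)$. I will check that term carefully, and also confirm that no extra unit scalar survives, since scalars act trivially only through the central $\Q_2^\times$.
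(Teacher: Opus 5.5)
Your proposal is correct and follows the paper's proof essentially step for step. It uses the same matrix identities, including $g^1_{n,\lambda}g^0_{1,1}=g^1_{n+1,\lambda+2^n}\left(\begin{smallmatrix}2&1\\-1&0\end{smallmatrix}\right)$ and the unipotent splittings for $\lambda\in I_n\setminus I_{n-1}$, combined with $[g\kappa,v]=[g,\kappa\cdot v]$.

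Two small fixes. First, the $(2,2)$ entry of $g^1_{n,\lambda}g^0_{1,1}$ is $2\lambda+2^{n+1}$, not $4\lambda+2^{n+1}$; with your entry, $\kappa$ would not lie in $K$. Second, the check you deferred closes with no scalar: for $f=P(X,2Y-X)$ one gets $\kappa\cdot f=f(2X-Y,X)=P(2X-Y,Y)$ exactly.
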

\begin{proof}
We use the description of $T^{+}$ and $T^{-}$ given in \eqref{Tright} and \eqref{Tleft} with $g= g_{n,\lambda}^{i}$ with $\lambda\in I_n$, $n\geq 0$ and for $i=0$ or $1$. The last two formulae follow directly, as we know that $g_{0,0}^0\cdot g_{0,0}^1=g_{0,0}^1$ and $g_{0,0}^1\cdot g_{1,0}^0=g_{0,0}^0$. For the rest, we check that
\begin{eqnarray*}
g_{n,\lambda}^{0}\cdot g_{1,0}^{0} &=& g_{n+1,\lambda}^{0} \text{ and } \\
g_{n,\lambda}^{0}\cdot g_{1,1}^{0} &=& g_{n+1,\,\lambda+2^{n}}^{0} \text{ for } n\geq0,  \\
g_{n,\lambda}^{0} \cdot g_{0,0}^{1} &=& \left(\begin{smallmatrix}
        2^{n} & 2\lambda\\ 
        0 & 2
    \end{smallmatrix}\right)
   =\begin{cases}
    2\cdot g_{n-1,\lambda}^0  & \text{ if } \lambda \in I_{n-1},\\
 2\cdot g_{n-1, [\lambda]_{n-1}}^{0} \cdot \left(\begin{smallmatrix}
        1&1\\
        0&1
    \end{smallmatrix}\right) & \text{ if } \lambda \in I_{n}\setminus I_{n-1}
\end{cases} \text{ for } n\geq1.
  \end{eqnarray*}    
 Now applying \eqref{Tright} with the matrix identities above, we have
 \begin{eqnarray*}
T^{+}\left[g_{n,\lambda}^{0}, \enspace P(X,Y)\right] 
&=& \left[g_{n+1,\,\lambda}^{0},\enspace P(X,2Y)\right] +  \left[g_{n+1,\, \lambda+ 2^n}^{0},\enspace P(X,2Y-X)\right] \text{ for } n\geq0.\\
T^{-}\left[g_{n,\lambda}^{0},\enspace  P(X,Y)\right]&=& 
\begin{cases}
        \left[2\cdot g_{n-1,\lambda}^{0}, \enspace  P(2X,Y) \right] &\text{ if } \lambda \in I_{n-1},\\
        \left[2\cdot g_{n-1,[\lambda]_{n-1}}^{0}\cdot\left(\begin{smallmatrix}
        1&1\\
        0&1
\end{smallmatrix}\right), \enspace P(2X,Y) \right] &\text{ if } \lambda \in I_{n}\setminus I_{n-1}.
\end{cases} \\
\\
&=& 
\begin{cases}
        \left[ g_{n-1,\lambda}^{0}, \enspace  P(2X,Y) \right] &\text{ if } \lambda \in I_{n-1},\\
        \left[g_{n-1,[\lambda]_{n-1}}^{0}, \enspace P(2X,X+Y) \right] &\text{ if } \lambda \in I_{n}\setminus I_{n-1}
\end{cases} \text{ for } n\geq1.    \end{eqnarray*}
In the last step we use that $2\in\Q_2^\times$ acts trivially on $\Sym^{r}(R^{2})$, the matrix $\left(\begin{smallmatrix}
        1&1\\
        0&1
    \end{smallmatrix} \right)\in K$ takes $ P(2X,Y)$ to $P(2X,X+Y)$.
    
Similarly on the left side of the tree, we have 
 \begin{eqnarray*}
g_{n,\lambda}^{1} \cdot g_{1,0}^{0} 
=\left(\begin{smallmatrix}
        2 & 0\\ 
        2^{2}\lambda & 2^{n+1}
\end{smallmatrix}\right)
   &=& \begin{cases}
    2\cdot g_{n-1,\,\lambda}^1  & \text{ if } \lambda \in I_{n-1},\\
  2\cdot g_{n-1,\, [\lambda]_{n-1}}^{1}\cdot \left(\begin{smallmatrix}
        1&0\\
        1&1
\end{smallmatrix}\right) & \text{ if } \lambda \in I_{n}\setminus I_{n-1}
    \end{cases} \text{ for } n\geq1, \\
g_{n,\lambda}^{1}\cdot g_{1,1}^{0}= \left(\begin{smallmatrix}
    2 & 1\\
   2^2\lambda & \:\:2\lambda+2^{n+1}
\end{smallmatrix}\right)
&=& g_{n+1,\,\lambda+2^{n}}^{1}\cdot\left(\begin{smallmatrix}
2 & 1\\
-1 & 0
\end{smallmatrix}\right) \text{ and} \\
g_{n,\lambda}^{1}\cdot   g_{0,0}^{1}&= &g_{n+1,\,\lambda}^{1} \text{ for } n\geq0.
\end{eqnarray*}
Hence by formula \eqref{Tleft} and the matrix identities above we obtain
\begin{eqnarray*}
T^{+}\left[g_{n,\lambda}^{1},\enspace  P(X,Y)\right] &=& \left[g_{n+1,\,\lambda}^{1},\enspace P(2X,Y)\right] +  \left[g_{n+1,\, \lambda+2^n}^{1},
\,\left(\begin{smallmatrix}
    2&1\\
    -1&0
\end{smallmatrix}\right)\cdot P(X,2Y-X)\right]\\
&=&\left[g_{n+1,\,\lambda}^{1},\enspace P(2X,Y)\right] +  \left[g_{n+1, \,\lambda+2^n}^{1},\enspace P(2X-Y,Y)\right] \text{ for } n\geq 0,
\end{eqnarray*}
since the matrix $\left(\begin{smallmatrix}
2 & 1\\
-1 & 0
\end{smallmatrix}\right)\in K$ takes $P(X,2Y-X)$ to $P(2X-Y,Y)$. 
Further we have
\begin{eqnarray*}
T^{-}\left[g_{n,\lambda}^{1},\enspace P(X,Y)\right] &=&\begin{cases}
        \left[2 \cdot g_{n-1,\lambda}^{1}, \enspace  P(X,2Y) \right] &\text{ if } \lambda \in I_{n-1},\\
        \left[[2 \cdot g_{n-1,\,[\lambda]_{n-1}}^{1},\,\left(\begin{smallmatrix}
        1&0\\
        1&1
\end{smallmatrix}\right)\cdot  P(X,2Y) \right] &\text{ if } \lambda \in I_{n}\setminus I_{n-1},
    \end{cases} \\
    &=& \begin{cases}
        \left[ g_{n-1,\,\lambda}^{1}, \enspace  P(X,2Y) \right] &\text{ if } \lambda \in I_{n-1},\\
         \left[g_{n-1,\,[\lambda]_{n-1}}^{1}, \enspace P(X+Y,2Y) \right] &\text{ if } \lambda \in I_{n}\setminus I_{n-1}
    \end{cases} \text{ for } n\geq1.
 \end{eqnarray*}
 
The last step follows since $2 \in \Q_{2}^{\times}$ acts trivially on $\Sym^{r}(R^{2})$ and the matrix $\left(\begin{smallmatrix}
        1&0\\
        1&1
\end{smallmatrix}\right)\in K$ takes $P(X,2Y)$ to $P(X+Y,2Y)$.
\end{proof}

The following lemma is to be used while finding the image of the Hecke operator by transferring the computation from the left side of the tree to the right side of the same.
\begin{lemma}\label{T+toT-}
    Let $n \in \N \cup \{0\}$ and $\lambda \in I_{n}$, $w=\left(\begin{smallmatrix}
        0 & 1\\
        1 & 0
    \end{smallmatrix}\right)$ and let $P(X,Y)\in \Sym^rR^2$ where $R$ is a $\Z_{2}$-algebra. Then we have:  \begin{eqnarray}T^+\left[g_{n,\lambda}^{1}, \enspace w \cdot P(X,Y)\right]&=&\left(\begin{smallmatrix}
    0 & 1\\
    1 & \lambda
\end{smallmatrix}\right)\cdot T^+\left[g_{n+1,\,\lambda}^{0},\enspace P(X,Y)\right]
   \label{T+toT-a}\\
   &=&\left(\begin{smallmatrix}
        0 & 1\\
        1 & \lambda -2^n
\end{smallmatrix}\right)\cdot T^+\left[g_{n+1,\, \lambda+2^n}^{0},\enspace P(X,Y)\right].\label{T+toT-b}\\
T^-\left[g_{n,\lambda}^1, \enspace w \cdot P(X,Y)\right] &=& \left(\begin{smallmatrix}
       0&1\\
       1&\lambda
   \end{smallmatrix}\right)\cdot T^-\left[g_{n+1,\,\lambda}^{0}, \enspace P(X,Y)\right]
   \label{T+toT-c}\\
   &=& \left(\begin{smallmatrix}
       0 & 1\\
       1 & \lambda-2^n
   \end{smallmatrix}\right)\cdot T^-\left[g_{n+1,\,\lambda+2^n}^{0},\enspace P(X,Y)\right]. \label{T+toT-d}
  \end{eqnarray} 
    \end{lemma}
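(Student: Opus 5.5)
The plan is to derive all four identities from the $G$-equivariance of $T$, together with the observation that the relevant matrices lie in $K$ and hence fix the vertex of radius $0$. The $T^\pm$ splitting will then be compatible with the group action.

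First I will verify the matrix identity
$$\left(\begin{smallmatrix} 0&1\\ 1&\lambda\end{smallmatrix}\right) g^0_{n+1,\lambda}=\left(\begin{smallmatrix} 0&1\\ 2^{n+1}&2\lambda\end{smallmatrix}\right)= g^1_{n,\lambda}\, w .$$
Similarly, for $h'=\left(\begin{smallmatrix} 0&1\\ 1&\lambda-2^n\end{smallmatrix}\right)$, I will check that $h' g^0_{n+1,\lambda+2^n}=\left(\begin{smallmatrix} 0&1\\ 2^{n+1}&2\lambda\end{smallmatrix}\right)$ as well. For an elementary function the definition gives $h\cdot[g,v]=[hg,v]$ and $[gk,v]=[g,k\cdot v]$ for $k\in K\Q_2^\times$. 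Since $w\in K$, this yields
$$h\cdot[g^0_{n+1,\lambda},P]=[g^1_{n,\lambda},w\cdot P]=h'\cdot[g^0_{n+1,\lambda+2^n},P].$$
Because $T$ is $G$-linear, applying $T$ gives the four identities with $T$ in place of $T^\pm$.

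Next I will separate $T^+$ and $T^-$. By \eqref{Tright}, \eqref{Tleft} and Lemma \ref{T}, $T[g,v]$ is a sum over the three neighbours of the vertex $g$. Here $T^-$ is the single term supported on the neighbour closer to the identity vertex $g^0_{0,0}$, and $T^+$ consists of the other two terms. This uses the radius convention: $g^1_{n,\lambda}$ and $g^0_{n+1,\lambda}$ both lie at distance $n+1$ from $g^0_{0,0}$, and for $n=0$ we have $T^-[g^1_{0,0},\cdot]$ supported on $g^0_{0,0}$. The matrices $h,h'$ lie in $K$, so as tree automorphisms they fix the vertex $g^0_{0,0}$ and preserve distances. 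Hence they send the neighbour of $g^0_{n+1,\lambda}$ closer to the origin to the neighbour of $g^1_{n,\lambda}$ closer to the origin, and likewise for the outer neighbours. Since the action of $h$ on a function supported at a vertex only changes the vertex via $g\mapsto hg$, it follows that $h\cdot T^-[g^0_{n+1,\lambda},P]=T^-[g^1_{n,\lambda},w\cdot P]$. Subtracting this from the identity for $T$ gives the $T^+$ version, and the same argument works for $h'$.

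The main obstacle is justifying that $T^\pm$, defined in the paper via the formulas for the right and left halves, really coincide with the "away from / towards the origin" description, uniformly in $n\ge 0$ including the central vertices. If this is unconvincing, the fallback is a direct computation. One computes $h\cdot$ of each term in Lemma \ref{T} for $g^0_{n+1,\lambda}$, rewrites $h g^0_{n+2,\lambda}$, $h g^0_{n+2,\lambda+2^{n+1}}$ and $h g^0_{n,[\lambda]_n}$ as $g^1_{\ast}k$ with explicit $k\in K$, and compares the results with the formulas for $g^1_{n,\lambda}$ in Lemma \ref{T}. This is routine matrix bookkeeping.
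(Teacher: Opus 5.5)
Your proof is correct, but it is organised differently from the paper's. The paper argues by direct bookkeeping, which is essentially your fallback. It writes matrix identities at the \emph{neighbouring} vertices, such as $\left(\begin{smallmatrix}0&1\\1&\lambda\end{smallmatrix}\right)g^0_{n+2,\lambda}=g^1_{n+1,\lambda}w$ and $\left(\begin{smallmatrix}0&1\\1&\lambda\end{smallmatrix}\right)g^0_{n,\lambda}=g^1_{n-1,\lambda}w$ (resp.\ $g^1_{n-1,[\lambda]_{n-1}}w\left(\begin{smallmatrix}1&1\\0&1\end{smallmatrix}\right)$ when $\lambda\in I_n\setminus I_{n-1}$). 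It then applies these to each term of Lemma \ref{T} for $g^0_{n+1,\lambda}$ and checks that the transformed polynomials match the formulas of Lemma \ref{T} for $g^1_{n,\lambda}$.

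You need only the single identity $hg^0_{n+1,\lambda}=g^1_{n,\lambda}w=h'g^0_{n+1,\lambda+2^n}$ at the vertex itself. $G$-linearity then gives the statement for $T$, and you recover the $T^-$ and $T^+$ parts by restricting to the spheres of radius $n$ and $n+2$ about $g^0_{0,0}$, which $h,h'\in K$ preserve.

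The obstacle you flag is harmless. By Lemma \ref{T}, every neighbour of a vertex of label $m$ has label $m\pm1$, so the radius labels are genuine graph distances from $g^0_{0,0}$. Consequently, at every vertex other than $g^0_{0,0}$ the $T^-$ term is the one on the inner neighbour. The exceptional vertex $g^0_{0,0}$, where all neighbours are farther out, never carries the input here, since $g^0_{n+1,\lambda}$ and $g^1_{n,\lambda}$ both lie at distance $n+1\geq 1$.

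Your route is shorter, and the case split $\lambda\in I_{n-1}$ versus $\lambda\in I_n\setminus I_{n-1}$ plays no role in it. It also gives a representative-free description of the splitting $T=T^++T^-$ away from $g^0_{0,0}$. The paper's route is purely mechanical, needing nothing beyond Lemma \ref{T} and matrix multiplication, with no appeal to the metric structure of the tree. It also makes explicit which neighbour is carried to which.
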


\begin{proof}
If $\lambda\in I_{n}$ for some $n\geq 0$, then we note that
\begin{equation}\label{matrixrel}
     \begin{psmallmatrix}
    0 & 1\\
    1 & \lambda
    \end{psmallmatrix}\cdot g_{n+2,\,\lambda}^{0}= g_{n+1,\,\lambda}^{1}\cdot w \text{ and } 
    \begin{psmallmatrix}
    0 & 1\\
    1 & \lambda
    \end{psmallmatrix}\cdot g_{n+2,\, \lambda + 2^{n+1}}^{0} = g_{n+1,\,\lambda+2^n}^{1}\cdot w .
    \end{equation}
    Using the formula for $T^+$ in Lemma \ref{T} and the matrix identities above, we get that
    \begin{eqnarray*}
     \begin{psmallmatrix}
    0 & 1\\
    1 & \lambda
\end{psmallmatrix}
\cdot T^+\left[g_{n+1,\lambda}^{0},\enspace P(X,Y)\right]&= & \left[g_{n+1,\lambda}^{1}\cdot w, \enspace P(X,2Y)\right] + \left[g_{n+1,\lambda+2^n}^{1}\cdot w,\enspace P(X,2Y-X)\right]\\
    &=& \left[g_{n+1,\lambda}^{1} ,\enspace P(Y,2X)\right] + \left[g_{n+1,\lambda+2^n}^{1},\enspace  P(Y,2X-Y)\right]\\
    &=& T^+\left[g_{n,\lambda}^{1},\enspace w\cdot P(X,Y)\right].   
    \end{eqnarray*}
    Now using Lemma \ref{T}, and  the matrix identities
    \begin{equation} \label{matrixrel2}
     \begin{psmallmatrix}
    0 & 1\\
    1 & \lambda-2^n
\end{psmallmatrix}\cdot g_{n+2,\,\lambda+2^n}^{0} = g_{n+1,\lambda}^{1}\cdot w \text{ and } 
    \begin{psmallmatrix}
    0 & 1\\
    1 & \lambda-2^n
\end{psmallmatrix} \cdot g_{n+2,\, \lambda+2^{n} +2^{n+1}}^{0} = g_{n+1,\,\lambda+2^n}^{1} \cdot w \hspace{1.5mm}\forall\, n \geq 0,
    \end{equation}
     we get that
    \begin{eqnarray*}
     \begin{psmallmatrix}
    0 & 1\\
    1 & \lambda-2^n
\end{psmallmatrix}
\cdot T^+\left[g_{n+1,\,\lambda+2^n}^{0},\enspace P(X,Y)\right]&= & \left[g_{n+1,\,\lambda}^{1}\cdot w, \enspace P(X,2Y)\right] + \left[g_{n+1,\,\lambda+2^n}^{1}\cdot w, \enspace P(X,2Y-X)\right]\\
    &=& \left[g_{n+1,\,\lambda}^{1} , \enspace P(Y,2X)\right] + \left[g_{n+1,\,\lambda +2^n}^{1}, \enspace P(Y,2X-Y)\right]\\
     &=& T^+\left[g_{n,\,\lambda}^{1}, \enspace  w\cdot P(X,Y)\right].   
    \end{eqnarray*}
To find the explicit expression for $T^{-}$, we first consider the matrix identity
\begin{equation*}
\begin{psmallmatrix}
    0 & 1\\
    1 & \lambda
\end{psmallmatrix} \cdot g_{n,\lambda}^{0} = \begin{cases}g_{n-1,\,\lambda}^{1}\cdot w &\text{ if } \lambda \in I_{n-1},\\
g_{n-1,\,[\lambda]_{n-1}}^{1}\cdot w \cdot \begin{psmallmatrix}
    1 & 1\\
    0 & 1
\end{psmallmatrix}& \text{ if } \lambda \in I_{n}\setminus I_{n-1}.
\end{cases}
\end{equation*}
Using the above identity and Lemma \ref{T} twice, we get that
\begin{eqnarray*}
\left(\begin{smallmatrix}
       0&1\\
       1&\lambda
   \end{smallmatrix}\right)\cdot T^-\left[g_{n+1,\,\lambda}^{0}, \enspace P(X,Y)\right]
&=& \begin{cases}
       \left[g_{n-1,\,\lambda}^{1},\enspace w \cdot P(2X,Y)  \right] & \text{ if } \lambda \in I_{n-1} \\
        \left[g_{n-1,\,[\lambda]_{n-1}}^{1}, \enspace w \cdot \begin{psmallmatrix}
    1 & 1\\
    0 & 1
\end{psmallmatrix}\cdot P(2X,Y)  \right] & \text{ if } \lambda \in I_{n}\setminus I_{n-1}    
\end{cases}\\
&=& \begin{cases}
       \left[g_{n-1,\,\lambda}^{1},\enspace P(2Y,X)  \right] & \text{ if } \lambda \in I_{n-1} \\
        \left[g_{n-1,\,[\lambda]_{n-1}}^{1}, \enspace  P(2Y, X+Y)  \right] & \text{ if } \lambda \in I_{n}\setminus I_{n-1}       
\end{cases}\\
& = & T^-\left[g_{n,\lambda}^1, \enspace P(Y,X)\right] =\, T^-\left[g_{n,\lambda}^1, \enspace w \cdot P(X,Y)\right]. 
\end{eqnarray*}
Similarly using Lemma \ref{T} and the matrix identities
\begin{equation*}
\begin{psmallmatrix}
    0 & 1\\
    1 & \lambda-2^n
\end{psmallmatrix} \cdot g_{n,\lambda}^{0} = \begin{cases}
g_{n-1,\lambda}^{1}\cdot w \cdot\begin{psmallmatrix}
    1 & -1\\
    0 & 1
\end{psmallmatrix} & \text{ if }  \lambda \in I_{n-1}\\
 g_{n-1,[\lambda]_{n-1}}^{1}\cdot w & \text{ if } \lambda \in I_{n}\setminus I_{n-1}
\end{cases}
\end{equation*}
 we get equation \eqref{T+toT-d}.
\end{proof}
 In the next part of the paper, we shall construct a number of functions with big supports spreading over both sides of the tree. Sometimes, we will have the functional value to be $P(X,Y)$ at some vertex in the right side and $w.P(X,Y)$ on a `compatible' vertex in the left side. In those cases we can apply the lemma above to compute the image of $T$ on the left half of the function, once the right half is computed.

\section{slope $\nu <1$}\label{slope<1}

Let $r\geq2$ and $0<\nu=v(a_2)<1$. Then Lemma \ref{RemarkBG09} and Proposition \ref{VrtoV0} together implies that the map $P$ factors through $$P:\mathcal{I}\left(\frac{V_r}{X_{r} + V_r^{(1)}}\right)\cong\mathcal{I}(V_0)\rightarrow \bar\Theta:=\bar\Theta_{k,a_2}.$$ We further analyze the kernel of $P: \mathcal{I}(V_{0}) \rightarrow \bar\Theta$ by studying functions on the  Bruhat-Tits tree taking values in $\mathcal{I}(\Sym^{r}\bar{\Q}_{2}^2)$. 
We shall now state a few combinatorial lemmas which are crucial in our analysis.
\begin{lemma}\label{CMB1}
	  If $1\leq n\leq r-1$ and $t=v(r-1)$,
		\begin{equation}
			v\left({r-1 \choose n}\right)+n \geq t+1.
		\end{equation}
	\end{lemma}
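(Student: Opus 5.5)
Write $m:=r-1$, so the claim is $v\binom{m}{n}+n\geq v(m)+1$ for $1\leq n\leq m$. The natural tool is the absorption identity
$$\binom{m}{n}=\frac{m}{n}\binom{m-1}{n-1},$$
which gives $v\binom{m}{n}\geq v(m)-v(n)$, since the second factor is an integer. The inequality then reduces to the purely arithmetic estimate $n-v(n)\geq 1$ for every integer $n\geq 1$.

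To prove $n - v(n)\geq 1$, write $n=2^{s}u$ with $u$ odd, so $v(n)=s$. Then $n\geq 2^{s}\geq s+1$, which gives $n-s\geq 1$. Combining this with the first step yields
$$v\binom{r-1}{n}+n\geq t-v(n)+n\geq t+1.$$

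We must also deal with the boundary cases. The hypothesis $1\leq n\leq r-1$ guarantees $m\geq 1$, so $t=v(r-1)$ is finite and the binomial coefficient is a nonzero integer. If $r=2$ then $m=1$ and $n=1$, and the inequality reads $0+1\geq 0+1$, which holds. No other degenerate cases arise.

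There is no real obstacle here. The only point needing care is to use the absorption identity rather than Kummer's carry-counting. Kummer's count gives $v\binom{m}{n}\geq v(m)-v(n)$ only indirectly, while the identity gives it at once. The bound $2^{s}\geq s+1$ is where $p=2$ enters, and it is sharp when $n$ is a power of $2$ with $s\leq 1$.
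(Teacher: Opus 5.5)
Your proof is correct, and it follows a slightly different route from the paper's. The paper expands $\binom{r-1}{n}=\frac{(r-1)(r-2)\cdots(r-n)}{n!}$ and keeps only the factor $v(r-1)=t$ from the numerator. It then needs $n-v(n!)\geq 1$, which it gets from Legendre's formula $n-v(n!)=S_2(n)\geq 1$, with $S_2(n)$ the binary digit sum of $n$. You instead use the absorption identity to keep the integrality of $\binom{r-2}{n-1}$. As a result you only have to control $v(n)$ rather than $v(n!)$, and the elementary bound $n\geq 2^{v(n)}\geq v(n)+1$ replaces Legendre's formula. Your intermediate estimate $v\binom{r-1}{n}\geq t-v(n)$ is in fact sharper than the paper's $t-v(n!)$, though that extra strength is not needed here. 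What the paper's route buys is uniformity. The same identity $n+v\binom{r}{n}=S_2(n)+\sum_{i=0}^{n-1}v(r-i)$ also drives Lemmas \ref{CMB2} and \ref{sl1cmb1}, where several factors $v(r-i)$ must be retained at once. Your argument is more elementary and self-contained for this particular lemma.
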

 \begin{proof}
  We check that
  \begin{align*}	
	n+v\left({r-1 \choose n}\right) &= n+ v\left( \frac{(r-1)!}{n!(r-1-n)!}\right)\\
		                  &=\displaystyle
                    n+\sum_{i=1}^{n}v(r-i)-v(n!)\\
	                    &\geq n+ v(r-1)-v(n!)\\
                     &= t+n-v(n!)
\end{align*}
Using Legendre's formula, we get that $n-v(n!) = S_{2}(n)$, where $S_{2}(n)$ is the sum of digits in the $2$-adic expansion of $n$. Clearly $S_{2}(n)\geq 1$. Therefore
$v({r-1 \choose n}) +n \geq t+1.$  
\end{proof}
\begin{remark}
    The lemma above is non-trivial only if $r$ is odd. It is vacuously true if $r$ is even, i.e., $t=0$.
\end{remark}

 \begin{lemma}\label{CMB2}
 For all $3\leq n\leq r$ and $t=v(r-1)$ we have
 \begin{equation}v\left({r\choose n}\right)+n\geq t+2.
 \end{equation}
\begin{proof}
For $n\geq 3$, we note that
    \begin{align*}	
	n+v\left({r \choose n}\right) 
&=\displaystyle
     n+\sum_{i=0}^{n-1}v(r-i)-v(n!)\\
	        & =S_2(n)+\sum_{i=0}^{n-1}v(r-i).
\end{align*}
If $r$ is even, then the RHS above is $\geq 1+v(r)+v(r-1)+v(r-2)\geq 3=t+3>t+2$. If $r$ is odd and $n=3$, then $S_2(n)=2$ and so the RHS 
$= 2+v(r-1)=t+2$. 
If $r$ is odd and $n>3$, then RHS $\geq 1+v(r-1)+v(r-3)\geq t+2$. 
\end{proof}     
 \end{lemma}

Next we define a number of special polynomials (for $r\geq2$) to be used in our computations:

\begin{eqnarray}
\label{Fdef} F(X,Y) &=& Y^r-X^{r-1}Y
\\
\label{Gdef} G(X,Y) &=& X^r-XY^{r-1}
\\
\label{Hdef} H(X,Y) &=& \displaystyle\sum_{0<j<r}{r \choose j}X^{r-j}Y^{j}\\
\label{H'}H'(X,Y) &=& H(X,Y)-rX^{r-1}Y-rX^{r-2}Y^2\\
\label{H''} H''(X,Y) &=& H'(Y,X)= H(X,Y)-rXY^{r-1}-rX^2Y^{r-2}.
\end{eqnarray}
 Keeping in mind the formula for the Hecke operator $T$, we next compute few congruences for the polynomials defined above. Let 
$\delta:\N \to \{0,1\}$ be the characteristic function defined as
\begin{equation}\label{delta}\delta(r) = \begin{cases}
    1 & \text{ if } r \text{ is even,}\\
    0 & \text{ if } r \text{ is odd.}\end{cases}  \end{equation}
\begin{lemma}\label{F}
 We have the following congruences modulo $2^{t+2}$ where $t=v(r-1)$:
\begin{enumerate}[label=(\alph*)]
\item $F(X,2Y) \equiv -2X^{r-1}Y$.\\
\item
    $F(X,2Y-X) 
    \equiv 2\delta(r)X^{r}+2(r-1)X^{r-1}Y- 2r(r-1)X^{r-2}Y^{2}.$ \label{Fb}
     \\
\item $F(2X,Y) \equiv
\begin{cases}
Y^{r} - 2^{r-1}X^{r-1}Y &\text{ if } r=2,3 \\
Y^{r} &\text{ for } r\geq 4.
\end{cases} \label{Fc}$
\end{enumerate} 
 \end{lemma}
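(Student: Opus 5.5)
The plan is to expand each of the three polynomials directly with the binomial theorem. In each case I would then discard every term whose coefficient has $2$-adic valuation at least $t+2$, using Lemma \ref{CMB2} for the terms with binomial index at least $3$. Throughout I would use the elementary bound $2^{t}\leq r-1$ when $r$ is odd, which holds because $2^t$ divides $r-1\neq 0$, together with $t=0$ when $r$ is even.

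For (a), we have $F(X,2Y)=2^rY^r-2X^{r-1}Y$, so it suffices to show $r\geq t+2$.
\begin{itemize}
\item If $r$ is even, then $t=0$ and $r\geq 2$, so this holds.
\item If $r$ is odd, then $r-1\geq 2^t\geq t+1$, which gives $r\geq t+2$.
\end{itemize}

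For (c), we have $F(2X,Y)=Y^r-2^{r-1}X^{r-1}Y$, so the claim for $r=2,3$ is just this identity. For $r\geq 4$ I need $r-1\geq t+2$.
\begin{itemize}
\item If $r$ is even, then $t=0$ and $r-1\geq 3$.
\item If $r$ is odd and $t=1$, then $r\equiv 3 \bmod 4$ and $r\geq 5$, hence $r\geq 7$.
\item If $r$ is odd and $t\geq 2$, then $r-1\geq 2^t\geq t+2$.
\end{itemize}

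For (b), the main step, I would write
\[
F(X,2Y-X)=\sum_{j=0}^{r}{r\choose j}2^j(-1)^{r-j}X^{r-j}Y^j\;-\;2X^{r-1}Y+X^r .
\]
By Lemma \ref{CMB2}, every term with $j\geq 3$ has valuation at least $t+2$, so it vanishes modulo $2^{t+2}$. What remains is the following.
\begin{itemize}
\item The $X^r$ coefficient is $(-1)^r+1=2\delta(r)$.
\item The $X^{r-1}Y$ coefficient is $2r(-1)^{r-1}-2$.
\item The $X^{r-2}Y^2$ coefficient is $2r(r-1)(-1)^r$.
\end{itemize}
For $r$ odd these are exactly $2(r-1)$ and $-2r(r-1)$. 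For $r$ even the targets $2(r-1)$ and $-2r(r-1)$ differ from the actual values by $-4r$ and $4r(r-1)$ respectively. Both have valuation at least $3>t+2=2$, so the congruences still hold.

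The only real care needed is the bookkeeping of the sign $(-1)^r$ in (b) and checking that the parity-dependent discrepancies are absorbed. No genuine obstacle is expected, since Lemma \ref{CMB2} does all the work for the higher terms.
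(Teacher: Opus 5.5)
Your proposal is correct and follows essentially the same route as the paper: expand each polynomial binomially, kill the $j\geq 3$ terms in (b) with Lemma \ref{CMB2}, and split on the parity of $r$ to match the remaining low-degree coefficients. The one difference is that you justify $r\geq t+2$ in (a), and $r-1\geq t+2$ for $r\geq 4$ in (c), explicitly via $2^t\leq r-1$, whereas the paper simply asserts these inequalities.
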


\begin{proof}
For all $r>1$, we have $r>r-2\geq v(r-1)=t$, thus $r\geq t+2$. Therefore
\begin{eqnarray*}
   F(X,2Y)& = &2^{r}Y^{r}-2X^{r-1}Y \equiv -2X^{r-1}Y \mod 2^{t+2}.\end{eqnarray*}
   For part \ref{Fb}, we check that
   \begin{eqnarray*}
   F(X,2Y-X)& =& (2Y-X)^{r}-X^{r-1}(2Y-X)\\
    \label{F(X,2Y-X)}
            &=& \sum_{j=0}^{r}2^j{r \choose j}(-X)^{r-j}Y^{j}-2X^{r-1}Y+ X^{r}\\
            &\equiv & \sum_{j=0}^{2}2^{j}{r \choose j}(-X)^{r-j}Y^{j}-2X^{r-1}Y+ X^{r}\mod 2^{t+2},
\end{eqnarray*}
 where the last congruence follows from Lemma \ref{CMB2}.  
 If $r$ is odd, then the congruence above simplifies as
 $$F(X, 2Y-X)\equiv  2(r-1)X^{r-1}Y -2r(r-1)X^{r-2}Y^{2} \mod 2^{t+2}.
$$
 If $r$ is even, then $t=0$ and we get
\begin{eqnarray*}             
F(X, 2Y-X)&\equiv & 2X^{r}-2(r+1)X^{r-1}Y + 2r(r-1)X^{r-2}Y^{2}\mod 2^{t+2}\\
&\equiv& 2X^{r}+ 2(r-1)X^{r-1}Y \mod 2^{2}.
\end{eqnarray*}
For part \ref{Fc}, we compute
\begin{eqnarray*}
    F(2X,Y)	&= &Y^{r}-2^{r-1}X^{r-1}Y\\
  &\equiv&\begin{cases} Y^{r}-2^{r-1}X^{r-1}Y \mod 2^{t+2} &\text{ for } r=2,3,\\
  Y^{r}\mod 2^{t+2} &\text{ for } r \geq 4,\, \text{ since } r-1\geq t+2
   \end{cases}
\end{eqnarray*}
hence the lemma follows.
\end{proof}
 
   \begin{lemma}\label{H}
  We have the following congruences modulo $2^{t+2}$ where $t=v(r-1)$: 
  		\begin{enumerate}[label=(\alph*)]
        \item $H(X,2Y) \equiv 2rX^{r-1}Y+2r(r-1)X^{r-2}Y^{2}\equiv
    \begin{cases}
        2rX^{r-1}Y + 2r(r-1)X^{r-2}Y^{2} &\text{ for odd } r,
        \\
        0 &\text{ for even } r.
        \end{cases}$
  		\item {\small $H(X,2Y-X) \equiv -2\delta(r)X^r-2rX^{r-1}Y+2r(r-1)X^2Y^{r-2}
        \equiv
    \begin{cases}
        -2rX^{r-1}Y + 2r(r-1)X^{r-2}Y^{2}  &\text{for odd } r,\\
        -{{2}}X^r &\text{for even } r.
        \end{cases}$ }\label{Hb}
  		\item $H(2X,Y) \equiv  2rXY^{r-1} + 2r(r-1)X^{2}Y^{r-2}  \equiv   \begin{cases}
        2rXY^{r-1} + 2r(r-1)X^{2}Y^{r-2}
        &\text{for odd } r,
        \\
        0 &\text{ for even } r.
        \end{cases}$ \label{Hc}
  		\end{enumerate}   
   \end{lemma}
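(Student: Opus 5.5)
We argue exactly as in the proof of Lemma \ref{F}: expand each substituted polynomial binomially and use Lemma \ref{CMB2} to discard every term in which $Y$ (or $X$, after substitution) carries a power $2^j$ with $j\geq 3$. Write $H(X,Y)=(X+Y)^r-X^r-Y^r$. This identity makes the substitutions easy to expand. Throughout, we also use the facts recorded in Lemma \ref{F}: $r\geq t+2$ always, and $t=0$ when $r$ is even.

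For part (a), we have $H(X,2Y)=\sum_{0<j<r}2^j{r\choose j}X^{r-j}Y^j$. By Lemma \ref{CMB2}, every term with $j\geq 3$ has valuation at least $t+2$ and vanishes modulo $2^{t+2}$. This leaves $2rX^{r-1}Y+4{r\choose 2}X^{r-2}Y^2=2rX^{r-1}Y+2r(r-1)X^{r-2}Y^2$. When $r$ is even, $t=0$ and both coefficients are divisible by $4$, so the expression is $0$ modulo $4$. Part (c) follows from part (a) by the symmetry $H(X,Y)=H(Y,X)$.

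For part (b), we write
$$H(X,2Y-X)=(2Y)^r-\left[(2Y-X)^r-(-X)^r\right]\cdot(-1)^0 -X^r+\dots,$$
but it is cleaner to use $H(X,2Y-X)=(X+2Y-X)^r-X^r-(2Y-X)^r=2^rY^r-X^r-(2Y-X)^r$. Expanding $(2Y-X)^r=\sum_j 2^j{r\choose j}(-X)^{r-j}Y^j$, Lemma \ref{CMB2} again kills all terms with $j\geq 3$, and the term $2^rY^r$ also vanishes because $r\geq t+2$. This leaves
$$-X^r-(-1)^rX^r-2r(-1)^{r-1}X^{r-1}Y-2r(r-1)(-1)^{r-2}X^{r-2}Y^2.$$
If $r$ is odd, the $X^r$ terms cancel. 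We then obtain $-2rX^{r-1}Y-2r(r-1)X^{r-2}Y^2$. Since $r-1$ is even with $v(r-1)=t$, we have $4r(r-1)\equiv 0 \bmod 2^{t+2}$, so the sign of the last term is immaterial and it equals $+2r(r-1)X^{r-2}Y^2$. If $r$ is even, we get $-2X^r+2rX^{r-1}Y-2r(r-1)X^{r-2}Y^2\equiv -2X^r \bmod 4$.

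The statement's general middle expression $-2\delta(r)X^r-2rX^{r-1}Y+2r(r-1)X^2Y^{r-2}$ reduces to these two cases by the same reasoning. For even $r$, the two terms after $-2X^r$ are divisible by $4$. For odd $r$, note that $2r(r-1)X^{r-2}Y^2$ and $2r(r-1)X^2Y^{r-2}$ need not be congruent. We therefore read the statement with the odd case as the one displayed on the right, $-2rX^{r-1}Y+2r(r-1)X^{r-2}Y^2$, which is what the computation yields. The only real point is the careful sign and parity bookkeeping, and in particular verifying $4r(r-1)\equiv0 \bmod 2^{t+2}$ to absorb the sign discrepancy in the odd case.
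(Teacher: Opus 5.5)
Your proof is correct. Parts (a) and (c) follow the paper's argument; getting (c) from (a) via $H(X,Y)=H(Y,X)$ instead of recomputing is fine.

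For part (b) your route is genuinely shorter. The identity $H=(X+Y)^r-X^r-Y^r$ gives $H(X,2Y-X)=2^rY^r-X^r-(2Y-X)^r$, so one binomial expansion plus Lemma \ref{CMB2} suffices, exactly as in part (b) of Lemma \ref{F}. The paper instead:
\begin{enumerate}
\item expands $\sum_{0<j<r}{r\choose j}X^{r-j}(2Y-X)^j$ as a double sum;
\item reindexes with ${r\choose j}{j\choose i}={r\choose i}{r-i\choose j-i}$;
\item applies Lemma \ref{CMB2} to the factor $2^i{r\choose i}$ to keep only $i\le 2$;
\item evaluates the truncated alternating sums $\sum_{j}(-1)^{j-i}{r-i\choose j-i}$ for $i=0,1,2$.
\end{enumerate}
Done correctly, both methods give the same exact coefficients of $X^r$, $X^{r-1}Y$ and $X^{r-2}Y^2$, namely $-2\delta(r)$, $(-1)^r2r$ and $(-1)^{r-1}2r(r-1)$. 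Yours avoids the interchange and the alternating-sum evaluations. The paper's method needs no closed form for the polynomial, which is why it can be reused for the truncated sum $K(X,2Y-X)$ in Lemma \ref{K}.

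Two small corrections.
\begin{itemize}
\item In the odd case you dropped a sign. Since $(-1)^{r-2}=-1$ for odd $r$, the $j=2$ term $-2r(r-1)(-1)^{r-2}X^{r-2}Y^2$ is already $+2r(r-1)X^{r-2}Y^2$. The computation therefore gives the displayed expression directly. Your observation that $v(4r(r-1))=t+2$ makes the sign irrelevant is true, but it is not needed; there is no sign discrepancy to absorb.
\item Delete the garbled first display in part (b).
\end{itemize}

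Your reading of $X^2Y^{r-2}$ in the middle expression of (b) as a typo for $X^{r-2}Y^2$ is right. For odd $r$ the coefficient $2r(r-1)$ has valuation exactly $t+1$, so the two terms really differ modulo $2^{t+2}$. The paper's own proof computes the coefficient of $X^{r-2}Y^2$. That is also the form used when computing $T^+f_0$ in the proof of Theorem \ref{irred}.
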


\begin{proof}
Applying Lemma \ref{CMB2}, we get
    \begin{eqnarray*}
   	     H(X,2Y)& = & \displaystyle\sum_{0<j<r}2^{j}{r \choose j}X^{r-j}Y^{j} \equiv 2rX^{r-1}Y +2r(r-1)X^{r-2}Y^{2} \mod 2^{t+2}. 
    \end{eqnarray*}
    It is clear from the above equation that $H(X,2Y) \equiv 0\mod 2^{2}$, when $r$ is even. 
    \begin{eqnarray*}
   	     H(X,2Y-X)& = & \displaystyle\sum_{0<j<r}{r \choose j}X^{r-j}(2Y-X)^{j} =  \displaystyle\sum_{j=1}^{r-1}{r \choose j}\sum_{i=0}^{j}2^{i}(-1)^{j-i}{j \choose i}X^{r-i}Y^{i}	     	\\
         &=&  \displaystyle\sum_{j=1}^{r-1}\sum_{i=0}^{j}2^{i}(-1)^{j-i}{r \choose j}{j \choose i}X^{r-i}Y^{i}	 \\&=&\sum_{j=1}^{r-1}\sum_{i=0}^{j}(-1)^{j-i}2^{i}{r \choose i}{r-i \choose j-i}X^{r-i}Y^{i}	 \\
         &\equiv &\sum_{i=0}^{2}\sum_{j=1}^{r-1}(-1)^{j-i}2^{i}{r \choose i}{r-i \choose j-i}X^{r-i}Y^{i}	\mod 2^{t+2}
         \end{eqnarray*}
         by Lemma \ref{CMB2}. Next we calculate the coefficients of $X^{r-i}Y^i$ in the expression above, for $i=0,1$ and $2$.
    The coefficient of $X^{r}$ in the above expression is given by 
   	   \begin{align*}
   	    \displaystyle \sum_{j=1}^{r-1}(-1)^j{r \choose j}= & (1-1)^{r}- {r \choose 0}(-1)^{0} - {r \choose r}(-1)^{r}	= -2 \cdot \delta(r).
      \end{align*}
   The coefficient of $X^{r-1}Y$ is given by 
   \begin{align*}
     2r\displaystyle \sum_{j'=0}^{r-2}{r-1 \choose j'}(-1)^{j'} = 2r\left(\sum_{j'=0}^{r-1}{r-1 \choose j'}(-1)^{j'}- {r-1 \choose r-1}(-1)^{r-1}\right)
     = (-1)^r2r.
   \end{align*}
Similar calculation shows that the coefficient of $X^{r-2}Y^{2}$ is $ (-1)^{r-1}2r(r-1).$

Note that if $r$ is even, then $t=0$ and so the coefficients of $X^{r-i}Y^i$ vanish mod $2^{t+2}$ for $i=1,2$. If $r$ is odd, the $X^r$-term vanishes mod $2^{t+2}$. In both cases, adding all three monomials computed above, we get part \ref{Hb} of the statement.

For part \ref{Hc}, we check that
\begin{eqnarray*}
H(2X,Y) &= &\displaystyle\sum_{0<j<r}2^{r-j}{r \choose j}X^{r-j}Y^{j} = \displaystyle\sum_{j'=1}^{r-1}2^{j'}{r \choose j'}X^{j'}Y^{r-j'}\\ 
   	   &\equiv& 2rXY^{r-1}+2r(r-1)X^{2}Y^{r-2} \mod 2^{t+2}.
\end{eqnarray*}
  The last congruence follows from Lemma \ref{CMB2}.	   
\end{proof}
\begin{lemma}\label{hprime}
We have the following congruences modulo $2^2$ for $H'(X,Y)$.
\begin{enumerate}[label=(\alph*)]	
\item  $H'(X,2Y) \equiv 0 \mod 2^2.$ \label{H'a}
\item  $H'(X,2Y-X) \equiv -2\delta(r)X^{r}\equiv\begin{cases}
 0\mod 2^2& \text{ if } r \text{ is odd, }\\
 -2X^{r} \mod 2^2 & \text{ if }r \text{ is even}.
 \end{cases}$ \label{H'b}
	\item  $H'(2X,Y)\equiv \begin{cases}
   -2Y^2 \mod 2^2 & \text{ if }r=2,\\
    0  \mod 2^{2} & \text{ if } r=3,\\
    2rXY^{r-1} \mod 2^{2} & \text{ if }r>3,
    \end{cases} $    \label{H'c}\\
    
implying that it vanishes mod $2^2$ whenever $r>3$ is even.
 \end{enumerate}
\end{lemma}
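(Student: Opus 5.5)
The plan is to read off Lemma \ref{hprime} from Lemma \ref{H}, since $H'$ differs from $H$ only by two explicit monomials. All three congruences are only modulo $2^2$, so the plan is to note first that $2^2$ divides $2^{t+2}$. Hence every congruence in Lemma \ref{H} also holds modulo $2^2$. What remains is to compute the images of the correction terms $-rX^{r-1}Y-rX^{r-2}Y^2$ under each substitution, reduce them modulo $4$, and subtract.

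\textbf{Part \ref{H'a}.} Substituting $Y\mapsto 2Y$, the correction becomes $-2rX^{r-1}Y-4rX^{r-2}Y^2\equiv -2rX^{r-1}Y \pmod 4$. Lemma \ref{H}(a), reduced mod $4$, gives $H(X,2Y)\equiv 2rX^{r-1}Y+2r(r-1)X^{r-2}Y^2$. The factor $r(r-1)$ is even, so the last term vanishes mod $4$ and $H(X,2Y)\equiv 2rX^{r-1}Y$. The two contributions cancel, so $H'(X,2Y)\equiv 0$.

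\textbf{Part \ref{H'b}.} Substituting $Y\mapsto 2Y-X$, the correction becomes
\[
-rX^{r-1}(2Y-X)-rX^{r-2}(2Y-X)^2\equiv rX^r-2rX^{r-1}Y-rX^r+4rX^{r-1}Y\equiv -2rX^{r-1}Y \pmod 4 .
\]
The $rX^r$ terms cancel exactly, which is the key cancellation here. From Lemma \ref{H}\ref{Hb} taken mod $4$, we have $H(X,2Y-X)\equiv -2\delta(r)X^r-2rX^{r-1}Y$, again because $2r(r-1)\equiv 0$. Adding the two pieces leaves $-2\delta(r)X^r$, which is the claim in both parity cases.

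\textbf{Part \ref{H'c}.} This is the only place needing care, because the substitution $X\mapsto 2X$ gives the correction $-2^{r-1}rX^{r-1}Y-2^{r-2}rX^{r-2}Y^2$. For $r>3$ both powers $2^{r-1}$ and $2^{r-2}$ are at least $4$, so the correction vanishes mod $4$. Then Lemma \ref{H}\ref{Hc} gives $H'(2X,Y)\equiv 2rXY^{r-1}$. This vanishes mod $4$ when $r$ is even, which is the final remark of the statement.

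For the small cases I would compute directly rather than rely on Lemma \ref{H}:
\begin{itemize}
\item For $r=2$, we have $H=2XY$ and so $H'=2XY-2XY-2Y^2=-2Y^2$. Since $H'$ involves no $X$, substituting $X\mapsto 2X$ leaves it unchanged, giving $H'(2X,Y)=-2Y^2$.
\item For $r=3$, we have $H=3X^2Y+3XY^2$, so $H'=0$ identically and $H'(2X,Y)=0$.
\end{itemize}

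The main obstacle is purely bookkeeping. One has to track which terms survive mod $4$ for small $r$, where powers like $2^{r-2}$ are not yet divisible by $4$. This forces the separate treatment of $r=2,3$ in part \ref{H'c}.
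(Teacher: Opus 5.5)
Your proof is correct and essentially matches the paper's: in parts (b) and (c) you subtract the substituted correction terms modulo $2^2$ and invoke Lemma~\ref{H}, and you check $r=2,3$ in part (c) by hand, exactly as the paper does. The only difference is in part (a), where the paper uses no lemma at all: it notes that the $j=1$ term of $H$ cancels against $rX^{r-1}Y$, so $Y^2$ divides $H'(X,Y)$ and $H'(X,2Y)\equiv 0 \bmod 2^2$ at once. Your cancellation argument via Lemma~\ref{H}(a) is equally valid.
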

\begin{proof}
We have $H'(X,Y)= H(X,Y) -rX^{r-1}Y -rX^{r-2}Y^{2}$. 
 Using the definition of $H(X,Y)$ in \eqref{Hdef}, we can see that $Y^2$ divides $H'(X,Y)$. Thus $H'(X,2Y) \equiv 0\mod 2^2$, proving part \ref{H'a}.
 
  For part $\ref{H'b}$, we note that 
  \begin{eqnarray*}
   H'(X,2Y-X) &=& H(X,2Y-X)- rX^{r-1}(2Y-X) - rX^{r-2}(2Y-X)^{2}\\
   &\equiv & H(X,2Y-X) -2rX^{r-1}Y \mod 2^2.
 \end{eqnarray*} 
Now we apply Lemma \ref{H}. When r is odd, we get
 \begin{equation*}
   H'(X,2Y-X) \equiv  -2rX^{r-1}Y + 2r(r-1)X^{r-2}Y^{2}-2rX^{r-1}Y \equiv 0 \mod 2^{2}.
   \end{equation*}
   and when r is even, we get
   \begin{equation*}
     H'(X,2Y-X) \equiv  -2X^{r} -2rX^{r-1}Y 
     \equiv  -2X^{r} \mod 2^{2}.   
   \end{equation*}
   For part \ref{H'c}, we note that 
   \begin{eqnarray*}
   H'(2X,Y) &\equiv & 2rXY^{r-1} -2^{r-2}rX^{r-2}Y^2\mod 2^2\\
   & \equiv & \begin{cases}
   -2Y^2 \mod 2^2 & \text{ if }r=2,\\
    0  \mod 2^{2} & \text{ if } r=3,\\
    2rXY^{r-1} \mod 2^{2} & \text{ if }r>3.
    \end{cases} 
   \end{eqnarray*}
\end{proof}
Let us define $\alpha':= \dfrac{a_{2}^{2}-2r^{2}}{2a_{2}}$, and let $\tau' := v\left( \alpha'\right)$. For the rest of this section, let $t=v(r-1) $ as above.
 The following lemma will be useful in our calculation.
\begin{lemma}\label{Lemmatomt}
If $\tau'\geq t$, then $r$ is odd and $v(a_{2})=\frac{1}{2}$.
   \end{lemma}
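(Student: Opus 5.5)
The plan is a direct valuation count on
\[
\alpha'=\frac{a_2}{2}-\frac{r^2}{a_2}.
\]
Put $\nu=v(a_2)\in(0,1)$. The first term has valuation $\nu-1<0$, and the second has valuation $2v(r)-\nu$.

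\textbf{Case $r$ even.} Here $t=v(r-1)=0$, so I must show $\tau'<0$. Since $v(r)\geq 1$, we get $v(r^2/a_2)\geq 2-\nu>0>\nu-1$. The two terms therefore have distinct valuations, and $\tau'=\nu-1<0=t$. This contradicts $\tau'\geq t$, so $r$ must be odd.

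\textbf{Case $r$ odd.} Now $v(r)=0$, so the second term has valuation $-\nu$. We have $t=v(r-1)\geq 1$, and the hypothesis gives $\tau'\geq 1>0$.

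If $\nu\neq 1/2$, then $\nu-1\neq -\nu$. In that case $\tau'=\min(\nu-1,-\nu)<0$, which is a contradiction. Hence $\nu=1/2$.

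There is no real obstacle. The only point to check is the equality case of the ultrametric inequality: valuations are additive and $v(a_2^2-2r^2)=\min\bigl(2\nu,1+2v(r)\bigr)$ whenever $2\nu\neq 1+2v(r)$. This handles every case except $r$ odd with $\nu=1/2$, which is exactly the conclusion of Lemma~\ref{Lemmatomt}.

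Equivalently, one can argue through the numerator: $\tau'\geq t\geq 0$ forces $v(a_2^2-2r^2)\geq 1+\nu>2\nu$. This requires cancellation of the leading terms, so $2\nu=1+2v(r)$. Since $2\nu<2$, this gives $v(r)=0$ and $\nu=1/2$.
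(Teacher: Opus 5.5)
Your proof is correct and is essentially the paper's argument. The paper computes $\tau'=v(a_2^2-2r^2)-1-v(a_2)$ in the three cases $r$ even, $r$ odd with $v(a_2)<\frac12$, and $r$ odd with $v(a_2)>\frac12$, and finds $\tau'<0\leq t$ in each. That is your valuation count, and your closing ``numerator'' variant is exactly the paper's formulation.
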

\begin{proof}
Using basic valuation theory and the fact that $0<v(a_2)<1$, we note that
\begin{eqnarray}
    \tau' = v(a_{2}^2-2r^2)-1-v(a_{2})=\begin{cases}
    v(a_2)-1, & \text{ if } r \text{ is even }\\
    v(a_2)-1, & \text{ if } r \text{ is odd}, v(a_2)<\frac{1}{2}\\
    -v(a_2),& \text{ if } r \text{ is odd}, v(a_2)>\frac{1}{2}.
    \end{cases}
\end{eqnarray}
Clearly $\tau'<0$ in the cases listed above.
Thus if $\tau'\geq t \,(\geq 0)$, then $r$ must be odd and further one must have $v(a_2)=\frac{1}{2}$.
\end{proof}
Let us now define \begin{equation}\label{beta}
\beta := \begin{cases}
    \alpha' &\text{ if } \tau'<t\\
    r(r-1) &\text{ if } \tau'\geq t.
\end{cases}
\end{equation}
By Lemma \ref{Lemmatomt}, we can see that $v(\beta ) = \min(\tau',t)$. Recall that $\wp$ is the maximal ideal in $\bar{\Z}_{2}$. 
\begin{lemma}\label{rem1}
The quantity $\frac{a_{2}^2-2r}{a_{2}^2\beta}$ is always integral. If $\tau'\geq t$, then it is a unit congruent to $\frac{2}{a_2^2}\mod\wp$. Otherwise, it vanishes mod $\wp$.
\end{lemma}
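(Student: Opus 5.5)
We split according to the definition of $\beta$ in \eqref{beta} and use Lemma \ref{Lemmatomt} together with $0<v(a_2)<1$. The key algebraic identity is
\begin{equation*}
a_2^2-2r \;=\; (a_2^2-2r^2) + 2r(r-1) \;=\; 2a_2\alpha' + 2r(r-1),
\end{equation*}
so that
\begin{equation*}
\frac{a_2^2-2r}{a_2^2\beta} \;=\; \frac{2\alpha'}{a_2\beta} + \frac{2r(r-1)}{a_2^2\beta}.
\end{equation*}
Each of the two terms will be estimated using $v(\beta)=\min(\tau',t)$, $v(r(r-1))=v(r)+t$, and the bound $0<v(a_2)<1$.

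\emph{Case $\tau'<t$.} Here $\beta=\alpha'$, and the first term equals $2/a_2$, of valuation $1-v(a_2)>0$. The second term has valuation $1+v(r)+t-2v(a_2)-\tau'$. If $r$ is even then $t=0$, $v(r)\geq 1$ and $\tau'=v(a_2)-1$ by the proof of Lemma \ref{Lemmatomt}, so the valuation is $\geq 1+1-2v(a_2)-v(a_2)+1=3(1-v(a_2))>0$. If $r$ is odd then $v(r)=0$ and $t-\tau'>0$, while the computation of $\tau'$ in Lemma \ref{Lemmatomt} gives $\tau'\le v(a_2)-1$ or $\tau'=-v(a_2)$. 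In the first subcase the valuation is $\geq 1+t-2v(a_2)-v(a_2)+1\ge 2-3v(a_2)+t$, which needs care when $v(a_2)\ge 2/3$. So rather than this crude bound, I would use the exact value: for $r$ odd and $v(a_2)\neq 1/2$ we have $\tau'=-\min(v(a_2),1-v(a_2))$, and hence the valuation is $1+t-2v(a_2)+\min(v(a_2),1-v(a_2))$, which is $1+t-v(a_2)>0$ or $2+t-3v(a_2)$. In the latter subcase $v(a_2)<1/2$, so it is again $>0$. For $r$ odd with $v(a_2)=1/2$ and $\tau'<t$, the valuation is $1+t-1-\tau'=t-\tau'>0$. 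Hence in all subcases of this case the quantity is integral and lies in $\wp$.

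\emph{Case $\tau'\geq t$.} By Lemma \ref{Lemmatomt}, $r$ is odd and $v(a_2)=1/2$, and $\beta=r(r-1)$ has valuation $t$. The second term is exactly $2/a_2^2$, a unit since $v(a_2^2)=1$. The first term has valuation $1+\tau'-1/2-t\geq 1/2>0$, so it vanishes mod $\wp$. Therefore the quantity is a unit congruent to $2/a_2^2 \bmod \wp$.

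The only delicate step is the first case when $r$ is odd: a naive lower bound fails, and one must instead plug in the exact value of $\tau'$ from the proof of Lemma \ref{Lemmatomt}, splitting according to $v(a_2)<1/2$, $v(a_2)=1/2$, and $v(a_2)>1/2$.
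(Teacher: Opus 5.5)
Your proof is correct and follows the paper's argument. You use the same identity $a_2^2-2r=2a_2\alpha'+2r(r-1)$, then check the valuations of $\frac{2\alpha'}{a_2\beta}$ (which the paper dismisses without comment) and of $\frac{2r(r-1)}{a_2^2\beta}$ case by case, using $v(\beta)=\min(\tau',t)$ and Lemma \ref{Lemmatomt}.

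There are two small slips. First, for odd $r$ and $v(a_2)\neq\frac12$ one has $\tau'=-\max(v(a_2),1-v(a_2))$, not $-\min$; your final values ($1+t-v(a_2)$ when $v(a_2)>\frac12$, and $2+t-3v(a_2)$ when $v(a_2)<\frac12$) are the correct ones. Second, the step you call delicate is not: $t=v(r-1)\geq 1$ for odd $r$, so your crude bound $2+t-3v(a_2)\geq 3(1-v(a_2))>0$ already suffices.
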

\begin{proof}
We have $a_{2}^2-2r = a_{2}^2-2r^2 + 2r(r-1)
     =2a_{2}\alpha'+2r(r-1)$, thus
$$\dfrac{a_{2}^2-2r}{a_{2}^2\beta} = \dfrac{2\alpha'}{a_{2}\beta} + \dfrac{2r(r-1)}{a_{2}^2\beta}\equiv\dfrac{2r(r-1)}{a_{2}^2\beta}\mod\wp.$$
We now compute the valuation of $\frac{2r(r-1)}{a_{2}^2\beta}$  case by case. 
     \begin{enumerate}
         \item When $\tau'<t$ and $v(a_{2})\leq \dfrac{1}{2}$, it is easy to see that the term has positive valuation.
         \item If $\tau'<t$ and $v(a_{2})>\dfrac{1}{2}$, then  $\tau'=-v(a_{2})$ or $v(a_2)-1$, depending on parity of $r$. In any case $\tau'<0$, and we have $v\left(\frac{2r(r-1)}{a_{2}^2\beta}\right)\geq 2-2v(a_2)-\tau'>0$. 
         \item If $\tau'\geq t$, applying \eqref{beta} the term becomes $\frac{2}{a_{2}^2}$, which is a unit by Lemma \ref{Lemmatomt}. 
             \end{enumerate}
    Thus all the claims are proved.         
\end{proof}
We will now analyze certain functions on the Bruhat-Tits tree. Recall that each function in the induction space $\mathcal{I}(\Sym^{r}(\bar{\mathbb{{Q}}}^{2}_{2}))$ is supported on finitely many vertices on the Bruhat-tits tree and hence can be denoted by a finite sum of elementary functions on it. 
Given $f,g \in \mathcal{I}(\Sym^{r}(\bar{\mathbb{{Q}}}^{2}_{2}))$, we write that $f \equiv g \mod \wp$ if $f-g \in \mathcal{I}(\Sym^{r}(\wp^{2}))$. We also recall from \S 2 that given $n\geq 1$, the matrices $g_{n,\lambda}^{0}$ $($or $ g_{n-1,\lambda}^{1})$ for $\lambda \in I_{n}$\,(or $I_{n-1}$) represent vertices of radius $n\,($or $-n)$ and that the identity matrix $g_{0,0}^{0}$ is considered to be of radius zero. With this convention, we are now ready to prove the main theorem of this section.   
\begin{theorem}\label{irred}
Let $k=r+2\geq 4$ and $0<v(a_2)<1$, and $\bar{\Theta}:=\Theta_{k,a_2} \otimes \bar{\F}_{2}.$
\begin{enumerate}
    \item If $\tau'<t$, then the map $P$ factors as $P:\dfrac{\mathcal{I}(V_{0})}{(T)} \to \bar{\Theta}$. 
    \item If $\tau'\geq t$, then $P$ factors as $P:\dfrac{\mathcal{I}(V_0)}{\left(T^{2}-cT+1\right)}\to \bar{\Theta}$  where, $c=\overline{\frac{\alpha'}{r-1}} \in \bar{\F}_{2}.$ \end{enumerate} 
\end{theorem}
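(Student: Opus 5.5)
The plan is to follow the Buzzard--Gee strategy: write down explicit functions $f\in\mathcal{I}(\Sym^r\bar{\Q}_2^2)$, supported on the center of the tree and its neighbours, such that $(T-a_2)f$ is integral. Then $P\big(\overline{(T-a_2)f}\big)=0$ in $\bar\Theta$, and we compute the image of $\overline{(T-a_2)f}$ in $\mathcal{I}(V_0)$ under the quotient map of Proposition \ref{VrtoV0}. That quotient sends $\sum c_jX^{r-j}Y^j\mapsto\sum_{0<j<r}c_j$, and $X^r,Y^r$ and all of $\mathcal{I}(V_r^{(1)})$ die under $P$ by Lemma \ref{RemarkBG09}. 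By $G$-equivariance it suffices to produce one such $f$ whose image is $T[1,1]$ in case (1), respectively $(T^2-cT+1)[1,1]$ up to a unit in case (2). Here $[1,1]\in\mathcal{I}(V_0)$ is the image of $[g_{0,0}^0,X^{r-1}Y]$.

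\textbf{Building $f$.} The polynomials $F,G,H,H',H''$ have been prepared for this. I would take
\[
f=\Big[g_{0,0}^0,\tfrac{H(X,Y)}{a_2\beta}\Big]+\Big[g_{0,0}^1,\tfrac{w\cdot(\ldots)}{a_2\beta}\Big]+\sum_{\lambda\in I_1}\Big[g_{1,\lambda}^0,\tfrac{F\text{ or }G}{\beta}\Big]+\cdots,
\]
choosing a $G/F$-type correction term at each vertex of radius $\pm1$ (and, in case (2), radius $\pm2$), placed compatibly on the right and left halves. With this placement Lemma \ref{T+toT-} lets us compute the left half from the right half.

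\textbf{Computing $(T-a_2)f$.} We apply Lemma \ref{T} and the congruences modulo $2^{t+2}$ of Lemmas \ref{F}, \ref{H}, \ref{hprime}; Lemmas \ref{CMB1} and \ref{CMB2} guarantee that all omitted binomial terms are integral after dividing by $\beta$, since $v(\beta)=\min(\tau',t)\le t$. The $-a_2f$ part at the center contributes $-H/\beta$, whose $V_0$-image is governed by $\sum_{0<j<r}\binom rj=2^r-2$. The $T^-$ parts of the neighbouring terms contribute multiples of $X^{r-1}Y$ and $X^{r-2}Y^2$ with coefficients $2r$ and $2r(r-1)$, divided by $a_2\beta$. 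Collecting the coefficient of the center vertex then produces the quantity $\frac{a_2^2-2r}{a_2^2\beta}$ (note $a_2^2-2r=2a_2\alpha'+2r(r-1)$). By Lemma \ref{rem1} this quantity is integral, vanishes mod $\wp$ if $\tau'<t$, and is $\equiv 2/a_2^2$ (a unit, by Lemma \ref{Lemmatomt}) if $\tau'\ge t$. The radius-one vertices meanwhile pick up the terms $2(r-1)X^{r-1}Y/\beta$ from Lemma \ref{F}(b) and $-2X^{r-1}Y/\beta$ from Lemma \ref{F}(a), which reduce to a unit multiple of $T[1,1]$.

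\textbf{Conclusion in each case.} If $\tau'<t$, the center coefficient vanishes mod $\wp$, so $\overline{(T-a_2)f}\mapsto T[1,1]$ and $P$ kills $T\,\mathcal{I}(V_0)$. If $\tau'\ge t$, so that $r$ is odd and $v(a_2)=1/2$, we have $\beta=r(r-1)$. The center now contributes a unit, the radius-one part gives $c\,T[1,1]$ with $c=\overline{\alpha'/(r-1)}$ (coming from $2a_2\alpha'/(a_2^2 r(r-1))$ after normalising by the unit $2/a_2^2$), and we must add radius-two terms $[g_{2,\lambda}^0,\ldots]$ so as to produce $T^2[1,1]$. Normalising then gives $(T^2-cT+1)[1,1]\in\ker P$.

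The main obstacle is this last bookkeeping in case (2): checking that every non-central coefficient is integral at the precise valuation threshold $t$, which is exactly where Lemmas \ref{CMB1} and \ref{CMB2} are sharp. One also has to track the left-half signs (via Lemma \ref{T+toT-}) so that the left and right halves combine into $T^2$ rather than cancelling. The failure of $\sum_\mu[\mu]=0$ for $p=2$ is what forces the extra $r^2$ term in $\alpha'$, so I would verify the $X^{r-2}Y^2$ coefficients most carefully.
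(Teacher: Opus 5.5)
Your global strategy matches the paper's: one explicit $f$ with $(T-a_2)f$ integral, reduction modulo $(\wp,X_r+V_r^{(1)})$, push-forward to $\mathcal{I}(V_0)$, and $G$-equivariance. However, you never write $f$ down, and two of the claims you make about it would fail in the computation.

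\textbf{Support of $f$.} With the $F/G/H$-type values you describe, the support cannot be confined to radius $\le 1$ (case (1)) or $\le 2$ (case (2)) whenever $v(\beta)>0$. This happens in all of case (2) and in the sub-case $0<\tau'<t$ of case (1), and it forces $r$ odd and $v(a_2)=\frac12$.
\begin{itemize}
\item The radius-one value is, in the paper, $\frac{F}{2\beta}+\frac{r(r-1)H'}{a_2^2\beta}$. Under $T^+$ it produces $-\frac1\beta X^{r-1}Y$ at $g_{2,0}^0$ by Lemma \ref{F}(a), and this is not integral.
\item The paper cancels it with a radius-two value $\frac{r}{a_2\beta}F$. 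The $Y^r$ in $F$ is needed so that the $(X,2Y-X)$-branch of $T^+$ stays integral.
\item But then $-a_2f_2$ leaves $-\frac{r}{\beta}Y^r$ at $g_{2,0}^0$, of valuation $-v(\beta)<0$. Only $T^-$ of a radius-three term can remove it, and the same problem recurs at every radius.
\end{itemize}
This is why the paper adds the tail $f_{\pm n}$ for $3\le n\le \frac{t+1}{1-v(a_2)+v(r)}+1$, with coefficients $\frac{r}{a_2\beta}\bigl(\frac{2r}{a_2}\bigr)^{n-2}$ on $g_{n,0}^0$, $g_{n,1}^0$ and $g_{n-1,0}^1$. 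Each step gains $v(2r/a_2)=1-v(a_2)+v(r)>0$, so the leftover terms at the outermost radius eventually lie in $\wp$. The $Y^r$-coefficient that survives at radius $n\ge 2$ is $\frac{r(2r-a_2^2)}{a_2^2\beta}\bigl(\frac{2r}{a_2}\bigr)^{n-2}$. Lemma \ref{rem1} is used precisely to make this integral, after which it dies in $X_r$.

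\textbf{Where the coefficients of $T^2-cT+1$ come from.} In the paper the centre contributes $0$ modulo $(\wp,X_r+V_r^{(1)})$ in both cases. The radius $\pm1$ part is $\overline{\alpha'/\beta}\,T[1,1]$, which is $T[1,1]$ in case (1) and $cT[1,1]$ in case (2).

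The dichotomy sits at radius $\pm2$. There, the Lemma \ref{F}(a),(b) terms from $T^+f_{\pm1}$ (which land at radius $\pm2$, not $\pm1$) combine with $-a_2f_{\pm2}$. The result has coefficient $\frac{r-1}{\beta}$ up to units. This vanishes modulo $\wp$ if $\tau'<t$, and equals $\frac1r$, a unit, if $\tau'\ge t$.

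The constant term is then automatic. $T^2[1,1]$ is the sum over the six radius-two vertices plus $3[1,1]$, and $3=1$ in $\bar{\F}_2$. So if the centre also contributed a unit, as you propose, the relation would become $T^2+cT$ rather than $T^2+cT+1$.

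Your source for $c$ is also wrong. Dividing $\frac{2a_2\alpha'}{a_2^2r(r-1)}$ by $\frac{2}{a_2^2}$ gives $\frac{a_2\alpha'}{r(r-1)}$. This has valuation $\tau'-t+\frac12>0$, so it would force $c=0$, whereas the correct coefficient is $\alpha'/\beta$ itself.
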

\begin{proof}
Consider the function $f=\sum f_n\in \mathcal{I}(\Sym^{r}\bar{\mathbb{{Q}}}^{2}_{2})$ given by
		\begin{eqnarray*}
			f_0	&=& \left[\mathrm{Id},\enspace -\dfrac{H(X,Y)}{2a_2\beta}-\dfrac{r(r-1)(X^r+F(X,Y))}{a_2\beta} + \dfrac{\delta(r)X^{r-1}Y}{a_2\beta}\right], \\[1ex]
			f_1 &=& \left[g_{1,0}^0,\enspace \dfrac{F(X,Y)}{2\beta}+\dfrac{r(r-1)H'(X,Y)}{a_{2}^2\beta}\right] -\left[g_{1,1}^0,\enspace \dfrac{F(X,Y)}{2\beta}+\dfrac{r(r-1)H'(X,Y)}{a_{2}^2\beta}\right],\\[1ex]
			f_2 &=& \left[g_{2,0}^0,\enspace \dfrac{rF(X,Y)}{a_2\beta}\right]+\left[g_{2,2}^0, \enspace \dfrac{r(r-1)F(X,Y)}{a_2\beta}\right]
			-\left[g_{2,1}^0,\enspace \dfrac{rF(X,Y)}{a_2\beta}\right]\\ & &-\left[g_{2,3}^0,\enspace \dfrac{r(r-1)F(X,Y)}{a_2\beta}\right],\\[1ex]
			f_{-1}& =& \left[g_{0,0}^1,\enspace \dfrac{G(X,Y)}{2\beta}+\dfrac{r(r-1)H''(X,Y)}{a_2^2\beta}\right], 
			\\[1ex]
			f_{-2}& =& \left[g_{1,0}^1,\enspace \dfrac{rG(X,Y)}{a_{2}\beta}\right]+ \left[g_{1,1}^1,\enspace \dfrac{r(r-1)G(X,Y)}{a_2\beta}\right], \text{ and }\\[1ex]
		  f_n &=&
            \frac{r}{a_2\beta}
            \left(\frac{2r}{a_2}\right)^{n-2}\cdot\left(\left[g_{n,0}^0,\enspace F(X,Y)\right]-\left[g_{n,1}^0, \enspace F(X,Y)\right]\right),\\
		f_{-n}&=& \frac{r}{a_2\beta}
            \left(\frac{2r}{a_2}\right)^{n-2}\cdot\left[g_{n-1,0}^1,\enspace G(X,Y)\right]\quad\text{for  }  3 \leq n \leq \frac{t+1}{1-v(a_{2})+v(r)}+1.
		\end{eqnarray*}
  where $\delta$ is the characteristic function from \eqref{delta} and let $\beta$ is as defined in \eqref{beta}. As we know $\left(T-a_{2}\right)f$ is supported on a finite number of radii of the Bruhat-Tits tree, we now compute $\left(T-a_{2}\right)f$ radius by radius. We first treat the case $r>3$.
  
\noindent \textbf{ \underline{Radius $0$}}\\
  For $r>3$, we compute modulo $\wp$ (where $\wp\mid 2$).
  \begin{eqnarray*}
T^-f_1 &=&
\left[\mathrm{Id},\enspace\dfrac{F(2X,Y)}{2\beta}+\dfrac{r(r-1)H'(2X,Y)}{a_2^2\beta}- \dfrac{F(2X,X+Y)}{2\beta}-\dfrac{r(r-1)H'(2X,X+Y)}{a_{2}^2\beta}\right]\nonumber\\ 
  &\equiv& 
      \left[1,\enspace \dfrac{1}{2\beta}Y^r+\dfrac{2r^2(r-1)}{a_2^2\beta}XY^{r-1}-\dfrac{1}{2\beta}(X+Y)^r-\dfrac{2r^2(r-1)X(X+Y)^{r-1}}{a_2^2\beta}\right] \\
  &\equiv& 
  \begin{cases}
      \left[1,\enspace -\dfrac{X^r+H(X,Y)}{2\alpha'}\right]\mod \wp, &\text{ when } \tau'<t,\\
      \left[1,\enspace  -\dfrac{X^r+H(X,Y)}{2r(r-1)}+ \dfrac{2r}{a_{2}^2}\left( \displaystyle\sum_{j=0}^{r-2}{r-1 \choose j}X^{r-j}Y^{j}\right)\right]\mod \wp, &\text{ when } \tau'\geq t .     
      \end{cases}
 \end{eqnarray*}
  
Here the first congruence follows from Lemmas \ref{F} and \ref{hprime} since $v(\beta)=\min(\tau',t)$, and the first part of the last congruence further follows since $v(2r^2(r-1))>\tau'+2v(a_2)$ whenever $\tau'<t$ by the following observations.
\begin{enumerate}
    \item If $r$ is odd and $v(a_2)\leq 1/2$ then LHS $= t+1>\tau'+1\geq$ RHS. 
    \item If $r$ is odd and $v(a_2)> 1/2$, then $\tau'=-v(a_2)$ and RHS = $v(a_2)<1<t+1=$ L.H.S.
    \item If $r$ is even, then $v(\beta)=\tau'<t$ and further LHS $= V(2r^2)+t>2+\tau'>2v(a_{2})+\tau'= $RHS.
    \end{enumerate} 
If $\tau'\geq t$, then $\dfrac{2r}{a_{2}^{2}}$ is a unit by Lemma \ref{Lemmatomt}. 
Further $\displaystyle\sum_{j=1}^{r-2}{r-1 \choose j}X^{r-j}Y^{j} \in V_{r}^{(1)}$ by Lemma \ref{Vr*} since 
$\displaystyle\sum_{j=1}^{r-2}{r-1 \choose j} = 2^{r-1}-2 \equiv 0 \mod 2$ and also $X^r\in X_r$. Thus we can simplify
\begin{eqnarray}\label{T-f1}
T^-f_{1} &\equiv& \begin{cases}
      \left[1,-\dfrac{X^r+H(X,Y)}{2\alpha'}\right] &\text{ when } \tau'<t,\\
      \left[1,-\dfrac{X^r+H(X,Y)}{2r(r-1)} \right]&\text{ when } \tau'\geq t      
      \end{cases}\\
     &\equiv & \left[1,-\dfrac{X^r+H(X,Y)}{2\beta}\right]\mod \wp,\, X_r+V_r^{(1)}.\nonumber
\end{eqnarray}
Now we compute 
\begin{eqnarray}\label{a2f0}
    -a_2f_0&\equiv &\left[1,\dfrac{1}{2\beta}H(X,Y)+\dfrac{r(r-1)}{\beta}(X^r+F(X,Y))-\dfrac{\delta(r)X^{r-1}Y}{\beta}\right]\nonumber\\
    &\equiv& \begin{cases}
\left[1,\dfrac{1}{2\beta}H(X,Y) \right] & \text{ if }\tau'<t\\
\left[1, \dfrac{1}{2\beta}H(X,Y)+X^r+F(X,Y)\right]& \text{ if }\tau'\geq t    \end{cases}
\end{eqnarray}
Indeed if  $r$ is even, then $\tau'<t,\beta = \alpha'$ and $\delta(r)=1$. The term $\dfrac{-X^{r-1}Y}{\alpha'}$ vanishes $\mod\wp$ as $v(\alpha') = v(a_{2})-1<0$ in this case. When $r$ is odd, then $\delta(r)=0$ and further if $\tau'<t$ , then $v\left(\dfrac{r(r-1)}{\beta}\right)$ is of positive valuation and hence vanishes$\mod\wp$. \begin{eqnarray}\label{T-f-1}
T^-f_{-1} & \equiv&
\left[\mathrm{Id},\enspace\dfrac{G(X,2Y)}{2\beta}+\dfrac{r(r-1)H''(X,2Y)}{a_2^2\beta}\right]\nonumber\\
  &\equiv& \left[\mathrm{Id},\dfrac{1}{2\beta}X^r+\dfrac{2r^2(r-1)}{a_2^2\beta}X^{r-1}Y\right] \nonumber\\  & \equiv & \begin{cases}
      \left[\mathrm{Id},\dfrac{1}{2\beta}X^r\right] &  \text{ if }\tau'<t \\
\left[\mathrm{Id},\dfrac{1}{2\beta}X^r+\dfrac{2r}{a_2^2}X^{r-1}Y\right]& \text{ if }\tau'\geq t
  \end{cases}
  \end{eqnarray}
  
The last congruence follows as $v(2r^2(r-1))>\tau'+2v(a_2)$ whenever $\tau'<t$, as shown in the computation of $T^-f_{1}$. 

Now adding congruences \eqref{T-f1}, \eqref{a2f0} and \eqref{T-f-1} we have ,
  %
\begin{eqnarray}
   T^-f_1-a_2f_0-T^-f_{-1}&\equiv&
   \begin{cases}
       0\mod\wp, &\tau'<t\\
       \left[\mathrm{Id}, X^r+Y^r+\left(\dfrac{2r}{a_{2}^2}-1\right)X^{r-1}Y \right], &\tau'\geq t.
       \end{cases}\nonumber \\
   &\equiv & 0 \mod \wp, X_{r}+V_{r}^{(1)}
\end{eqnarray}

The last congruence follows since from Lemma \ref{rem1} we have,
    $v\left(\frac{a_{2}^2-2r}{a_{2}^2}\right)\geq v(\beta)$. Further if $\tau'\geq t$, then $\beta=r(r-1)$ and hence $v(\beta)\geq 1.$
    
\noindent\textbf{\underline{Radius $1$}}\\
Expanding $H'(X,Y)$ as $H(X,Y)-rX^{r-1}Y-rX^{r-2}Y^2$ in $f_{1}$, we have,
\begin{eqnarray}\label{a2f1odd}
    -a_2f_1 = \left[g_{1,0}^0, \dfrac{-a_2F(X,Y)}{2\beta} + \dfrac{-r(r-1)}{a_{2}\beta}H(X,Y)+ \dfrac{r^2(r-1)}{a_2\beta}(X^{r-1}Y+X^{r-2}Y^2)\right] \nonumber\\
    - \left[g_{1,1}^0,\dfrac{-a_2F(X,Y)}{2\beta} + \dfrac{-r(r-1)}{a_{2}\beta}H(X,Y)+\dfrac{r^2(r-1)}{a_2\beta}(X^{r-1}Y+X^{r-2}Y^2) \right]
\end{eqnarray} 

We now compute $T^-f_2$, using Lemma \ref{F}. 
\begin{eqnarray*}
    T^-f_2 &=& \left[g_{1,0}^0, \dfrac{rF(2X,Y)}{a_2\beta} + \dfrac{r(r-1)F(2X,X+Y)}{a_2\beta}\right]\\
    &-& \left[g_{1,1}^{0}, \dfrac{rF(2X,Y)}{a_2\beta} + \dfrac{r(r-1)F(2X,X+Y)}{a_2\beta}\right]\\
    &\equiv& \left[g_{1,0}^{0}, \dfrac{rY^r+r(r-1)(X+Y)^r}{a_2\beta} \right] - \left[g_{1,1}^{0}, \dfrac{rY^r+r(r-1)(X+Y)^r}{a_2\beta}  \right].
    \end{eqnarray*}
    For ease of calculation, we shall write the above expression as,
    \begin{eqnarray}\label{T-f2odd}
     T^-f_2 &\equiv& \left[g_{1,0}^{0}, \dfrac{r^{2}Y^{r}}{a_{2}\beta}+ \dfrac{r(r-1)X^r}{a_2\beta}+ \dfrac{r(r-1)H(X,Y)}{a_{2}\beta}\right]\nonumber\\
     &-& \left[g_{1,1}^{0},  \dfrac{r^{2}Y^{r}}{a_{2}\beta}+ \dfrac{r(r-1)X^r}{a_2\beta}+ \dfrac{r(r-1)H(X,Y)}{a_{2}\beta}\right]. \label{T-f2}
     \end{eqnarray}

   We now compute $T^+f_{0}$.  
\begin{eqnarray*}
   T^+f_0 &=& \left[g_{1,0}^0, \dfrac{-H(X,2Y)}{2a_{2}\beta} - \dfrac{r(r-1)(X^r+F(X,2Y))}{a_2\beta} + \dfrac{2\delta(r)X^{r-1}Y}{a_2\beta}\right]\\
   &+& \left[g_{1,1}^{0}, \dfrac{-H(X,2Y-X)}{2a_{2}\beta} - \dfrac{r(r-1)(X^r+F(X,2Y-X))}{a_2\beta} +\dfrac{\delta(r)(2X^{r-1}Y- X^r)}{a_2\beta}\right]  
\end{eqnarray*}
We note from Lemma \ref{F}, that the coefficients of monomials in $F(X,2Y)$ and $F(X,2Y-X)$ computed$\mod 2^{t+2}$ have valuation at least 1. Hence both $\dfrac{r(r-1)}{a_2\beta}F(X,2Y)$ and $\dfrac{r(r-1)}{a_2\beta}F(X,2Y-X)$ vanishes$\mod \wp$. Even though  we have $\delta(r)=1$ when $r$ is even, the extra term $\dfrac{2X^{r-1}Y}{a_{2}\beta}$ still vanishes as $\dfrac{2}{a_{2}\beta}$ has positive valuation. Thus we have,
\begin{eqnarray} \label{T+f0odd}
  T^+f_0 &\equiv& 
  \left[g_{1,0}^0, \dfrac{-rX^{r-1}Y - r(r-1)X^{r-2}Y^2}{a_{2}\beta} - \dfrac{r(r-1)X^r}{a_{2}\beta} \right] \\
   &+&\left[g_{1,1}^{0}, \dfrac{rX^{r-1}Y-r(r-1)X^{r-2}Y^{2}}{a_2\beta}-\dfrac{r(r-1)X^r}{a_2\beta}  \right]\nonumber \mod \wp. 
\end{eqnarray}
It follows from the definition of $\beta$ and Lemma \ref{Lemmatomt} that the expressions for both $T^+f_0$ and $T^{-}f_{2}$ above vanish modulo $\wp$ when $r$ is even. 

On adding the congruences in \eqref{a2f1odd}, \eqref{T-f2odd} and \eqref{T+f0odd}, making the obvious cancellations and collecting the like terms, we get, 
\begin{eqnarray}\label{rad1}
T^+f_{0}-a_2f_1+T^-f_2 &\equiv&\left[ g_{1,0}^{0}, \dfrac{(a_2^2-2r + 2r^2(r-1))}{2a_2\beta}X^{r-1}Y + \dfrac{r(r-1)^2}{a_2\beta}X^{r-2}Y^2 - \dfrac{(a_2^2-2r^2)}{2a_2\beta}Y^r \right]\nonumber\\
&+ &\left[ g_{1,1}^{0}, \dfrac{(-a_2^2+2r - 2r^2(r-1))}{2a_2\beta}X^{r-1}Y + \dfrac{-r(r-1)^2}{a_2\beta}X^{r-2}Y^2 + \dfrac{(a_2^2-2r^2)}{2a_2\beta}Y^r \right]\nonumber\\
&\equiv&  \left[ g_{1,0}^{0}, \dfrac{(a_2^2-2r^2 + 2r(r^2-1))}{2a_2\beta}X^{r-1}Y - \dfrac{\alpha'}{\beta} Y^r \right]\nonumber\\
&&\enspace + \left[ g_{1,1}^{0}, \dfrac{(-a_2^2+2r^2 - 2r(r^2-1))}{2a_2\beta}X^{r-1}Y + \dfrac{\alpha'}{\beta} Y^r \right]\nonumber\\
&\equiv& \left[ g_{1,0}^{0}, \dfrac{\alpha'}{\beta}X^{r-1}Y  \right]
+ \left[ g_{1,1}^{0}, -\dfrac{\alpha'}{\beta}X^{r-1}Y  \right] \mod \wp,X_{r}+V_{r}^{(1)}.
\end{eqnarray} 

\noindent\textbf{\underline{Radius $2$}}\\
Assuming $r>3$ and using Lemma \ref{F}\ref{Fc},
 we have 
 \begin{eqnarray}
   T^-f_{3} 
 \equiv
   \left[g_{2,0}^0, \dfrac{2r^2Y^r}{a_2^2\beta} \right] -\left[g_{2,1}^{0},  \dfrac{2r^2Y^r}{a_2^2\beta} \right]\mod\wp
   \label{T-f3}   
\end{eqnarray}
 \begin{eqnarray}\label{a2f2}
   -a_2f_{2} &=& \left[g_{2,0}^0,\enspace \dfrac{-rF(X,Y)}{\beta}\right]-\left[g_{2,2}^0, \enspace \dfrac{r(r-1)F(X,Y)}{\beta}\right]+\left[g_{2,1}^0,\enspace \dfrac{rF(X,Y)}{\beta}\right]\nonumber\\ & &+\left[g_{2,3}^0,\enspace \dfrac{r(r-1)F(X,Y)}{\beta}\right], 
 \end{eqnarray}
\begin{eqnarray*}
T^+f_{1}&=&  \left[g_{2,0}^0,\enspace \dfrac{F(X,2Y)}{2\beta}+\dfrac{r(r-1)H'(X,2Y)}{a_{2}^2\beta}\right] -\left[g_{2,1}^0,\enspace \dfrac{F(X,2Y)}{2\beta}+\dfrac{r(r-1)H'(X,2Y)}{a_{2}^2\beta}\right]\\
&+ &\left[g_{2,2}^0, \dfrac{F(X,2Y-X)}{2\beta}+\dfrac{r(r-1)H'(X,2Y-X)}{a_{2}^2\beta}\right] \\ 
&-&\left[g_{2,3}^0, \dfrac{F(X,2Y-X)}{2\beta}+\dfrac{r(r-1)H'(X,2Y-X)}{a_{2}^2\beta}\right]\mod\wp.
 \end{eqnarray*}
 Applying Lemma \ref{F} and \ref{hprime}, we get that,
 \begin{eqnarray}\label{T+f1}
 T^+f_{1} &\equiv& \left[g_{2,0}^0,\enspace \dfrac{-X^{r-1}Y}{\beta}\right] -\left[g_{2,1}^0,\enspace \dfrac{-X^{r-1}Y}{\beta}\right]
+ \left[g_{2,2}^0, \dfrac{(r-1)X^{r-1}Y-r(r-1)X^{r-2}Y^2}{\beta}\right] \nonumber \\
 &- &\left[g_{2,3}^0, \dfrac{(r-1)X^{r-1}Y-r(r-1)X^{r-2}Y^2}{\beta}\right]
\end{eqnarray}

On adding congruences \eqref{T-f3},\eqref{a2f2} and \eqref{T+f1}, we can immediately see that, 
\begin{eqnarray}\label{sumrad2}
    T^{-}f_3-a_2f_2+T^+f_{1} &\equiv& 
    \left[g_{2,0}^0, \dfrac{r-1}{\beta}X^{r-1}Y + \dfrac{r(2r-a_2^2)Y^r}{a_2^2\beta}\right]- \left[g_{2,1}^0, \dfrac{r-1}{\beta}X^{r-1}Y + \dfrac{r(2r-a_2^2)Y^r}{a_2^2\beta}\right] \nonumber\\   
&- &\left[g_{2,2}^0, \dfrac{r-1}{\beta}\left(rY^{r}-(r+1)X^{r-1}Y+rX^{r-2}Y^2\right)\right] \\
& +& \left[g_{2,3}^0, \dfrac{r-1}{\beta}\left(rY^{r}-(r+1)X^{r-1}Y+rX^{r-2}Y^2\right)\right].\nonumber
\end{eqnarray}
Further simplifying using Lemma \ref{rem1} and Lemma \ref{Lemmatomt} we get modulo $\wp,X_{r}+V_{r}^{(1)}$, $T^{-}f_3-a_2f_2+T^+f_{1}$ is 
\begin{eqnarray*}
     &\equiv& \begin{cases}
0& \text{ if }\tau'<t\\
   \left [g_{2,0}^{0}, \dfrac{X^{r-1}Y}{r} \right]-\left[g_{2,1}^{0}, \dfrac{X^{r-1}Y}{r}\right] -\left[g_{2,2}^{0},  X^{r-2}Y^{2}\right]  +\left[g_{2,3}^{0},  X^{r-2}Y^{2}\right]    &\text{ if }\ \tau'\geq t
    \end{cases}\\
    &\equiv & 
    \begin{cases}
0 & \text{ if }\tau'<t\\
   \left [g_{2,0}^{0}, X^{r-1}Y \right]+\left[g_{2,1}^{0}, X^{r-1}Y\right] +\left[g_{2,2}^{0},  X^{r-2}Y^{2}\right]  +\left[g_{2,3}^{0},  X^{r-2}Y^{2}\right]  &\text{ if }\ \tau'\geq t.
    \end{cases}
\end{eqnarray*}

\noindent\textbf{\underline{Radius $n\geq3$}}\\
Here we use Lemma \ref{F} repeatedly to compute the part of $(T-a_2)f$ supported on the $n$-th radius of the Bruhat-Tits tree, for a general $n\geq 3$. 

We first compute $T^+f_{2}$. Note from Lemma \ref{F} that the coefficients of monomials in $F(X,2Y-X)$ are of valuation greater than or equal to $t+1$ for $r$ both odd and even. Hence the corresponding branches of $T^{+}f_2$ vanishes$\mod \wp$ as $v(a_2\beta)< t+1$. Therefore, 
\begin{eqnarray}
    T^+f_{2} &\equiv& \left[g_{3,0}^{0}, \dfrac{rF(X,2Y)}{a_{2}\beta}\right] + \left[g_{3,2}^{0}, \dfrac{r(r-1)F(X,2Y)}{a_{2}\beta}\right]\nonumber \\
    && -\left[g_{3,1}^{0}, \dfrac{rF(X,2Y)}{a_{2}\beta}\right] - \left[g_{3,3}^{0}, \dfrac{r(r-1)F(X,2Y)}{a_{2}\beta}\right]\nonumber\\
    &\equiv& \left[g_{3,0}^{0}, \dfrac{-2rX^{r-1}Y}{a_{2}\beta}\right] - \left[g_{3,1}^{0}, \dfrac{-2rX^{r-1}Y}{a_{2}\beta}\right] \mod \wp.\label{T^+f_2}
\end{eqnarray}
With a simpler computation for $n\geq 4$, the above congruence modulo $\wp$ can be generalised  to all $n\geq 3$ as
\begin{eqnarray}\label{T^+f_{n-1}}
T^{+}(f_{n-1}) &\equiv& \frac{1}{\beta}\left(\dfrac{2r}{a_{2}}\right)^{n-2}\cdot\left(-\left[g_{n,0}^{0}, X^{r-1}Y\right] + \left[g_{n,1}^{0}, X^{r-1}Y\right]\right).
\end{eqnarray}
Similarly we compute (modulo $\wp$) for all $n\geq 3$,
\begin{eqnarray}\label{a_2f_n}
 -a_{2}(f_{n}) &\equiv& \frac{1}{\beta}\left(\frac{2r}{a_{2}}\right)^{n-2}\cdot\left(-\left[g_{n,0}^{0},\: rF(X,Y)\right] + \left[g_{n,1}^{0},\: rF(X,Y)\right]\right) .  
\end{eqnarray}
\begin{eqnarray}\label{T^-f_{n+1}}
T^{-}(f_{n+1}) &\equiv& \frac{1}{\beta}\left(\frac{2r}{a_{2}}\right)^{n-2}\cdot\left(\left[g_{n,0}^0, \left(\frac{2r^2}{a_{2}^2}\right)Y^r \right] - \left[g_{n,1}^0, \left(\frac{2r^2}{a_{2}^2}\right)Y^r \right]\right).
\end{eqnarray}

 Combining \eqref{T^+f_{n-1}}, \eqref{a_2f_n} and \eqref{T^-f_{n+1}} we get,
 \begin{eqnarray*}
 T^+f_{n-1}-a_{2}f_{n}+T^{-}f_{n+1}& \equiv & \left(\frac{2r}{a_{2}}\right)^{n-2}\cdot\left[g_{n,0}^{0},\: \dfrac{-r(a_{2}^2-2r)}{a_{2}^2\beta}Y^r + \frac{(r-1)}{\beta}X^{r-1}Y\right]\\
 & & +\left(\frac{2r}{a_{2}}\right)^{n-2}\cdot\left[g_{n,1}^{0}, \:\dfrac{r(a_{2}^2-2r)}{a_{2}^2\beta}Y^r - \dfrac{(r-1)X^{r-1}Y}{\beta}\right]\\
 & \equiv& 0 \mod \wp, X_{r}+V_{r}^{(1)}.
 \end{eqnarray*}
Since $n \geq 3$,  the factor $\left(\frac{2r}{a_{2}}\right)^{n-2}$ has positive valuation. Further by equation \eqref{beta} and Lemma \ref{rem1} respectively, both $\frac{r-1}{\beta}$ and $\frac{r(a_{2}^2-2r)}{a_{2}^2\beta}$   are integral. Hence the  last congruence follows.

\noindent\textbf{\underline{Radius $-1$}}\\
We use Lemma \ref{F} and Lemma \ref{H} to arrive at the following congruence relations.
\begin{eqnarray} \label{T-f0}
T^{-}f_{0}&=&\left[g_{0,0}^{1}, -\dfrac{H(2X,Y)}{2a_2\beta}-\dfrac{r(r-1)(2^rX^r+F(2X,Y))}{a_2\beta} + \dfrac{\delta(r)2^{r-1}X^{r-1}Y}{a_{2}\beta}\right]\nonumber\\
&\equiv& 
 \left[g_{0,0}^{1}, \dfrac{-rXY^{r-1}-r(r-1)X^2Y^{r-2}}{a_{2}\beta}-\dfrac{r(r-1)Y^r}{a_{2}\beta}\right]\mod\wp. 
\end{eqnarray}

Using the definition of $H''(X,Y)$, we compute
\begin{equation}\label{a2f-1}
    -a_2f_{-1}= 
        \left[g_{0,0}^1, \dfrac{-a_2G(X,Y)}{2\beta} - \dfrac{r(r-1)H(X,Y)}{a_{2}\beta}+ \dfrac{r^2(r-1)(XY^{r-1}+X^2Y^{r-2})}{a_2\beta}\right] 
    \end{equation}
Note that now  we are analyzing the action of the Hecke operator on the part of $f$ supported on the left side of the tree. 
For this we apply Lemma \ref{T+toT-}, to reduce the computations in the LHS of the tree to equivalent ones already studied in the RHS. For example, we have
\begin{eqnarray*}
    T^-(f_{-2})&=& T^-\left[g_{1,0}^1,\enspace \dfrac{rG(X,Y)}{a_{2}\beta}\right]+ T^-\left[g_{1,1}^1,\enspace \dfrac{r(r-1)G(X,Y)}{a_2\beta}\right]\\
    &=&  w\cdot T^-\left[g_{2,0}^0,\enspace \dfrac{rF(X,Y)}{a_{2}\beta}\right]+ \begin{pmatrix}
        0 & 1\\
        1 & -1
    \end{pmatrix}\cdot T^-\left[g_{2,3}^{0},\enspace \dfrac{r(r-1)F(X,Y)}{a_2\beta}\right]
   \end{eqnarray*}
by equations \eqref{T+toT-c} and \eqref{T+toT-d} of Lemma \ref{T+toT-}.  We remark that $T^-$ of the elementary functions above have been already dealt with while computing (part of) $T^-f_2$. Using those computations and using the matrix equality $w\cdot g_{1,0}^{0}= 
    \begin{psmallmatrix}
    0 & 1\\
    1 & -1
    \end{psmallmatrix}\cdot g_{1,1}^{0} = g_{0,0}^{1}\cdot w.$  
    we get
\begin{eqnarray}\label{T-f-2}
    T^{-}f_{-2}&
    \equiv & w\cdot \left[g_{1,0}^{0}, \dfrac{rY^{r}}{a_{2}\beta}\right] + \begin{pmatrix}
        0 & 1\\
        1 & -1
\end{pmatrix}\cdot\left[g_{1,1}^{0}, \dfrac{r(r-1)(X+Y)^r}{a_{2}\beta}\right] \nonumber\\
    &\equiv & \left[g_{0,0}^{1}, \dfrac{rX^{r}+r(r-1)(Y+X)^r}{a_{2}\beta}\right] \nonumber\\
 &\equiv &\left[g_{0,0}^{1}, \dfrac{r^{2}X^{r}}{a_{2}\beta}+ \dfrac{r(r-1)Y^r}{a_2\beta}+ \dfrac{r(r-1)H(X,Y)}{a_{2}\beta}\right] \mod\wp.
\end{eqnarray}
Now we note that the polynomial expressions in equations \eqref{T-f0}, \eqref{a2f-1} and \eqref{T-f-2} have already appeared in the calculations of radius 1 with $X$ and $Y$ swapped. Hence their sum can be obtained by swapping $X$ and $Y$ in the polynomial expression of \eqref{rad1}:
\begin{eqnarray*}
    T^-f_0-a_2f_{-1}+T^-f_{-2} & \equiv&  
        \left[g_{0,0}^{1}, \dfrac{-a_2^2+2r^2}{2a_2\beta}X^r + \dfrac{a_{2}^2-2r+2r^2(r-1)}{2a_2\beta}XY^{r-1}+\dfrac{r(r-1)^2}{a_2\beta}X^2Y^{r-2}\right]\\
       &\equiv& \left[g_{0,0}^1, -\dfrac{\alpha'}{\beta}X^r + \dfrac{\alpha'}{\beta}XY^{r-1}\right] \equiv \left[g_{0,0}^{1},\dfrac{\alpha'}{\beta}XY^{r-1}\right]\mod \wp,X^{r}+V_{r}^{(1)}.
\end{eqnarray*}

\noindent\textbf{\underline{Radius $-n,\: n\geq 2$}}

We have checked  that 
\begin{eqnarray}\label{conga2}
-a_{2}f_{-n} 
&\equiv & \begin{cases}
-\dfrac{r}{\beta}\cdot \left( \left[g_{1,0}^{1}, G(X,Y)   \right] + \left[g_{1,1}^{1}, (r-1)G(X,Y) \right] \right)&\text{ if } n=2\\
-\dfrac{r}{\beta}\left(\dfrac{2r}{a_2}\right)^{n-2}\cdot\left[g_{n-1,0}^1,\,G(X,Y)\right] &\text{ if } n\geq3.

\end{cases}
\end{eqnarray} 
Further for $n\geq 2$ and $r\geq 4$, we have
\begin{eqnarray*}\label{congT^-}
    T^-f_{-(n+1)}& =& T^-\left[g_{n,0}^{1}, \left(\dfrac{2r}{a_2}\right)^{n-1}\dfrac{rG(X,Y)}{a_{2}\beta}\right]
    \equiv w\cdot T^-\left[g_{n+1,0}^{0}, \left(\dfrac{2r}{a_2}\right)^{n-1}\dfrac{rF(X,Y)}{a_{2}\beta}\right]\nonumber\\
    &\equiv&  \left[w \cdot g_{n,0}^{0}, \left(\dfrac{2r}{a_2}\right)^{n-1}\dfrac{rY^{r}}{a_{2}\beta}\right]
    \equiv \left[g_{n-1,0}^{1} \cdot w, \left(\dfrac{2r}{a_2}\right)^{n-1}\dfrac{rY^{r}}{a_{2}\beta}\right]\nonumber\\
      &\equiv&  \left[g_{n-1,0}^{1}, \left(\dfrac{2r}{a_2}\right)^{n-1}\dfrac{rX^{r}}{a_{2}\beta}\right]\mod\wp.
\end{eqnarray*}
Similarly for $n\geq 3$, one computes using Lemma \ref{T+toT-},
\begin{eqnarray}\label{congT^+}
     T^+f_{-(n-1)} &\equiv &\left[g_{n-1,0}^1, -\left(\dfrac{2r}{a_2}\right)^{n-2}\cdot\dfrac{XY^{r-1}}{\beta}\right]\mod\wp,
\end{eqnarray}
with slightly different argument in the case $n=3$. Applying Lemma \ref{T+toT-} again and using part of the computation done for $T^+f_1$ (see \eqref{T+f1}), we have
\begin{eqnarray*}\label{T+f-1}
    T^{+}f_{-1} 
   &=& w\cdot T^{+}\left[g_{1,0}^0,  \dfrac{F(X,Y)}{2\beta} + \dfrac{r(r-1)H'(X,Y)}{a_{2}^2\beta}\right] \nonumber\\
&\equiv & -\left[g_{1,0}^1, \dfrac{XY^{r-1}}{\beta}\right] 
+ \left[g_{1,1}^1, \dfrac{(r-1)XY^{r-1}-r(r-1)X^2Y^{r-2}}{\beta}\right]\mod \wp.
\end{eqnarray*}
The last congruence follows since $w\cdot g_{2,0}^{0}=g_{1,0}^{1}\cdot w \text{ and } w\cdot g_{2,2}^{0} = g_{1,1}^{0}\cdot w$. Combining the above two congruences, we have 
\begin{eqnarray}
    T^{+}f_{-(n-1)} & \equiv & \begin{cases}
        -\left[g_{1,0}^1, \dfrac{XY^{r-1}}{\beta}\right] 
+ \left[g_{1,1}^1, \dfrac{(r-1)XY^{r-1}-r(r-1)X^2Y^{r-2}}{\beta}\right] , & n=2 \\
-\left(\dfrac{2r}{a_2}\right)^{n-2}\cdot\left[g_{n-1,0}^1, \dfrac{XY^{r-1}}{\beta}\right]\mod\wp, &n\geq 3
\end{cases}
\end{eqnarray}
Adding the congruences
\eqref{conga2}, \eqref{congT^-} and \eqref{T+f-1} for $n=2$ and emulating a part of the calculation at radius $2$ (see equation \eqref{sumrad2}), we get that
\begin{eqnarray*}
T^+f_{-1}+T^-f_{-3}-a_{2}f_{-2} &\equiv &\left[g_{1,0}^{1}, \frac{r(2r-a_{2}^2)X^{r}}{a_{2}^2\beta} +\frac{(r-1)XY^{r-1}}{\beta} \right] \\
&& -\left[g_{1,1}^{1}, \frac{r-1}{\beta}\left(rX^r -(r+1)XY^{r-1}+rX^{2}Y^{r-2}\right) \right]\\
    &\equiv& \begin{cases}
0 & \text{ if } \tau'<t\\
     \left[g_{1,0}^{1}, XY^{r-1}\right] + \left[g_{1,1}^{1},  X^2Y^{r-2} \right]\mod \wp,X_{r}+V_{r}^{(1)} & \text{ if } \tau'\geq t.
    \end{cases}
    \end{eqnarray*}

If $n\geq3$, the calculation is simpler and we have
\begin{eqnarray*}
T^+f_{-(n-1)}+T^-f_{-(n+1)}-a_{2}f_{-n} &\equiv& \left(\dfrac{2r}{a_2}\right)^{n-2}\cdot\left[g_{n-1,0}^1, \frac{r(2r-a_{2}^2)X^{r}}{a_{2}^2\beta}+\frac{(r-1)XY^{r-1}}{\beta}\right] \\
&\equiv& 0 \mod \wp,X_{r}+V_{r}^{(1)}.
\end{eqnarray*}



Thus we observe that the part of $(T-a_2)f $ supported on each radius is integral. Further going modulo $\wp,X_{r}$ and $V_{r}^{(1)}$, the parts supported on radius $n\geq 3$ or $\leq -3$ vanish. 
To be explicit, we have  
\begin{enumerate}
\item  
 If $\tau'<t$, then
 $$\overline{(T-a_{2})f} \equiv \left[ g_{1,0}^{0}, X^{r-1}Y \right]
+ \left[ g_{1,1}^{0}, -X^{r-1}Y\right] +\left[g_{0,0}^1, XY^{r-1}\right],$$
which further maps to $\left[ g_{1,0}^{0}, 1 \right]
+ \left[ g_{1,1}^{0}, 1\right] +\left[g_{0,0}^1, 1\right]
     =T[1,1] \in \mathcal{I}(V_{0})$ under the map in Lemma \ref{VrtoV0}.
     \hspace{20mm}
\item If  $\tau'\geq t$, then \begin{eqnarray*}
\overline{(T-a_{2})f} &\equiv& \overline{\frac{\alpha'}{\beta}} \cdot \left(\left[g_{1,0}^{0}, X^{r-1}Y\right] + \left[g_{1,1}^{0}, X^{r-1}Y\right] + \left[g_{0,0}^{1}, XY^{r-1}\right]\right) 
+\left[g_{2,0}^{0}, X^{r-1}Y  \right] \\
&& + \left[g_{2,2}^{0},X^{r-2}Y^2  \right] 
+ \left[g_{2,1}^{0},X^{r-1}Y  \right]
+\left[g_{2,3}^{0},X^{r-2}Y^2 \right] \\
&& + \left[g_{1,0}^{1}, XY^{r-1} \right] + \left[g_{1,1}^{1}, X^{2}Y^{r-2}  \right],  
\end{eqnarray*}
which by Lemma \ref{VrtoV0} further maps to 
\begin{eqnarray*}
\left[g_{2,0}^{0}, 1 \right] + \left[g_{2,2}^{0},1 \right] + \left[g_{2,1}^{0},1  \right]
+\left[g_{2,3}^{0},1\right] + \left[g_{1,0}^{1}, 1 \right] + \left[g_{1,1}^{1}, 1\right]&&\\ + c\left(\left[g_{1,0}^{0},1\right] + \left[g_{1,1}^{0}, 1\right]+\left[g_{0,0}^1, 1\right] \right) 
= \left(T^2 + cT + 1 \right)\left[1,1 \right] &\in& \mathcal{I}(V_{0}), 
\end{eqnarray*}
where $c = \overline{\frac{\alpha'}{\beta}}=\overline{\frac{\alpha'}{r(r-1)}} = \overline{\frac{\alpha'}{r-1}}$, since $r$ is odd by Lemma \ref{Lemmatomt}.
\end{enumerate} 
\hspace{20mm}
\\
By the $G$-linearity of $T$,  the surjection $P$ factors via $\dfrac{\mathcal{I}(V_{0})}{(T)}= \pi(0,0,1)$ when $\tau'<t$ and when  $\tau' \geq t$, it factors via  $\dfrac{\mathcal{I}(V_{0})}{\left(T^2+ cT + 1 \right)} \cong \dfrac{\mathcal{I}(V_{0})}{T-\lambda} \oplus \dfrac{\mathcal{I}(V_{0})}{T-\lambda^{-1}} \cong \pi(0,\lambda,1) \oplus \pi(0,\lambda^{-1},1)$ where $\lambda$ is such that $\lambda^{2}-c\lambda +1=0$.

For both $r=2$ and $r=3$, the action of $T$ on $F(X,Y)$ and $H'(X,Y)$ is  different from higher values of $r$ as can be seen from Lemma \ref{F} and Lemma \ref{H'}. Hence $\left(T-a_{2}\right)f$ has to be computed separately for these cases. We note that the function $f$ becomes much simpler for $r=2$ and $r=3$. If $r=2$, then $H(X,Y), H'(X,Y)$ and $H''(X,Y)$ simplify to monomials and further all components of $f$ vanish modulo $\wp$ except for $f_{1}$ and $f_{-1}$. As a consequence, the calculation simplifies and one checks that $\overline{(T-a_{2})f}$ maps to $T[1,1]$ in $\mathcal{I}(V_{0})$. For $r=3$, analysing the function $f$ we note that $H(X,Y)= 3(X^2Y+XY^2)$ and both $H'(X,Y)$ and $H''(X,Y)$ are zero. Applying Lemma \ref{F} and Lemma \ref{H'}, we see that $\overline{(T-a_{2})f}$ maps to $T[1,1]$ if $\tau'<t$ and to $\left(T^2+cT+1\right)[1,1]$ if $\tau'\geq t$. Thus any change brought in by the differed action of $T$ gets nullified$\mod \wp$ and the result for $r\geq4$ can be extended to include $r=2$ and $3$.
\end{proof}
Now applying the mod $p$ local Langlands correspondence for $p=2$, we have the following corollary with $c \in \bar{\F}_{2}$ as in Theorem \ref{irred}.
\begin{cor}
   If $k \geq 4$ and $0<v(a_{2})<1$, then $$\bar{V}_{k,a_2} \cong\begin{cases}
\ind(\omega_{2}) &\text{ if } \tau'<t\\
\mu_{\lambda} \oplus \mu_{\lambda^{-1}} &\text{ if } \tau'\geq t.
\end{cases}$$ where $t$ and $\tau'$ are as in the above theorem and $\lambda \in \bar{\F}_{2}$ is such that  $\lambda^{2}-c\lambda +1=0$. 
\end{cor}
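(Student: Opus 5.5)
We will deduce the corollary from Theorem \ref{irred} together with the mod-$2$ local Langlands correspondence and the compatibility recalled in \S2. Recall that $\bar V_{k,a_2}$ corresponds to $\bar\Theta^{ss}_{k,a_2}$, so it suffices to identify $\bar\Theta^{ss}$ as one of the representations on the automorphic side.

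\textbf{Surjectivity.} The map $P:\mathcal{I}(V_r)\to\bar\Theta$ is surjective, since $\mathcal{I}(\Sym^r\bar\Z_2^2)$ surjects onto $\Theta_{k,a_2}$. By Lemma \ref{RemarkBG09} and Proposition \ref{VrtoV0} it factors through $\mathcal{I}(V_0)$, and Theorem \ref{irred} factors it further.

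\textbf{The case $\tau'<t$.} Here we obtain a surjection $\pi(0,0,1)=\mathcal{I}(V_0)/(T)\twoheadrightarrow\bar\Theta$. Since $\pi(0,0,1)$ is irreducible (supercuspidal) and $\bar\Theta\neq 0$ (it is the reduction of a nonzero lattice), this map is an isomorphism. Hence $\bar\Theta^{ss}\cong\pi(0,0,1)$, and the correspondence gives $\bar V_{k,a_2}\cong\ind(\omega_2)$.

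\textbf{The case $\tau'\ge t$.} We have $\mathcal{I}(V_0)/(T^2+cT+1)$. Over characteristic $2$, $T^2+cT+1=T^2-cT+1=(T-\lambda)(T-\lambda^{-1})$. If $\lambda\neq\lambda^{-1}$, the Chinese remainder theorem splits this as $\pi(0,\lambda,1)\oplus\pi(0,\lambda^{-1},1)$. If $\lambda=\lambda^{-1}$, i.e. $c=0$, $\lambda=1$, there is only a filtration with both graded pieces equal to $\pi(0,1,1)$. In either case, $\bar\Theta^{ss}$ is a quotient of a representation whose semisimplification consists of the factors of $\pi(0,\lambda,1)^{ss}$ and $\pi(0,\lambda^{-1},1)^{ss}$.

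\textbf{Main obstacle.} The difficulty is ruling out a proper quotient. By Breuil's theory (\cite{[Br03]}, Berger), $\overline{B(V_{k,a_2})}$ must lie in the image of the mod-$2$ LLC. So $\bar\Theta^{ss}$ is either $\pi(0,0,\eta)$ for some $\eta$, or of the form $\pi(0,\mu,\eta)^{ss}\oplus\pi(0,\mu^{-1},\eta)^{ss}$.

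A supercuspidal cannot occur as a Jordan–Hölder factor of principal series, which rules out the first option. Among the second option, the only one whose factors are contained in the available ones is the full $\pi(0,\lambda,1)^{ss}\oplus\pi(0,\lambda^{-1},1)^{ss}$. We match factors by central character and Hecke eigenvalue, using Barthel–Livné's description of reducible factors when $\lambda=\pm1$ with the twists by $\mu_{\pm1}\circ\det$.

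Thus $\bar\Theta^{ss}\cong\pi(0,\lambda,1)^{ss}\oplus\pi(0,\lambda^{-1},1)^{ss}$, and injectivity of the correspondence yields $\bar V_{k,a_2}\cong\mu_\lambda\oplus\mu_{\lambda^{-1}}$, with $\lambda^2-c\lambda+1=0$.
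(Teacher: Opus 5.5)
Your proposal is correct and follows the same route as the paper: Theorem \ref{irred} factors the surjection $P$ through $\mathcal{I}(V_0)/(T)$ or $\mathcal{I}(V_0)/(T^2-cT+1)$, and the mod-$2$ local Langlands correspondence together with Berger's compatibility then determines $\bar V_{k,a_2}$. Your two additional steps make explicit points the paper leaves implicit: you rule out a proper quotient by matching Jordan--H\"older factors against the image of the correspondence, and you observe that when $c=0$ the Chinese-remainder splitting the paper asserts holds only after semisimplification (this case does occur, e.g.\ $r=3$, $a_2=\sqrt{2}$).
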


\begin{remark}
Using Lemma \ref{Lemmatomt} and the corollary above, we see that for slopes in $(0,1)$, the reduction $\bar V_{k,a_2}$ can be reducible only if $k$ is odd and $v(a_2)=1/2$. This proves corollary \ref{cor1} stated in the introduction.
\end{remark}

\section{slope $\nu=1$}\label{slope=1}
 In this section we shall treat the case of smallest positive integral slope $\nu= v(a_{2})=1$ for $p=2$. As explained in \S \ref{fil}, there is a natural map $P: \mathcal{I}(V_r) \to \bar\Theta:=\bar\Theta_{k,a_2}.$ By Lemma \ref{RemarkBG09}, the map $P$ further factors through $\mathcal{I}(V_r/V_r^{(2)})$. The following short exact sequences of $\bar{\F}_{2}[\Gamma]$-modules are consequences of Proposition \ref{VrtoV0} and Remark \ref{remVrVr1}.
\begin{eqnarray}
\label{ses1}0\rightarrow \frac{V_r^{(1)}}{V_r^{(2)}}\rightarrow\frac{V_r}{V_r^{(2)}}\rightarrow\frac{V_r}{V_r^{(1)}}\rightarrow 0, \\
\label{ses2}0\rightarrow V_1\rightarrow\frac{V_r}{V_r^{(1)}}\rightarrow V_0\rightarrow 0, \text{ for } r\geq2, \\
\label{ses3}0\rightarrow V_0\rightarrow \frac{V_r^{(1)}}{V_r^{(2)}}\rightarrow V_1\rightarrow 0, \text{ for } r\geq 5.
    \end{eqnarray}
    Though the latter two sequences are split, i.e.,
$\frac{V_r}{V_r^{(1)}}\cong\frac{V_r^{(1)}}{V_r^{(2)}}\cong V_0\oplus V_1$, we have written them in different order as we will use the explicit maps in that order.
Using \eqref{ses1}, \eqref{ses2} and \eqref{ses3} above, and exactness of the functor $\mathcal{I}$, we break $\bar\Theta$ into four components, $\sF_0$, $\sF_1$, $\sF_2$ and $\sF_3$ such that the following diagrams of $\bar\F_2[G]$-modules are commutative.
\begin{equation}\label{diag1}
\begin{tikzcd}
	0 \arrow[r] & \mathcal{I}\left(\dfrac{V_{r}^{(1)}}{V_{r}^{(2)}}\right) \arrow[d, " "]\arrow[r]& \mathcal{I}\left(\dfrac{V_{r}}{V_{r}^{(2)}}\right)  \arrow[r] \arrow[d, "P"] & \mathcal{I}\left(\dfrac{V_{r}}{V_{r}^{(1)}}\right) \arrow [r] \arrow[d, " "]& 0\\
	0 \arrow[r] & \bar{\Theta}_{0}\arrow[r] & \bar{\Theta} \arrow[r] & \bar{\Theta}_{1}\arrow[r] & 0
\end{tikzcd}
\end{equation}
\begin{equation}\label{diag2}
\begin{tikzcd}
	0 \arrow[r] & \mathcal{I}(V_1)\arrow[d, " "]\arrow[r]& \mathcal{I}\left(\dfrac{V_{r}}{V_{r}^{(1)}}\right)  \arrow[r] \arrow[d, "P"] & \mathcal{I}(V_0)\arrow [r] \arrow[d, " "]& 0\\
	0 \arrow[r] & \sF_0 \arrow[r] & \bar{\Theta}_1\arrow[r] & \sF_1\arrow[r] & 0
\end{tikzcd}
\end{equation}
\begin{equation}\label{diag3}
\begin{tikzcd}
	0 \arrow[r] & \mathcal{I} (V_0) \arrow[d, " "]\arrow[r] & \mathcal{I}\left(\dfrac{V_{r}^{(1)}}{V_{r}^{(2)}}\right)  \arrow[r] \arrow[d, "P"] & \mathcal{I} (V_1) \arrow [r] \arrow[d, " "]& 0\\
	0 \arrow[r] & \sF_2 \arrow[r] & \bar{\Theta}_0\arrow[r] & \sF_3\arrow[r] & 0
\end{tikzcd}
\end{equation}

We note that if slope is $1$, then $\mathcal{I}\left(X_{r}\right)\subset \ker(P)$ using Lemma \ref{RemarkBG09}. We will use polynomial $H(X,Y)$ introduced in \eqref{Hdef} in this context as well. Considering $H(X,Y)$ as a polynomial over $\bar{\F}_{2}$, we have the following lemma.
\begin{lemma}\label{X_r^*}
    The polynomial $H(X,Y)\in X_r\cap V_r^{(1)}$. If $r$ is odd, it maps to a non-zero element in $V_r^{(1)}/V_r^{(2)}$ that generates a trivial subspace $V_0$. If $r$ is even, then $H(X,Y)\in V_r^{(2)}$. 
\end{lemma}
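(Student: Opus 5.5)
Over $\bar{\F}_2$ we have $H(X,Y)=(X+Y)^r-X^r-Y^r$. The plan has three steps: show $H\in X_r$ from this identity, get membership in $V_r^{(1)}$ and $V_r^{(2)}$ from Lemma \ref{Vr*}, and locate the image of $H$ inside $V_r^{(1)}/V_r^{(2)}$ using Remark \ref{remVrVr1} and Proposition \ref{VrtoV0}.

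\emph{Membership in $X_r$.} Since $X_r=\langle X^r\rangle_K$, it contains $Y^r=w\cdot X^r$. It also contains $(X+Y)^r=\left(\begin{smallmatrix}1&0\\1&1\end{smallmatrix}\right)\cdot X^r$. Hence $H=(X+Y)^r-X^r-Y^r\in X_r$.

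\emph{Membership in $V_r^{(1)}$.} The coefficients of $H$ are $a_j={r\choose j}$ for $0<j<r$, together with $a_0=a_r=0$. Then
$$\sum_j a_j=2^r-2\equiv 0 \pmod 2,$$
so part (1) of Lemma \ref{Vr*} gives $H\in V_r^{(1)}$.

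\emph{The case $r$ even.} Here $a_1=a_{r-1}=r\equiv 0$. For the parity sums, $\sum_{j\ \mathrm{odd}}{r\choose j}=2^{r-1}$, which is even for $r\ge 2$. Also $\sum_{0<j<r,\ j\ \mathrm{even}}{r\choose j}=2^{r-1}-2$, which is even. So part (2) of Lemma \ref{Vr*} gives $H\in V_r^{(2)}$.

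\emph{The case $r$ odd, non-vanishing.} Now $a_1=r\equiv 1$, so part (2) of Lemma \ref{Vr*} fails and $H\notin V_r^{(2)}$. Its image in $V_r^{(1)}/V_r^{(2)}$ is therefore non-zero.

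\emph{The case $r$ odd, $K$-fixed line.} It remains to show this image spans a trivial $\Gamma$-subspace, i.e. a copy of $V_0$. My first attempt is a direct computation. Since $H\in X_r\cap V_r^{(1)}$, it is enough to check that $g\cdot H-H\in V_r^{(2)}$ for the generators $g=w$ and $g=\left(\begin{smallmatrix}1&1\\0&1\end{smallmatrix}\right)$ of $\Gamma$.
\begin{itemize}
\item For $w$: $H$ is symmetric under $X\leftrightarrow Y$, so $w\cdot H=H$ exactly.
\item For the unipotent: $\left(\begin{smallmatrix}1&1\\0&1\end{smallmatrix}\right)\cdot H=H(X,X+Y)=Y^r-X^r-(X+Y)^r$. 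Over $\bar{\F}_2$ this gives
$$\left(\begin{smallmatrix}1&1\\0&1\end{smallmatrix}\right)\cdot H-H=2Y^r-2(X+Y)^r=0.$$
\end{itemize}
So $H$ is in fact exactly $\Gamma$-fixed in $V_r$, which is stronger than needed.

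Consequently the image of $H$ in $V_r^{(1)}/V_r^{(2)}$ is a non-zero $\Gamma$-fixed vector and spans a copy of $V_0$. It remains only to match this with the sub $V_0$ in \eqref{ses3}. For this, write $H=\theta\cdot S$ with $S\in V_{r-3}$, and use Remark \ref{remVrVr1} together with Proposition \ref{VrtoV0}: the line spanned by $H$ should lie in the kernel of the projection to $V_1$. I expect this identification to be the main obstacle, since it requires writing out $S$ explicitly. If that is awkward, an alternative is to note that $V_1$ has no non-zero $\Gamma$-fixed vector (its only fixed points are $0$), so any fixed line in $V_0\oplus V_1$ lies in the $V_0$-summand.

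For small $r$, where $V_r^{(2)}=0$ or the filtration degenerates (e.g. $r<5$ in \eqref{ses3}), I will check the statement directly on explicit monomials.
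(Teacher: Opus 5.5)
Your proposal is correct and follows the paper's proof. The same identity $H=\left(\begin{smallmatrix}1&0\\1&1\end{smallmatrix}\right)\cdot X^r-X^r-w\cdot X^r$ gives $H\in X_r$, Lemma \ref{Vr*} gives the filtration statements, and the $\Gamma$-invariance check gives the trivial line; you make that check explicit, and it in fact holds already in $V_r$, since $H=X^r+Y^r+(X+Y)^r$ over $\bar\F_2$. Your worry about matching with the $V_0$ of \eqref{ses3} is not an obstacle, because a $\Gamma$-fixed vector maps to a fixed vector of $V_1$, hence to $0$, so it lies in that sub.
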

\begin{proof}
We note that $H(X,Y) = \begin{pmatrix}
	1 & 0\\
	1 & 1
\end{pmatrix} \cdot X^{r}- X^{r}-\begin{pmatrix}
	0 & 1\\
	1 & 0
\end{pmatrix} \cdot X^{r}$. Hence it belongs to $X_{r}$. Using Lemma \ref{Vr*}, we conclude that $H(X,Y)\in  V_r^{(1)}$ and further in $V_{r}^{(2)}$ if $r$ is even.  If $r$ is odd, then $H(X,Y)\notin V_{r}^{(2)}$ and thus maps to a non-zero element in $V_{r}^{(1)}/V_{r}^{(2)}$. We can check that all the six matrices of $\GL_{2}(\mathbb{F}_{2})$ act trivially on the image of $H(X,Y)$ in the above quotient. Therefore it generates trivial subspace $V_{0}$ of $V_r^{(1)}/V_r^{(2)}$.  \end{proof}

We shall now state a combinatorial lemma which shall be used extensively in the subsequent lemmas and propositions.

\begin{lemma}\label{sl1cmb1}
Let $r\geq 3$, and $t:= v(r-2)$. Then for all $j\geq 3$, we have $v\left({r \choose j}\right)+j \geq t+2$. 
\end{lemma}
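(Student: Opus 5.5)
We follow the template of Lemma \ref{CMB2}, now with $t=v(r-2)$. For $3\leq j\leq r$ we write
\[
j+v\left({r\choose j}\right)=j+\sum_{i=0}^{j-1}v(r-i)-v(j!)=S_2(j)+\sum_{i=0}^{j-1}v(r-i),
\]
using Legendre's formula $j-v(j!)=S_2(j)$, where $S_2(j)\geq 1$ is the binary digit sum. Since $j\geq 3$, the sum on the right contains the three terms $v(r)$, $v(r-1)$ and $v(r-2)=t$. All other terms are non-negative.

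We then split by the parity of $r$.

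\emph{Case $r$ odd.} Here $t=v(r-2)=0$. The term $v(r-1)$ is at least $1$ and $S_2(j)\geq1$, so the right-hand side is at least $2=t+2$.

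\emph{Case $r$ even.} The right-hand side is at least $S_2(j)+v(r)+v(r-2)+v(r-1)$, and $v(r-1)=0$. Among two consecutive even numbers $r$ and $r-2$, one is divisible by $2$ exactly once and the other by $4$. So if $t\geq 2$ then $v(r)=1$, and if $t=1$ then $v(r)\geq 2$. In both subcases $v(r)+t\geq t+1$, and adding $S_2(j)\geq 1$ gives the bound $t+2$. (Directly: $v(r)\geq1$ always holds, so $S_2(j)+v(r)+t\geq t+2$.)

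The only subtlety is making sure the index range $j\geq 3$ really includes the factor $r-2$, which is exactly why the hypothesis is $j\geq 3$. We should also note that for $j>r$ the binomial coefficient vanishes, so the statement is trivially true there (with $v(0)=\infty$), and for $3\leq j\leq r$ the computation above applies. I expect no real obstacle; the argument is a direct adaptation of Lemma \ref{CMB2}.
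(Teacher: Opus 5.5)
Your proof is correct and follows the paper's approach: for $3\le j\le r$ the paper also applies Legendre's formula to get $v\left({r\choose j}\right)+j=\sum_{i=0}^{j-1}v(r-i)+S_2(j)\ge v(r)+v(r-1)+v(r-2)+1$, and it treats $j>r$ as trivial. Your parity split only makes explicit that $v(r)+v(r-1)\ge 1$, so the mod-$4$ subcase discussion for even $r$ is unnecessary, as your parenthetical already notes.
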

\begin{proof}
By applying Legendre's formula, if $ j\leq r$, we have
\begin{equation}\label{sl1cmb1eq1}
        v\left({r \choose j}\right)+ j = \left(\sum_{i=0}^{j-1}v(r-i)\right)+j-v(j!)
        = \sum_{i=0}^{j-1}v(r-i)+S_{2}(j)
\end{equation}
where $S_{2}(j)$ is the sum of the coefficients in the $2$-adic expansion of $j$. 
Clearly $S_{2}(j)\geq1$. Thus  for all $3\leq j\leq r$, we have
\begin{eqnarray*}
v\left({r \choose j}\right)+j\geq v(r)+v(r-1)+v(r-2)+1\geq t+2.
\end{eqnarray*}
If $j>r$ we note that ${r\choose j}=0$ and hence the inequality trivially holds,  proving the lemma.
\end{proof}


\begin{lemma}\label{Heven}
Let $r\geq 3$ and $t=v(r-2)$. Then modulo $2^{t+2}$, we have, 
\begin{enumerate}
	\item $H(X,2Y) \equiv 
    \begin{cases}
    2rX^{r-1}Y + 2rX^{r-2}Y^2 &\text{ if }\, r \text{ is even}\\
    2rX^{r-1}Y &\text{ if }\, r \text{ is odd}    
    \end{cases}
    $ \label{Heven1}
	\item $H(X,2Y-X) \equiv \begin{cases}
-2X^{r}+2rX^{r-1}Y-2rX^{r-2}Y^2 & \text{ if }\, r \text{ is even,}\\
 -2rX^{r-1}Y & \text{ if }\, r \text{ is odd.}
    \end{cases}$    
    \label{Heven2}
	\item $H(2X,Y) \equiv \begin{cases}
2rXY^{r-1} + 2rX^2Y^{r-2} \text{ if }\, r \text{ is even,}\\
2rXY^{r-1} \text{ if }\, r \text{ is odd.}
	\end{cases}$ \label{Heven3}
\end{enumerate}
\end{lemma}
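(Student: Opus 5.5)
Expand each substitution binomially and use Lemma \ref{sl1cmb1} to discard every term of index $j\geq 3$ modulo $2^{t+2}$, exactly as in the proof of Lemma \ref{H}. What remains is at most three monomials, whose coefficients are simplified according to the parity of $r$. The key arithmetic inputs are these. If $r$ is odd, then $t=v(r-2)=0$, so we work modulo $4$ and $2^2\binom{r}{2}=2r(r-1)\equiv 0 \bmod 4$. If $r$ is even, then $r(r-1)\equiv r\bmod 2^{t+1}$, since $r(r-1)-r=r(r-2)$ has valuation at least $1+t$ (because $v(r)\geq 1$). Hence $2r(r-1)\equiv 2r\bmod 2^{t+2}$.

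For part \ref{Heven1}, write $H(X,2Y)=\sum_{0<j<r}2^j\binom{r}{j}X^{r-j}Y^j$. By Lemma \ref{sl1cmb1} this is congruent to $2rX^{r-1}Y+2r(r-1)X^{r-2}Y^2$. Applying the observations above gives both cases. Part \ref{Heven3} is the same computation after the substitution $j\mapsto r-j$: we get $H(2X,Y)=\sum_{0<j'<r}2^{j'}\binom{r}{j'}X^{j'}Y^{r-j'}$, and the identical truncation applies.

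For part \ref{Heven2}, I would repeat the manipulation from the proof of Lemma \ref{H}(b):
\[
H(X,2Y-X)=\sum_{j=1}^{r-1}\sum_{i=0}^{j}(-1)^{j-i}2^i\binom{r}{i}\binom{r-i}{j-i}X^{r-i}Y^i .
\]
Modulo $2^{t+2}$, Lemma \ref{sl1cmb1} kills the terms with $i\geq 3$. The coefficient of $X^r$ is $-2\delta(r)$. The coefficient of $X^{r-1}Y$ is $(-1)^r 2r$. The coefficient of $X^{r-2}Y^2$ is $(-1)^{r-1}2r(r-1)$, as computed there.

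Simplifying then gives the statement. For odd $r$ we get $-2rX^{r-1}Y$, since the $Y^2$ term vanishes modulo $4$ and $\delta(r)=0$. For even $r$ we get $-2X^r+2rX^{r-1}Y-2rX^{r-2}Y^2$, using $2r(r-1)\equiv 2r$. The case $r=3$ needs no separate treatment, because every range above is consistent.

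The only delicate point is the congruence $2r(r-1)\equiv 2r \bmod 2^{t+2}$ for even $r$. It requires $v(r(r-2))\geq t+1$, which holds because $v(r-2)=t$ and $v(r)\geq1$. Everything else is routine bookkeeping.
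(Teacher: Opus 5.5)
Your proposal is correct and follows the paper's proof essentially line by line. You truncate each expansion to indices $j\leq 2$ via Lemma \ref{sl1cmb1}, reuse the double-sum coefficient computation from Lemma \ref{H}(b) for $H(X,2Y-X)$, and finish with $2r(r-1)\equiv 2r \bmod 2^{t+2}$ for even $r$ (your justification $v(r(r-2))\geq t+1$ is the paper's $2r(r-1)=2r(r-2)+2r$) and $2r(r-1)\equiv 0 \bmod 4$ for odd $r$.
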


\begin{proof}
Applying Lemma \ref{sl1cmb1} we have 
\begin{eqnarray*}
    H(X,2Y)&=& \sum_{j=1}^{r-1}{r \choose j}X^{r-j}(2Y)^{j} = 2rX^{r-1}Y+ 2^2{r\choose 2}X^{r-2}Y^2+\sum_{j=3}^{r-1}{r \choose j}X^{r-j}(2Y)^{j}\\
    &\equiv & 2rX^{r-1}Y+ 2r(r-1)X^{r-2}Y^2 \mod2^{t+2}\\
    &\equiv& \begin{cases}
        2r(X^{r-1}Y+X^{r-2}Y^2)\mod 2^{t+2} & \text{if } r \text{ is even,}\\
        2rX^{r-1}Y \mod2^{2}& \text{if } r \text{ is odd.}\end{cases}
\end{eqnarray*} 
In the last step, we used that $2r(r-1)\equiv 2r(r-2)+2r\equiv 2r\mod2^{t+2}$ if $r$ is even and further $2r(r-1)\equiv 0\mod 2^{2}$ if $r$ is odd, i.e., $t=0$.

For part \eqref{Heven2}, we have
\begin{eqnarray*}
    H(X,2Y-X) &=& \sum_{j=1}^{r-1}\sum_{i=0}^{j}(-1)^{j-i}2^ i{r \choose j}{j \choose  i}X^{r- i}Y^{ i}\\
              &=& \sum_{j=1}^{r-1}\sum_{ i=0}^{j}(-1)^{j- i}2^ i{r \choose  i}{r- i \choose j- i}X^{r- i}Y^{ i}.
\end{eqnarray*}
On applying Lemma \ref{sl1cmb1}, we can see that terms corresponding to $ i\geq 3$ vanishes modulo $2^{t+2}$. Thus we have
\begin{equation}
H(X,2Y-X) \equiv \sum_{ i=0}^{2}\sum_{j=1}^{r-1}(-1)^{j- i}2^ i{r \choose  i}{r- i \choose j- i}X^{r-i}Y^{i} \mod 2^{t+2}.
\end{equation}
By similar calculations as in the proof of Lemma \ref{H}, we check that the coefficients of $X^{r-i}Y^{i}$ for $i\leq 2$ in the above expression is as follows:
\begin{enumerate}
   \item When $i=0,\,\displaystyle\sum_{j=1}^{r-1}(-1)^{j}{r \choose j} = \begin{cases}
   -2 &\text{ if }\, r \text{ is even, }\\
   0 &\text{ if }\, r \text{ is odd.}
   \end{cases}
   $
   \item When $i=1,\, 2r\displaystyle\sum_{j'=0}^{r-2}(-1)^{j'}{r-1 \choose j'}= \begin{cases}
   2r & \text{ if }\, r \text{ is even,}\\
   -2r & \text{ if }\, r \text{ is odd.}
   \end{cases}$
   \item When $i=2,\, 2^2{r \choose 2}\displaystyle\sum_{j'=0}^{r-3}(-1)^{j'}{r-2 \choose j'}= \begin{cases}
   -2r(r-1) & \text{ if }\, r \text{ is even,}\\
    2r(r-1) &\text{ if }\, r \text{ is odd.}
   \end{cases}$
\end{enumerate}
Thus modulo $2^{t+2}$ we have 
\begin{equation*}
    H(X,2Y-X) \equiv \begin{cases}
-2X^{r}+2rX^{r-1}Y-2rX^{r-2}Y^2 & \text{ if }\, r \text{ is even,}\\
 -2rX^{r-1}Y & \text{ if }\, r \text{ is odd.}
    \end{cases}   
\end{equation*}

For part \eqref{Heven3} we observe that 
\begin{eqnarray*}
    H(2X,Y)&=& \sum_{j=1}^{r-3}{r \choose j}2^{r-j}X^{r-j}Y^{j}+2^2{r\choose 2}X^2Y^{r-2}+2rXY^{r-1} \\
    &\equiv& 2rXY^{r-1}+2r(r-1)X^2Y^{r-2} \mod 2^{t+2}
    \end{eqnarray*}
    by lemma \ref{sl1cmb1}, since ${r\choose j}={r\choose r-j}$ and $r-j\geq 3$ for $j\leq r-3$. Thus
    \begin{eqnarray*}
    H(2X,Y) &\equiv & \begin{cases}
     2rXY^{r-1}+ 2rX^{2}Y^{r-2} &\text{ if }\, r\text{ is even,}\\
     2rXY^{r-1} & \text{ if }\, r \text{ is odd,}
    \end{cases}
\end{eqnarray*}
following similar arguments as in the calculation of $H(X,2Y)$ above.
\end{proof}

\begin{remark}
Though the above lemma is stated for both $r$ even and odd cases, we shall use it only when $r$ is even. 
\end{remark}

\begin{lemma}\label{projection}
    In the short exact sequence \eqref{ses3} the projection map  $\frac{V_r^{(1)}}{V_r^{(2)}}\rightarrow V_1$ is given by
    $\overline{\theta X^{r-3-j}Y^j}\in\frac{V_r^{(1)}}{V_r^{(2)}}\longmapsto
    \begin{cases}
    X, & j=0\\
    Y, & j=r-3\\
    X+Y, & 0<j<r-3.    \end{cases}
    $\\
   As a consequence, the images of both the polynomials $H(X,Y) $ and $X^{r-1}Y+XY^{r-1}$  maps to $0\in V_1$ for odd $r$.\end{lemma}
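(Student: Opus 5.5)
We would identify the quotient $V_r^{(1)}/V_r^{(2)}$ with $V_{r-3}/V_{r-3}^{(1)}$ by dividing by $\theta$. Then we write down an explicit $\Gamma$-equivariant surjection $V_{r-3}\to V_1$ that kills $V_{r-3}^{(1)}$ and the trivial line.

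\textbf{Reduction to $V_s/V_s^{(1)}$.} Since $\theta$ is fixed by $\Gamma=\GL_2(\F_2)$, multiplication by $\theta$ is a $\Gamma$-isomorphism $V_{r-3}\to V_r^{(1)}$. It carries $V_{r-3}^{(1)}$ onto $V_r^{(2)}$. So, with $s=r-3\geq 2$, it suffices to describe the projection $V_s/V_s^{(1)}\to V_1$ in the split sequence of Proposition \ref{VrtoV0}, and to send $\overline{\theta X^{s-j}Y^j}$ to the image of $X^{s-j}Y^j$. The subspace $V_0$ in \eqref{ses3} is the unique trivial line. Any $\Gamma$-surjection onto $V_1$ must kill it, because $V_1$ has no nonzero $\Gamma$-fixed vector: $w$ forces such a vector to be $X+Y$, and $\left(\begin{smallmatrix}1&1\\0&1\end{smallmatrix}\right)$ sends $X+Y$ to $Y$. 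Since $V_1$ is irreducible, the surjection is unique up to scalar once it is nonzero.

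\textbf{Explicit map.} Define $\phi:V_s\to V_1$ by
\[
\phi(f)=f(1,0)\,Y+f(0,1)\,X+f(1,1)\,(X+Y).
\]
This is a sum over the three points $v$ of $\mathbb P^1(\F_2)$ of $f(v)$ times the linear form vanishing at $v$. On monomials it gives $X$ for $j=0$, $Y$ for $j=s$, and $X+Y$ for $0<j<s$, which is exactly the claimed formula. We then check the following.
\begin{enumerate}
\item $\phi$ is $\Gamma$-equivariant. It is enough to check this on the generators $w$ and $\left(\begin{smallmatrix}1&1\\0&1\end{smallmatrix}\right)$, which permute the three points. In characteristic $2$ no determinant twist intervenes. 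This is a routine substitution.
\item $\phi$ kills $V_s^{(1)}$, since $\theta$ vanishes at every $\F_2$-point.
\item $\phi$ is surjective.
\end{enumerate}
By uniqueness up to scalar, $\phi$ is the projection in \eqref{ses3}, with the normalisation fixed by $X^s\mapsto X$ (matching Proposition \ref{VrtoV0}). The main point needing care is the equivariance check together with matching this normalisation.

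\textbf{Consequences.} For odd $r$, Lemma \ref{X_r^*} says the image of $H$ spans the trivial line $V_0$, so it maps to $0$ in $V_1$. For the second polynomial, in characteristic $2$ we have
\[
X^{r-1}Y+XY^{r-1}=XY(X^{r-2}+Y^{r-2})=\theta\sum_{j=0}^{r-3}X^{r-3-j}Y^j .
\]
Its image is therefore $X+Y+(r-4)(X+Y)=(r-3)(X+Y)$, which vanishes for odd $r$.
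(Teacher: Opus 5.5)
Your proof is correct, but it takes a different route from the paper's.

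\textbf{The paper's route.} The paper works inside the quotient, one monomial at a time.
\begin{enumerate}
\item It checks that $\overline{\theta(X^{r-3}+Y^{r-3}+X^{r-4}Y)}$ is $\GL_2(\F_2)$-invariant, so it spans the trivial line.
\item It observes that $\overline{\theta X^{r-3}}$ spans a line fixed by the upper triangular matrices. Hence it must map to a multiple of $X$, the Borel-fixed vector of $V_1$.
\item It transports this by $w$ to get $\overline{\theta Y^{r-3}}\mapsto Y$.
\item It reads off $\overline{\theta X^{r-4}Y}\mapsto X+Y$ from the vanishing of the invariant vector.
\item It uses the congruence $\theta X^{r-3-j}Y^j\equiv\theta X^{r-4}Y \bmod V_r^{(2)}$ for $0<j<r-3$ (Lemma \ref{Vr*}) to handle the remaining monomials.
\end{enumerate}

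\textbf{Your route.} You write down the closed formula $\phi(f)=\sum_{v\in\mathbb{P}^1(\F_2)}f(v)\,\ell_v$ on all of $V_{r-3}$.
\begin{enumerate}
\item Its equivariance reduces to the identity $g\cdot\ell_u=\ell_{ug^{-1}}$.
\item It kills $V_{r-3}^{(1)}$ because $\theta$ vanishes at the $\F_2$-points.
\item You identify it with the projection using $V_1^{\Gamma}=0$ and Schur's lemma.
\end{enumerate}

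\textbf{What each buys.} Your formula treats all monomials uniformly, so the congruence between middle monomials is never needed. It also makes the kernel transparent: it is $\{f : f(1,0)=f(0,1)=f(1,1)\}$, which recovers the paper's invariant vector as the element taking the value $1$ at all three points. The paper's route avoids verifying equivariance of an explicit formula and stays in the Borel-fixed-vector style of Proposition \ref{VrtoV0}.

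Both arguments still need Lemma \ref{X_r^*} for $H$. Since $\theta$ vanishes at the $\F_2$-points, $H$ cannot simply be evaluated there; one would need $H/\theta$ instead.
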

\begin{proof}
We check that $\overline{\theta(X^{r-3}+Y^{r-3}+X^{r-4}Y)}$ is $\GL_{2}(\F_{2})$-invariant, 
hence we can identify its $\bar\F_2$- span as the subspace $V_{0}$ of  $V_{r}^{(1)}/V_{r}^{(2)}$. Since $V_{1} \cong \left(V_{r}^{(1)}/{V_{r}^{(2)}}\right)/V_{0}$, the image of  $\overline{\theta X^{r-4}Y}$ is equal to that of $\overline{\theta(X^{r-3}+Y^{r-3})}$ in $V_{1}$. The 
 span of $\theta{X^{r-3}}$ is fixed by the upper triangular matrices. So up to scalars, it must  map to $X$, which is a generator of the fixed subspace of this Borel subgroup in $V_{1}$. Now using $\GL_{2}(\F_{2})$- linearity, we can see that $\overline{Y^{r-3}}\theta$ maps to $Y$ and therefore $\overline{X^{r-4}Y}\theta$ maps to $X+Y$. Now the explicit map follows since we know $\theta X^{r-4}Y\equiv \theta X^{r-3-j}Y^j\mod V_r^{(2)}$ for all $0\leq j<r-3$.

For $r$ odd, the image of $H(X,Y)$ in $\frac{V_{r}^{(1)}}{V_{r}^{(2)}}$ maps to $0\in V_1$ by Lemma \ref{X_r^*} and we have, $$X^{r-1}Y+Y^{r-1}X=\sum_{j=0}^{r-3}\theta X^{r-3-j}Y^j\equiv \theta\left(X^{r-3}+X^{r-4}Y+Y^{r-3}\right)\mod V_r^{(2)}.$$ The last congruence follows since $r$ is odd. Now using the explicit formula proved above, we get that $X^{r-1}Y+Y^{r-1}X$ maps to $0$ in $V_{1}$.
 \end{proof}
Next, we are going to discuss the general case $r\geq 5$ first, and the small weights $r=2,3,4$ will be  treated separately later.

\begin{prop}\label{theta1}
    Let $r\geq 5 $ and $v(a_{2})=1$. Then $\bar{\Theta}_{1}=0$ and hence $\bar{\Theta} \cong \bar{\Theta}_{0}$.
\end{prop}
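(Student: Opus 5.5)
The plan is to use the two-step filtration of $\bar\Theta_1$ given by diagram \eqref{diag2}. Since $\bar\Theta_1$ is the image of $\mathcal{I}(V_r/V_r^{(1)})$, it is an extension of $\sF_1$ (a quotient of $\mathcal{I}(V_0)$) by $\sF_0$ (a quotient of $\mathcal{I}(V_1)$). I will show separately that $\sF_0=0$ and $\sF_1=0$.

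\emph{Step 1: $\sF_0=0$.} Since $v(a_2)=1$ and $r\geq 5\geq 2$, Lemma \ref{RemarkBG09} with $m=0$ gives $\mathcal{I}(X_r)\subseteq\ker P$. By the proof of Proposition \ref{VrtoV0}, the copy of $V_1$ in \eqref{ses2} is exactly the image $X_r/X_r^{(1)}$ of $X_r$ in $V_r/V_r^{(1)}$. By exactness of $\mathcal{I}$, the image of $\mathcal{I}(V_1)$ in $\bar\Theta_1$ is the image of $\mathcal{I}(X_r)$, which is zero. Hence $\sF_0=0$ and $\bar\Theta_1\cong\sF_1$, a quotient of $\mathcal{I}(V_0)$.

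\emph{Step 2: $\sF_1=0$.} The module $\mathcal{I}(V_0)$ is generated as a $G$-module by $[\mathrm{Id},1]$. Under the surjection of Proposition \ref{VrtoV0}, $[\mathrm{Id},1]$ is the image of $[\mathrm{Id},X^{r-1}Y]$. So it suffices to produce $f\in\mathcal{I}(\Sym^r\bar\Q_2^2)$ such that $(T-a_2)f$ is integral and
\[
\overline{(T-a_2)f}\equiv[\mathrm{Id},\,u\,X^{r-1}Y]\mod \mathcal{I}(X_r+V_r^{(1)})
\]
for some unit $u$; then $P[\mathrm{Id},X^{r-1}Y]=0$ in $\bar\Theta_1$. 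I will build $f$ in the style of the function in Theorem \ref{irred}, but supported only on radii $0$ and $\pm1$ (possibly $\pm2$).

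The natural first candidate is $f_0=[\mathrm{Id},F(X,Y)/a_2]$, since $-a_2f_0=[\mathrm{Id},-F]$ and $F=Y^r-X^{r-1}Y\equiv X^{r-1}Y$ modulo $X_r$. It then remains to check and correct the remaining terms:
\begin{itemize}
\item[(a)] The $T^+$-terms are integral. By Lemma \ref{F}(a),(b), $F(X,2Y)$ and $F(X,2Y-X)$ are divisible by $2$, and $v(a_2)=1$.
\item[(b)] The term $T^-f_0=[g^1_{0,0},F(2X,Y)/a_2]$ is not integral, because its value contains $Y^r/a_2$. This must be cancelled.
\end{itemize}
To cancel (b), I add terms on the left half of the tree, namely $f_{-1}=[g^1_{0,0},\,\cdot\,]$ with value $\frac{1}{a_2}\cdot w\cdot(\text{polynomial})$. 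The polynomial is to be built from $G(X,Y)$ and, if needed, the polynomial $H$ from \eqref{Hdef}, scaled by $1/a_2$ or $1/a_2^2$. The choice is made so that $-a_2f_{-1}$ cancels the $Y^r/a_2$ part of $T^-f_0$. Lemma \ref{T+toT-} lets me transfer these computations to the right half of the tree. The new contributions $T^{\pm}f_{-1}$ are then evaluated using Lemma \ref{Heven} and Lemma \ref{sl1cmb1}. Where non-integral $X^r$ or $Y^r$ terms appear at radii $\pm1$, I correct them with further terms at radius $\pm2$, exactly as $f_{\pm2}$ are used in Theorem \ref{irred}.

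All surviving reduced terms should lie in $X_r$, which contains $X^r$, $Y^r$ and $H$ (Lemma \ref{X_r^*}), or in $V_r^{(1)}$, which I check via Lemma \ref{Vr*}, except for $[\mathrm{Id},-X^{r-1}Y]$ at radius $0$. Once this is established, $[\mathrm{Id},1]\in\ker(\mathcal{I}(V_0)\to\sF_1)$, so $\sF_1=0$, hence $\bar\Theta_1=0$. The isomorphism $\bar\Theta\cong\bar\Theta_0$ then follows from the bottom row of \eqref{diag1}.

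\emph{Main obstacle.} I expect the difficulty to lie in Step 2: choosing the left-half correction terms so that every radius is integral, while the reduction of the total modulo $\mathcal{I}(X_r+V_r^{(1)})$ remains a unit multiple of $[\mathrm{Id},X^{r-1}Y]$. This requires tracking terms modulo $2^{t+2}$ with $t=v(r-2)$ and splitting by the parity of $r$. If a finite correction does not close up, I will fall back on an infinite-radius tail of the form $\left(\frac{2}{a_2}\right)^{n}[g^0_{n,0},F]$. In the case $v(a_2)=1$ the ratio $2/a_2$ is a unit, so such a tail does not decay. To avoid this, I would instead arrange the correction so that the resulting relation in $\sF_1$ has the form $(T^2+cT+1)[\mathrm{Id},1]=0$ with a unit constant, which combined with the radius-$0$ relation should still force $[\mathrm{Id},1]=0$.
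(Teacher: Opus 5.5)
Your Step 1 is correct: the copy of $V_1$ in \eqref{ses2} is the image of $X_r$, and $\mathcal{I}(X_r)\subseteq\ker P$. Your reduction of Step 2 to finding a suitable $f$ is also correct. The gap is that Step 2 is never carried out, and the construction you sketch does not work as described.

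For $f_0=[\mathrm{Id},F/a_2]$ the $T^+$-terms are integral but not negligible. The term $F(X,2Y)/a_2\equiv-\tfrac{2}{a_2}X^{r-1}Y$ puts a unit multiple of the generator at $g^0_{1,0}$, and this is not in $X_r+V_r^{(1)}$. For even $r$ the $g^0_{1,1}$-branch also contributes $\tfrac{2}{a_2}X^r-\tfrac{2(r+1)}{a_2}X^{r-1}Y$. So your expectation that only the radius-$0$ term survives is false.

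Your correction also does not cure the non-integral $Y^r/a_2$ at $g^1_{0,0}$. For $-a_2f_{-1}$ to cancel it, $f_{-1}$ must contain $Y^r/a_2^2$. Both branches of $T^+$ on the left half fix $Y^r$, so this produces $Y^r/a_2^2$ at radius $-2$, and so on outward. There is no decay, because $2/a_2$ is a unit. The fallback does not help either: a relation $(T^2+cT+1)[\mathrm{Id},1]=0$ only makes $\sF_1$ a quotient of the nonzero module $\mathcal{I}(V_0)/(T^2+cT+1)$. The ``radius-$0$ relation'' you want to combine it with is exactly what you have not produced.

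In fact the radius-$0$ term can never be the survivor here. Take $[\mathrm{Id},P/a_2]$ with $P=\sum_j c_jX^{r-j}Y^j$ integral. Integrality of the neighbouring terms forces:
\begin{itemize}
\item $v(c_0)\geq 1$, from $P(X,2Y)$;
\item $v(c_r)\geq 1$, from $P(2X,Y)$;
\item $v\big(\sum_j(-1)^jc_j\big)\geq 1$, from the $X^r$-coefficient of $P(X,2Y-X)$.
\end{itemize}
By Lemma \ref{Vr*} this says exactly that $\bar P\in V_r^{(1)}$. So the radius-$0$ term is automatically killed, and the survivor has to be the $T^+$-term at radius $1$. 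That is the missing idea.

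The paper takes $f=[\mathrm{Id},(X^{r-1}Y-X^{r-3}Y^3)/a_2]$ and checks each term:
\begin{itemize}
\item The reduction of $-a_2f$ lies in $\mathcal{I}(V_r^{(1)})$ by Lemma \ref{Vr*}.
\item $T^-f$ has coefficients $2^{r-1}/a_2$ and $2^{r-3}/a_2$, which vanish mod $\wp$ because $r\geq5$.
\item The $g^0_{1,1}$-branch is $(-4X^{r-1}Y+12X^{r-2}Y^2-8X^{r-3}Y^3)/a_2\equiv0$. The term $-X^{r-3}Y^3$ is there precisely to cancel the $X^r/a_2$ coming from $X^{r-1}Y$.
\end{itemize}
Hence $(T-a_2)f\equiv[g^0_{1,0},\tfrac{2}{a_2}X^{r-1}Y]$ mod $(\wp,V_r^{(1)})$. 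Moreover $\overline{X^{r-1}Y}$ generates $V_r/V_r^{(1)}$ as a $K$-module: $\begin{psmallmatrix}1&1\\0&1\end{psmallmatrix}$ produces $X^r$, and $w$ then gives $Y^r$. So this single relation kills all of $\mathcal{I}(V_r/V_r^{(1)})$, giving $\bar\Theta_1=0$ directly, and your Step 1 becomes unnecessary.
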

\begin{proof}
    Consider the function $f=\left[1, \dfrac{X^{r-1}Y-X^{r-3}Y^{3}}{a_{2}}\right]$. 
 \begin{eqnarray*}
  (T-a_{2})(f) &\equiv& \left[1, -(X^{r-1}Y - X^{r-3}Y^{3})\right] + \left[g_{1,0}^{0}, \dfrac{2}{a_{2}}X^{r-1}Y\right] \mod \wp\\
  &\equiv& \left[g_{1,0}^{0}, \dfrac{2}{a_{2}}X^{r-1}Y\right] \mod\wp, V_{r}^{(1)}.
  \end{eqnarray*}The quotient $\frac{V_{r}}{V_{r}^{(1)}}$ is spanned by the image of three monomials $X^r$, $X^{r-1}Y$ and $Y^r$, all of which lie in the $\bar{\F}_{2}[K]$-module generated by $\overline{X^{r-1}Y}$. Therefore $\left[g_{1,0}^{0}, \frac{2}{a_{2}}X^{r-1}Y\right]$ generates $\mathcal{I}\left(\frac{V_{r}}{V_{r}^{(1)}}\right)$ over $\bar{\F}_{2}[G]$. The congruence displayed above ensures that this generator lies in $\ker(P)$ and thus $\bar{\Theta}_{1}=0$ in the diagram \eqref{diag2}. \end{proof}
\begin{remark}
    We note that the proof above breaks down for $r=3$ and $4$. In the proof of Theorem \ref{k=4,5,6}, we will see that $\mathcal{F}_{1}$ is also a possible contributor to $\bar{\Theta}$ when $r=3$ and $4$.
\end{remark}
Now we will use the observations above to solve the problem when $r\geq 5$ is odd.
\begin{theorem}\label{oddthm}
If $k\geq 7$ is odd and $v(a_2)=1$, then $\bar V_{k,a_2}$ is irreducible.
    \end{theorem}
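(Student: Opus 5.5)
By Proposition \ref{theta1}, for $r=k-2\geq 5$ we have $\bar\Theta\cong\bar\Theta_0$. Diagram \eqref{diag3} then shows that $\bar\Theta_0$ is an extension of $\sF_3$ by $\sF_2$, where $\sF_2$ is a quotient of $\mathcal{I}(V_0)$ and $\sF_3$ is a quotient of $\mathcal{I}(V_1)$. My plan is to prove two things for odd $r$: first, that $\sF_2=0$; second, that $\sF_3$ is a quotient of $\mathcal{I}(V_1)/(T)$. Granting both, $\bar\Theta^{ss}$ is a quotient of $\pi(1,0,1)$, which is irreducible supercuspidal and satisfies $\pi(1,0,1)^{ss}\cong\pi(0,0,1)$. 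Since $\bar\Theta\neq 0$, mod-$2$ LLC then gives $\bar V_{k,a_2}\cong\ind(\omega_2)$.

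\textbf{Step 1: $\sF_2=0$.} By Lemma \ref{X_r^*}, for odd $r$ the polynomial $H(X,Y)$ lies in $X_r\cap V_r^{(1)}$ and its image spans the trivial subspace $V_0\subseteq V_r^{(1)}/V_r^{(2)}$. Slope $1$ gives $\mathcal{I}(X_r)\subseteq\ker P$ by Lemma \ref{RemarkBG09}, so $[1,H]$ maps to $0$. But $[1,\bar H]$ generates $\mathcal{I}(V_0)$ over $G$, hence the whole sub $\mathcal{I}(V_0)$ of diagram \eqref{diag3} dies and $\sF_2=0$. This is immediate from the stated lemmas.

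\textbf{Step 2: $\sF_3$ is killed by $T$.} I would construct $f\in\mathcal{I}(\Sym^r\bar\Q_2^2)$ supported near the centre of the tree whose $(T-a_2)f$ is integral and, modulo $\wp$, $X_r$ and $V_r^{(2)}$, equals an element of $\mathcal{I}(V_r^{(1)}/V_r^{(2)})$ that projects, via Lemma \ref{projection}, to $T[1,X]\in\mathcal{I}(V_1)$ up to a unit. Mimicking Proposition \ref{theta1} and the slope $<1$ construction, I would take $f=[1,\theta\text{-type polynomial}/a_2]$ together with terms $[g^0_{1,\lambda},\cdot/2]$ and $[g^1_{0,0},\cdot/2]$ built from $F,G,H$. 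Natural building blocks are $X^{r-1}Y+XY^{r-1}$, which by Lemma \ref{projection} maps to $0$ in $V_1$ for odd $r$, and $H$.

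The computation then proceeds radius by radius as in Theorem \ref{irred}. It uses the congruences of Lemma \ref{sl1cmb1} and Lemma \ref{Heven} (with $t=v(r-2)=0$ for odd $r$), and the transfer Lemma \ref{T+toT-} to handle the left half of the tree. The goal is that radius $0$ contributes nothing modulo $\wp$, $X_r$, $V_r^{(2)}$, while radii $\pm1$ contribute $[g^0_{1,0},\theta X^{r-3}]+[g^0_{1,1},\ldots]+[g^1_{0,0},\ldots]$, whose image is $T[1,X]$.

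I expect this Step 2 construction to be the main obstacle. For odd $r$ the valuations $v(r-i)$ are not uniformly large, so I must check that every coefficient, such as $2r/a_2$ and $r(r-1)/a_2^2$, is integral. I must also check that the unwanted components land in $V_r^{(2)}$ or $X_r$, using the criterion of Lemma \ref{Vr*}. If a clean $T[1,X]$ does not come out and a $T^2+cT+1$ shape appears instead, I would try to show that the parity obstruction forces $c$ to be undefined, i.e.\ that the relevant $\alpha$-type quantity has large valuation, as in Corollary \ref{cor2}. Even then the quotient must still be supercuspidal.
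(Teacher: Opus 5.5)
\textbf{Gap: Step 2 is never carried out.} Your architecture and Step 1 match the paper's proof. Proposition \ref{theta1} gives $\bar\Theta\cong\bar\Theta_0$. Then $\sF_2=0$, because the trivial line in $V_r^{(1)}/V_r^{(2)}$ is spanned by $H\in X_r$. So everything reduces to showing that $\sF_3$ is a quotient of $\pi(1,0,1)$.

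Step 2 is the entire remaining content of the theorem, and you do not give a test function. You offer only a template: a central $\theta$-type term over $a_2$, plus radius $\pm1$ terms over $2$ built from $F,G,H$. You yourself flag this as the main obstacle, so what you have is a plan, not a proof. Your fallback does not help either. If a relation $T^2+cT+1$ with $c\in\bar\F_2$ genuinely appeared, it would give a principal-series quotient, which is the opposite conclusion. Showing that it degenerates to $T$ is exactly the computation you have not done.

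\textbf{The missing idea.} A two-term function suffices, with no central term and no tails:
\[
f=\bigl[g_{1,0}^0,\tfrac{H'(X,Y)}{a_2}\bigr]+\bigl[g_{0,0}^1,\tfrac{H''(X,Y)}{a_2}\bigr],
\]
where $H'=H-rX^{r-1}Y-rX^{r-2}Y^2$ and $H''=w\cdot H'$.

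\begin{itemize}
\item For odd $r>3$, Lemma \ref{hprime} gives, modulo $4$,
\[
H'(X,2Y)\equiv H'(X,2Y-X)\equiv 0,\qquad H'(2X,Y)\equiv 2rXY^{r-1}.
\]
Hence, modulo $\wp$, $T^+f_1\equiv 0$ and $T^-f_1\equiv[1,\tfrac{2r}{a_2}XY^{r-1}]$.
\item Lemma \ref{T+toT-} gives $Tf_{-1}=w\cdot Tf_1$.
\item Combining these,
\[
(T-a_2)f\equiv-[g_{1,0}^0,H']-[g_{0,0}^1,H'']+\bigl[1,\tfrac{2r}{a_2}(X^{r-1}Y+XY^{r-1})\bigr]\bmod\wp .
\]
This is integral and lies in $\mathcal{I}(V_r^{(1)})$.
\item Since $H'\equiv H+\theta X^{r-3}\pmod 2$, Lemma \ref{projection} sends $H'$, $H''$ and $X^{r-1}Y+XY^{r-1}$ to $X$, $Y$ and $0$ in $V_1$.
\item So the image in $\mathcal{I}(V_1)$ is $[g_{1,0}^0,X]+[g_{0,0}^1,Y]=T[1,X+Y]$, which generates $T\,\mathcal{I}(V_1)$. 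Hence $\sF_3$ is a quotient of $\pi(1,0,1)$.
\end{itemize}

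This is the construction you need to supply. The valuation bookkeeping you anticipate never arises: there are no $\alpha$-type parameters and no radius-by-radius tails as in Theorem \ref{irred}. Your slope-$<1$-style template is heavier than necessary.
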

\begin{proof}
  By proposition \ref{theta1}, we know that $\bar{\Theta}=\bar{\Theta}_{0}$ which has two factors $\sF_2$ and $\sF_3$ as in diagram \eqref{diag3}. When $k$ is odd, we claim that  $\sF_2=0$, and $\sF_3$ factors through $\pi(1,0,1)$. This implies that $\bar{\Theta}\cong\sF_3$ is supercuspidal, hence the associated local Galois representation is irreducible.  


 By Lemma \ref{X_r^*}, we know the submodule $V_0$ in $\frac{V_r^{(1)}}{V_r^{(2)}}$ is spanned by the image of $H(X,Y)\in X_r$, but we know $\mathcal{I} (X_r)\subset \ker P$ by Lemma \ref{RemarkBG09}. Therefore $\sF_2=P\left(\mathcal{I}(V_0)\right)=0$, and consequently $\bar\Theta\cong\bar\Theta_0\cong\sF_3$.

  Next we consider the function $f=f_{1}+f_{-1}\in\mathcal{I}(\Sym^r\bar\Q_2^2)$ with
  $$f_{1}=\left[g_{1,0}^0, \frac{H'(X,Y)}{a_2}\right],\qquad\,
  f_{-1}=\left[g_{0,0}^1, \frac{H''(X,Y)}{a_2}\right],$$
   where $H'$ and $H''$ are as in \eqref{H'} and \eqref{H''} respectively. By Lemma \ref{hprime} and Lemma \ref{T+toT-},  $$Tf=Tf_{1} +Tf_{-1}=Tf_1+w\cdot Tf_1
   \equiv\left[1, \frac{2rXY^{r-1}}{a_2}\right]+ \left[1, \frac{2rX^{r-1}Y}{a_2}\right] \mod\wp.$$ 
   Thus we have that
  $$(T-a_2)f\equiv -\left[g_{1,0}^0, H'(X,Y)\right]-\left[g_{0,0}^1, H''(X,Y)\right]+\left[1, \frac{2r}{a_2}(X^{r-1}Y+XY^{r-1})\right]\mod \wp.$$ Under the map in Lemma \ref{projection}, the polynomials $H'(X,Y), H''(X,Y)$  and $X^{r-1}Y+ XY^{r-1}$  map to $X,Y$ and $0$ respectively in $V_{1}$. Hence $(T-a_{2})f$ maps to $\left[g_{1,0}^0, X\right]+\left[g_{0,0}^1, Y\right]\equiv T([1, X+Y]) \in \mathcal{I}(V_1)$, that generates all of $T\left(\mathcal{I}(V_1)\right) $ as a $\bar{\F}_{2}[G]$-module. Hence $\bar\Theta\cong\sF_3$ must be a quotient of $\mathrm{coker}(T)=\pi(1,0,1)$, as claimed.
  \end{proof}
Now the case of odd $r\geq 5$ is done. The answer is yet to be computed in the case when $r$ is even. Recall that using diagrams \eqref{diag1} and \eqref{diag2} and Proposition \ref{theta1}, we know that $\bar{\Theta}\cong \bar{\Theta}_{0}$ is a quotient of $\mathcal{I}\left(\frac{V_{r}^{(1)}}{V_{r}^{(2)}}\right)$. We note that $\frac{V_{r}^{(1)}}{V_{r}^{(2)}}= \frac{\theta V_{r-3}}{\theta V_{r-3}^{(1)}}$. 
 Further applying Lemma \ref{VrtoV0} for $r-3$, we have the following Lemma. 
\begin{lemma}\label{Vr1/Vr2}
 For $r\geq5,$  we have the following exact sequence,
 \begin{equation}\label{Vr1/Vr2b}
 0 \to V_{1} \to \frac{V_r^{(1)}}{V_{r}^{(2)}} \to V_{0} \to 0
 \end{equation} 
 where the injection map is given by 
    $ X \longmapsto \overline{\theta X^{r-3}},\,
    Y \longmapsto \overline{\theta Y^{r-3}} $
  and the projection map  is given by $\,
     \overline{ \theta X^{r-3-j}Y^j}\longmapsto 1 \text{ if } 0<j<r-3$ and $0$ otherwise.
\end{lemma}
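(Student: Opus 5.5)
The plan is to transport Proposition \ref{VrtoV0} along the isomorphism given by multiplication by $\theta$. First I will check that multiplication by $\theta$ defines a $\Gamma$-equivariant isomorphism
\[
V_{r-3}\xrightarrow{\ \cdot\theta\ } V_r^{(1)},
\]
and that it sends $V_{r-3}^{(1)}=\theta V_{r-6}$ onto $\theta^2V_{r-6}=V_r^{(2)}$. Equivariance holds because $\theta$ is $\GL_2(\F_2)$-invariant, as noted in \S\ref{fil}. Injectivity holds because $\bar\F_2[X,Y]$ is a domain. Surjectivity onto $V_r^{(1)}$ is the definition $V_r^{(1)}=\theta V_{r-3}$. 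When $r-3<3$, both $V_{r-3}^{(1)}$ and $V_r^{(2)}$ are zero, so the claim still holds. Together these give an isomorphism of $\bar\F_2[\Gamma]$-modules
\[
\frac{V_{r-3}}{V_{r-3}^{(1)}}\;\xrightarrow{\ \sim\ }\;\frac{V_r^{(1)}}{V_r^{(2)}},\qquad \overline{P}\mapsto \overline{\theta P}.
\]
This is essentially Remark \ref{remVrVr1} with $n=1$.

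Next I will apply Proposition \ref{VrtoV0} with $r$ replaced by $r-3$. This needs $r-3\geq 2$, which is exactly the hypothesis $r\geq 5$. It gives the split exact sequence $0\to V_1\to V_{r-3}/V_{r-3}^{(1)}\to V_0\to 0$. Here the injection is $X\mapsto\overline{X^{r-3}}$ and $Y\mapsto\overline{Y^{r-3}}$. The surjection is $\sum c_j\overline{X^{r-3-j}Y^j}\mapsto\sum_{0<j<r-3}c_j$. Composing both maps with the isomorphism of the first step yields the sequence \eqref{Vr1/Vr2b}. The injection becomes $X\mapsto\overline{\theta X^{r-3}}$ and $Y\mapsto\overline{\theta Y^{r-3}}$. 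The projection sends $\overline{\theta X^{r-3-j}Y^j}$ to $1$ for $0<j<r-3$ and to $0$ for $j=0$ or $j=r-3$.

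There is no real obstacle here. The only points needing care are the boundary cases. For $r=5$ one has $V_{r-3}^{(1)}=0$ and $V_r^{(2)}=0$, and Proposition \ref{VrtoV0} still applies since $r-3=2$. I also need $\theta V_{r-3}^{(1)}=V_r^{(2)}$ as sets, and not merely the inclusion $\subseteq$; this follows directly from the definition $V_r^{(2)}=\theta^2V_{r-6}$. Finally, the basis description is well defined because the monomials $\theta X^{r-3-j}Y^j$ span $V_r^{(1)}$.
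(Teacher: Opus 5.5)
Your proposal is correct and is the same argument as the paper's. The paper identifies $V_r^{(1)}/V_r^{(2)}=\theta V_{r-3}/\theta V_{r-3}^{(1)}$ and applies Proposition \ref{VrtoV0} with $r$ replaced by $r-3\geq 2$. You spell out the details the paper leaves implicit: $\Gamma$-equivariance and injectivity of multiplication by $\theta$, the equality $\theta V_{r-3}^{(1)}=V_r^{(2)}$, and the boundary case $r=5$.
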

 Recall that to analyse the case of odd $r$ in Theorem \ref{oddthm},  we used a different short exact sequence for $V_r^{(1)}/V_r^{(2)}$ that induces commutative diagram \eqref{diag3}. However, the exact same  approach do not work for even $r$. Instead we  utilise the following commutative diagram induced by \eqref{Vr1/Vr2b} above to find the structure of  $\bar{\Theta}_{0}$. 
\begin{equation}\label{diag4}
\begin{tikzcd}
 		0 \arrow[r] & \mathcal{I}\left(V_{1}\right) \arrow[d, ""]\arrow[r] & \mathcal{I}\left(\dfrac{V_{r}^{(1)}}{V_{r}^{(2)}}\right)  \arrow[r] \arrow[d, "P"] & \mathcal{I}(V_{0}) \arrow [r] \arrow[d, ""]& 0\\
 		0 \arrow[r] &\mathcal{F'}_{2}  \arrow[r] & \bar{\Theta}_{0} \arrow[r] & \mathcal{F'}_{3}\arrow[r] & 0
 \end{tikzcd}
\end{equation}

With the notation defined by the diagram above, (which is valid irrespective of parity of $r$) we have

\begin{prop}\label{F3'}
    Let $r\geq 6$ be even and $v(a_{2})=1$, then $\mathcal{F'}_{3}$ is a quotient of $\pi(0,\frac{a_{2}}{2},1)$. 
\end{prop}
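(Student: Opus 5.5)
The plan is to imitate the proof of Theorem \ref{irred}. I will construct a function $f\in\mathcal{I}(\Sym^r\bar\Q_2^2)$ with the following property: $(T-a_2)f$ is integral, and its reduction lies in $\mathcal{I}(V_r^{(1)})$ modulo $\mathcal{I}(V_r^{(2)}+X_r)$. Moreover, under the projection $V_r^{(1)}/V_r^{(2)}\to V_0$ of Lemma \ref{Vr1/Vr2}, this reduction should map to $(T-\overline{a_2/2})[1,1]\in\mathcal{I}(V_0)$.

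Since $\mathcal{I}(V_r^{(2)})$ and $\mathcal{I}(X_r)$ lie in $\ker P$ (Lemma \ref{RemarkBG09}), and $\bar\Theta\cong\bar\Theta_0$ (Proposition \ref{theta1}), the commutativity of diagram \eqref{diag4} would then give the conclusion. Indeed, the image of $(T-\overline{a_2/2})[1,1]$ in $\mathcal{F'}_3$ is zero, so by $G$-linearity $\mathcal{F'}_3$ is a quotient of $\mathcal{I}(V_0)/(T-\overline{a_2/2})=\pi(0,\frac{a_2}{2},1)$. Note that $a_2/2$ is a unit because $v(a_2)=1$.

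\textbf{Choice of $f$.} I take $f=f_0+f_1+f_{-1}$, possibly with small corrections at radius $\pm 2$.
\begin{itemize}
\item The central term is $f_0=[\mathrm{Id},\,Q/2]$. Here $Q\in V_r^{(1)}$ is an integral polynomial whose reduction maps to $1\in V_0$; for instance $Q=\theta X^{r-4}Y$, or a combination such as $X^{r-2}Y^2-X^{r-3}Y^3$, adjusted by multiples of $H(X,Y)$.
\item With this choice, $-a_2f_0=[\mathrm{Id},-\tfrac{a_2}{2}Q]$ produces the term $-\overline{a_2/2}\,[1,1]$.
\item The three neighbours in $T^+f_0$ carry the values $Q(X,2Y)/2$, $Q(X,2Y-X)/2$ and $Q(2X,Y)/2$ on $g_{1,0}^0$, $g_{1,1}^0$ and $g_{0,0}^1$. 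These should produce $T[1,1]$.
\end{itemize}

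\textbf{Correction terms.} The values $Q(X,2Y)/2$ etc.\ will typically not lie in $V_r^{(1)}$; for example, they can contain stray $X^{r-1}Y$ terms. To remove these I add $f_{\pm1}$ supported on $g_{1,0}^0,g_{1,1}^0,g_{0,0}^1$, with values of the form $\frac{1}{a_2}(\ldots)$ built from $F$, $G$, $H'$, $H''$. Their $T^-$ contributions return to the centre, and the identity $\theta X^{r-3-j}Y^j\equiv\theta X^{r-4}Y$ together with Lemma \ref{Vr*} identifies the images.

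To evaluate these contributions modulo $2^{t+2}$ I will use:
\begin{itemize}
\item Lemma \ref{Heven} (valid for even $r$, with $t=v(r-2)$), Lemma \ref{sl1cmb1}, and Lemma \ref{hprime};
\item Lemma \ref{T+toT-}, to transfer the computation on the left half of the tree to the right half.
\end{itemize}
At each radius I then check that the sum of the contributions is integral and lies in $V_r^{(1)}+X_r$ modulo $V_r^{(2)}$. The terms at radii $\geq 2$ should vanish modulo $\wp$, since every extra application of $T^{\pm}$ introduces a factor $2/a_2$ or $2^j\binom{r}{j}$ of positive valuation beyond what is needed.

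\textbf{Main obstacle.} The hard step is the choice of $Q$ and of the correction terms: the images at radii $\pm1$ must land in $V_r^{(1)}$ and project exactly to $1$ in $V_0$, simultaneously on all three neighbours. The binomial sums $\sum(-1)^j\binom{r}{j}$, which for even $r$ produce the $-2X^r$ term in $H(X,2Y-X)$, break the symmetry between $g_{1,0}^0$ and $g_{1,1}^0$. I would first try $Q=\theta X^{r-4}Y$ and compute its three transforms directly. Any leftover $X^{r-1}Y$ or $X^r$ terms would then be absorbed using $X_r\subset\ker P$ together with an added $[g,\frac{H'}{a_2}]$-type term. Finally, I would verify separately the case when $v(r-2)$ is large, since Lemma \ref{Heven} then gives extra cancellations.
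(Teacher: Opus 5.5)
Your reduction is the right one, and it is how the paper concludes. It suffices to produce $f$ such that $(T-a_2)f$ is integral and its reduction lies in $\mathcal{I}(V_r^{(1)})$ modulo $\mathcal{I}(X_r+V_r^{(2)}+\theta X_{r-3})$ and maps to $(T-\overline{a_2/2})[1,1]\in\mathcal{I}(V_0)$.

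\textbf{The gap: no decay at slope $1$.} The gap is in the construction of $f$, specifically your claim that radii $\geq 2$ take care of themselves because each further $T^{\pm}$ brings a factor $2/a_2$ of positive valuation. At slope $1$, $2/a_2$ is a \emph{unit}, so nothing decays. Suppose $g^0_{1,\lambda}$ carries $\frac{1}{a_2}P$ with $P=\sum_j p_jX^{r-j}Y^j$ integral. Then, up to terms vanishing mod $\wp$, $T^+$ puts $\frac{p_0}{a_2}X^r+\frac{2p_1}{a_2}X^{r-1}Y$ on one child. On the other child it puts $\frac{1}{a_2}\bigl(\sum_j(-1)^jp_j\bigr)X^r-\frac{2}{a_2}\bigl(\sum_j(-1)^jjp_j\bigr)X^{r-1}Y$. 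So radius $2$ is harmless only if $\bar p_0=\bar p_1=0$ and $\sum_j\bar p_j=\sum_{j\ \mathrm{odd}}\bar p_j=0$, with mirror conditions on the left.

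\textbf{Your concrete ingredients violate these conditions.}
With $Q=\theta X^{r-4}Y$ one gets $Tf_0\equiv[g^0_{1,1},X^r+X^{r-1}Y]$ mod $\wp$, and this value fails on three counts.
\begin{enumerate}
\item Since $X^{r-1}Y\notin V_r^{(1)}+X_r$, the inclusion $X_r\subset\ker P$ cannot absorb it.
\item For even $r$ the $H'/a_2$-terms do nothing, because $\bar H'=\bar H\in X_r\cap V_r^{(2)}$.
\item Cancelling it through $-a_2f_1$ forces, by the constraints above, a unit $Y^r$-coefficient in $a_2f_1$ at $g^0_{1,1}$. Its $T^-$ then puts $\frac1{a_2}(X+Y)^r$ at the centre, whose middle part is non-integral when $r$ is not a power of $2$.
\end{enumerate}

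\textbf{The ansatz cannot succeed.} Take $f=[1,Q/2]+f_1+f_{-1}$ with $Q$ integral and $r$ not a power of $2$ (e.g.\ $r=6$). The radius-$2$ constraints, together with the requirement that the three radius-$\pm1$ values map to $1\in V_0$, force $T^-f_1+T^-f_{-1}\equiv\overline{2/a_2}\,(XY^{r-1}+X(X+Y)^{r-1}+X^{r-1}Y)$ modulo $(\wp,X_r)$ at the centre. This has image $\overline{2/a_2}\neq0$ in $V_r/(V_r^{(1)}+X_r)$. The term $-a_2f_0$ cannot cancel it, because $\bar Q\in V_r^{(1)}$.

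\textbf{What is missing.} You need a tail that terminates by exact cancellation rather than by valuation. The paper's $f$ lives on radii $-3,\dots,3$:
\begin{enumerate}
\item at the centre, $\frac12(XY^{r-1}+X^{r-1}Y+X^{r-2}Y^2+X^{r-3}Y^3)$;
\item at radius $\pm1$, $\frac1{a_2}X^{r-3}Y^3$, resp.\ $\frac1{a_2}X^3Y^{r-3}$;
\item at $g^0_{2,2},g^0_{2,3}$, $\frac12(X^{r-1}Y-X^{r-2}Y^2)+\frac{2}{a_2^2}X^{r-2}Y^2+\frac{1}{a_2^2}H$, mirrored at $g^1_{1,1}$;
\item at radius $\pm3$, $\frac1{a_2}(Y^r-X^{r-2}Y^2)$, resp.\ $\frac1{a_2}(X^r-X^2Y^{r-2})$.
\end{enumerate}
The non-integral $-\frac1{a_2}X^r$ in $T^+f_1$ and $-\frac1{a_2}H$ in $-a_2f_2$ are cancelled by $T^-f_3$. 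The totals at radii $\pm2,\pm3$ reduce into $\theta X_{r-3}+X_r$, and $T^+f_{\pm3}\equiv0$ mod $\wp$ because $r$ is even.

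\textbf{A radius-$\le1$ alternative, outside your ansatz.} It is in fact possible to stay on radii $-1,0,1$:
\begin{enumerate}
\item at the centre, $\frac1{a_2^2}(H+2X^{r-2}Y^2)+\frac12(X^{r-1}Y+X^{r-2}Y^2+X^{r-3}Y^3+XY^{r-1})$;
\item at $g^0_{1,0},g^0_{1,1}$, $\frac1{a_2}(Y^r+XY^{r-1}+X^{r-2}Y^2+X^{r-3}Y^3)$;
\item at $g^1_{0,0}$, the $w$-mirror of that value.
\end{enumerate}
With these values, $(T-a_2)f$ is integral and maps to $(T-\overline{a_2/2})[1,1]$. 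But this needs the non-integral centre term $\frac1{a_2^2}H$, which your ansatz ($Q$ integral) excludes. It also needs the $Y^r/a_2$ terms, and a verification of the radius-$2$ conditions that your valuation heuristic skips.
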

\begin{proof}
For $r\geq 6$ even, we consider the function $f$ described as follows:
  \begin{eqnarray}
  	f_{3}&=& \left[g_{3,2}^{0}, \dfrac{(Y^{r}-X^{r-2}Y^{2})}{a_{2}}\right] + \left[g_{3,2^{2}+2}^{0},\dfrac{(Y^{r}-X^{r-2}Y^{2})}{a_{2}}\right] + \left[g_{3,3}^{0}, \dfrac{(Y^{r}-X^{r-2}Y^{2})}{a_{2}}\right]\nonumber\\
  	&& + \left[g_{3,2^{2}+3}^{0},\dfrac{(Y^{r}-X^{r-2}Y^{2})}{a_{2}} \right],\nonumber\\
	f_{2} &=&  \left[g_{2,2}^{0}, \dfrac{1}{2}(X^{r-1}Y-X^{r-2}Y^{2})+ \dfrac{2}{a_{2}^{2}}X^{r-2}Y^{2} + \dfrac{H(X,Y)}{a_{2}^{2}}\right]\nonumber\\
	&  & + \left[g_{2,3}^{0}, \dfrac{1}{2}(X^{r-1}Y-X^{r-2}Y^{2})+ \dfrac{2}{a_{2}^{2}}X^{r-2}Y^{2} + \dfrac{H(X,Y)}{a_{2}^{2}}\right],\nonumber\\
	f_{1} &= & \left[g_{1,0}^{0}, \frac{X^{r-3}Y^{3}}{a_{2}} \right]
	 + 
	\left[g_{1,1}^{0}, \frac{X^{r-3}Y^{3}}{a_{2}} \right],\nonumber\\
	f_{0} &=& \left[1, \dfrac{XY^{r-1}+X^{r-1}Y + X^{r-2}Y^{2} + X^{r-3}Y^{3}}{2}\right],\nonumber\\
	f_{-1} & =& \left[g_{0,0}^{1}, \frac{ X^{3}Y^{r-3}}{a_{2}} \right],\nonumber\\
	f_{-2} & = & \left[g_{1,1}^{1},\dfrac{1}{2}(XY^{r-1}-X^{2}Y^{r-2})+ \dfrac{2}{a_{2}^{2}}X^{2}Y^{r-2} + \dfrac{H(X,Y)}{a_{2}^{2}}\right], \nonumber\\
	f_{-3} &=& \left[g_{2,1}^{1}, \dfrac{(X^{r}-X^{2}Y^{r-2})}{a_{2}}\right] + \left[g_{2,3}^{1},\dfrac{(X^{r}-X^{2}Y^{r-2})}{a_{2}}\right]. \label{F3'eqeven}
\end{eqnarray}
We now compute $(T-a_{2})f \mod \wp, X_{r}, V_{r}^{(2)}+\theta X_{r-3} $. Since $f$ is supported on radii $n$ with $-3\leq n\leq 3$, it is enough to do this computation for radii between  $-4$ and $4$.\\
\noindent \underline{\textbf{Radius 0}}\\
Using lemmas \ref{T} and \ref{T+toT-} and that $r>4$, one checks that both $T^-f_1$ and $T^-f_{-1}$ vanish modulo $ \wp$. Therefore we have \begin{eqnarray}
 -a_{2}f_{0}+ T^{-}f_{1}+ T^{-}f_{-1} &\equiv& -a_2f_0\mod\wp\nonumber\\
 &\equiv & \left[1,\dfrac{-a_{2}(X^{r-1}Y+XY^{r-1}+\theta X^{r-4}Y)}{2} \right] \nonumber\\
 &\equiv& \left[1, \dfrac{-a_{2}(\theta X^{r-4}Y)}{2} \right]\mod (V_{r}^{(2)}+\theta X_{r-3}), \label{sl1F3'rad0}
\end{eqnarray}
 where in the last step we use the fact that $X^{r-1}Y +  XY^{r-1} \equiv \theta (X^{r-3} + Y^{r-3}+ (r-4)X^{r-4}Y)\mod V_r^{(2)}$, which is further congruent to $ \theta (X^{r-3}+Y^{r-3}) \mod \wp$ as $r$ is even.

\noindent\underline{\textbf{Radius 1}}\\
We compute using $r$ is even, that
\begin{equation}\label{F3'T+f0}
T^{+}(f_{0}) \equiv [g_{1,0}^{0}, X^{r-1}Y] + [g_{1,1}^{0}, X^{r-1}Y-X^{r}] \mod\wp, 
\end{equation}
\begin{equation}\label{F3'a2f1}
    -a_{2}f_{1} = [g_{1,0}^{0}, -X^{r-3}Y^3] + [g_{1,1}^{0}, -X^{r-3}Y^3].
\end{equation}
Further looking at the function $f_2$ given in \eqref{F3'} and using $r>3$, we conclude that
\begin{eqnarray*}
   T^{-}f_{2}&\equiv & \left[g_{1,0}^{0}, \dfrac{H(2X,X+Y)}{a_{2}^2} \right] + \left[g_{1,1}^{0}, \dfrac{H(2X,X+Y)}{a_{2}^2}\right]\\
   &\equiv& \left[g_{1,0}^{0}, \dfrac{2r(X(X+Y)^{r-1}+ X^2(X+Y)^{r-2})}{a_{2}^2}\right]+ \left[g_{1,1}^{0}, \dfrac{2r(X(X+Y)^{r-1}+ X^2(X+Y)^{r-2})}{a_{2}^2}\right]\\
   &\equiv & \left[g_{1,0}^{0}, \dfrac{2rX(X+Y)^{r-2}(2X+Y)}{a_{2}^2} \right]+ \left[g_{1,1}^{0}, \dfrac{2rX(X+Y)^{r-2}(2X+Y)}{a_{2}^2} \right] \\
& \equiv & \left[g_{1,0}^{0}, \dfrac{2rXY(X+Y)^{r-2}}{a_{2}^2} \right]+ \left[g_{1,1}^{0}, \dfrac{2rXY(X+Y)^{r-2}}{a_{2}^2} \right] \mod \wp.
\end{eqnarray*}
Here in the second step we use part (3) of Lemma \ref{Heven} and that $t\geq 1$.
Next we check that the polynomial  $XY(X+Y)^{r-2}$ lies in the submodule $V_r^{(2)}+\theta X_{r-3}$ of $V_r$, since
\begin{eqnarray*}
XY(X+Y)^{r-2} & = &X^{r-1}Y + XY^{r-1} + \displaystyle\sum_{k=1}^{r-3}{r-2 \choose k}X^{r-1-k}Y^{k+1}\\
&\equiv &X^{r-1}Y + XY^{r-1}\mod V_{r}^{(2)}\\
&\equiv &  \theta (X^{r-3} + (r-4)X^{r-4}Y + Y^{r-3})\\
&\equiv& 0 \mod \theta X_{r-3},
\end{eqnarray*}
as $r$ is even.
Indeed, the first congruence above follows from lemma \ref{Vr*} since
\begin{eqnarray*}
\displaystyle\sum_{k=1}^{r-3}{r-2 \choose k} &= &2^{r-2}-2 \equiv 0\mod 2, \text{\,  and further}\\
\displaystyle\sum_{k=1}^{r-3}(k+1){r-2 \choose k} &\equiv&\displaystyle\sum_{k=1}^{r-3}k{r-2 \choose k}=(r-2)\displaystyle\sum_{k=1}^{r-3}{r-3 \choose k-1}  \equiv 0\mod 2.
\end{eqnarray*}

Thus we have obtained
\begin{equation}\label{F3'T-f2}
   T^{-}f_{2}\equiv 0 \mod \wp,X_{r}, V_{r}^{(2)} + X_{r-3}\theta.
\end{equation}

On summing up \eqref{F3'a2f1}, \eqref{F3'T+f0} and \eqref{F3'T-f2} we get that modulo $( \wp,X_{r}, V_{r}^{(2)}+ \theta
X_{r-3}$)
\begin{eqnarray}
T^{-}f_{2} -a_{2}f_{1} + T^{+}f_{0} &\equiv &\left[g_{1,0}^{0}, X^{r-1}Y-X^{r-3}Y^3 \right] + \left[g_{1,1}^{0},X^{r-1}Y-X^{r-3}Y^3-X^{r}\right]\nonumber\\
&\equiv &\left[g_{1,0}^{0}, \theta(X^{r-3}+X^{r-4}Y) \right]+ \left[g_{1,1}^{0},\theta(X^{r-3}+X^{r-4}Y)\right]\mod X_r  \nonumber\\
&\equiv& \left[g_{1,0}^{0}, \theta X^{r-4}Y\right] + \left[g_{1,1}^{0},\theta X^{r-4}Y\right]\mod 
\theta X_{r-3}. \label{sl1F3'rad1}
\end{eqnarray}
\noindent\underline{\textbf{Radius 2}}\\
We now check the part of $(T-a_{2})f$ that is supported on radius $2$. Computing modulo $\wp$
\begin{eqnarray}\label{F3'T^+f1}
T^{+}f_{1} &\equiv& \left[g_{2,2}^{0}, \dfrac{2X^{r-1}Y}{a_{2}}-\dfrac{X^{r}}{a_{2}}\right] + \left[g_{2,3}^{0}, \dfrac{2X^{r-1}Y}{a_{2}}-\dfrac{X^{r}}{a_{2}}\right] ,\\
-a_{2}f_{2} &=& \left[g_{2,2}^{0}, \frac{-a_{2}(X^{r-1}Y-X^{r-2}Y^2)}{2} -\dfrac{2X^{r-2}Y^2}{a_{2}}-\dfrac{H(X,Y)}{a_{2}} \right] \nonumber\\
&& + \left[g_{2,3}^{0},\frac{-a_{2}(X^{r-1}Y-X^{r-2}Y^2)}{2} -\dfrac{2X^{r-2}Y^2}{a_{2}}-\dfrac{H(X,Y)}{a_{2}} \right], \label{F3'a2f2}\\
\text{ and } \:T^{-}f_{3}&\equiv& \left[g_{2,2}^{0}, \frac{Y^{r}+(X+Y)^{r}}{a_{2}}\right] + \left[g_{2,3}^{0}, \frac{Y^{r}+(X+Y)^{r}}{a_{2}}\right] \nonumber\\
&\equiv&\left[g_{2,2}^{0}, \frac{2Y^{r}+ H(X,Y)+X^{r}}{a_{2}}\right] + \left[g_{2,3}^{0},\frac{2Y^{r}+ H(X,Y)+X^{r}}{a_{2}}\right].\label{F3'T-f3}
\end{eqnarray}
Combining \eqref{F3'T^+f1}, \eqref{F3'a2f2} and \eqref{F3'T-f3}, we get
\begin{eqnarray*}
    T^{+}f_{1}-a_{2}f_{2}+T^{-}f_{3}&\equiv&\left[g_{2,2}^{0}, \left(\dfrac{2}{a_{2}}-\dfrac{a_{2}}{2}\right)\theta X^{r-3}+\dfrac{2Y^{r}}{a_{2}}\right]
    + \left[g_{2,3}^{0}, \left(\dfrac{2}{a_{2}}-\dfrac{a_{2}}{2}\right)\theta X^{r-3}+ \dfrac{2Y^{r}}{a_{2}}\right] \nonumber\\
    &\equiv& 0\mod(\wp,X_{r},V_{r}^{(2)}+\theta X_{r-3}).
\end{eqnarray*}
\noindent\underline{\textbf{Radius 3}}\\
We now compute radius $3$ of $(T-a_{2})f$. Going modulo $\wp$ and modulo $X_{r}$ we get
\begin{eqnarray}
    T^{+}f_{2} &\equiv& \left[g_{3,2}^{0}, X^{r-1}Y+\dfrac{H(X,2Y)}{a_{2}^2}\right] + \left[g_{3,2^2+2}^{0}, X^{r-1}Y-X^{r}+ \dfrac{2X^{r}}{a_{2}^2} + \dfrac{H(X,2Y-X)}{a_{2}^2}\right]\nonumber\\
    && + \left[g_{3,3}^{0},X^{r-1}Y+\dfrac{H(X,2Y)}{a_{2}^2}\right] + \left[g_{3,2^2+3}^{0}, X^{r-1}Y-X^{r}+ \dfrac{2X^{r}}{a_{2}^2} + \dfrac{H(X,2Y-X)}{a_{2}^2}\right] \nonumber\\
    &\equiv& \left[g_{3,2}^{0}, X^{r-1}Y + \dfrac{2rX^{r-3}\theta}{a_{2}^2}\right] + \left[g_{3,2^2+2}^{0}, X^{r-1}Y+ \dfrac{2rX^{r-3}\theta}{a_{2}^2}\right]\nonumber\\
    & & +\left[g_{3,3}^{0},X^{r-1}Y + \dfrac{2rX^{r-3}\theta}{a_{2}^2}\right] + \left[g_{3,2^2+3},X^{r-1}Y +\dfrac{2rX^{r-3}\theta}{a_{2}^2}\right], \label{F3'T+f2}     
    \end{eqnarray}
 by part \eqref{Heven2} of Lemma \ref{Heven}. Further adding this to
\begin{eqnarray}\label{F3'a2f3}
  -a_{2}f_{3} &=& \left[g_{3,2}^{0}, -Y^{r}+X^{r-2}Y^{2}\right]  + \left[g_{3,2^{2}+2}^{0},-Y^{r}+X^{r-2}Y^{2}\right] + \left[g_{3,3}^{0}, -Y^{r}+X^{r-2}Y^{2}\right] \nonumber\\
  & &+ \left[g_{3,2^2+3}^{0},-Y^{r}+X^{r-2}Y^{2}\right],
\end{eqnarray}
we get that
\begin{eqnarray*}
  T^{+}f_{2}-a_{2}f_{3} &\equiv& \left[g_{3,2}^{0}, \left(1+\dfrac{2r}{a_{2}^2}\right)\theta X^{r-3} -Y^{r}\right] + \left[g_{3,2^2+2}^{0},\left(1+\dfrac{2r}{a_{2}^2}\right)\theta X^{r-3} -Y^{r}\right] \nonumber\\
   && + \left[g_{3,3}^{0},\left(1+\dfrac{2r}{a_{2}^2}\right)\theta X^{r-3}-Y^{r}\right]
    + \left[g_{3,2^2+3}^{0},\left(1+\dfrac{2r}{a_{2}^2}\right)\theta X^{r-3}-Y^{r} \right] \nonumber\\
    &\equiv & 0\mod(\wp,X_{r},V_{r}^{(2)}+\theta X_{r-3}).
    \end{eqnarray*}
\noindent\underline{\textbf{Radius 4}}\\
From the definition of $f_{3}$ as in \eqref{F3'}, since $r$ is even, it follows that $T^{+}f_{3} \equiv 0 \mod \wp$.\\

We now compute  the parts of $(T-a_{2})f$ supported on the negative radii which involves studying the action of $T$ in the left side of the tree. In the steps that follow, we shall use Lemma \ref{T+toT-} to simplify the calculations as already done in $\S \ref{slope<1}$ for slopes in $(0,1)$.

\noindent\underline{\textbf{Radius -1}}\\   
We calculate, using the hypothesis $r>4$, that
\begin{equation}\label{F3'T-f0-a2f-1}
    T^{-}f_{0} -a_{2}f_{-1} \equiv \left[g_{0,0}^{1}, XY^{r-1}-X^{3}Y^{r-3}\right] \equiv \left[g_{0,0}^{1}, -\theta(Y^{r-3}+XY^{r-4})\right]\mod \wp.
\end{equation}
 Next using equation \eqref{T+toT-d} from Lemma \eqref{T+toT-}, we get that
 \begin{eqnarray}
T^{-}f_{-2} &=& T^{-}\left[g_{1,1}^{1}, \dfrac{(XY^{r-1}-X^2Y^{r-2})}{2}+\dfrac{2X^{2}Y^{r-2}}{a_{2}^2} + \dfrac{H(X,Y)}{a_{2}^2} \right] \nonumber\\
&=& \begin{pmatrix}
    0 & 1\\
    1 & -1
\end{pmatrix}T^{-}\left[g_{2,3}^{0},\dfrac{(X^{r-1}Y-X^{r-2}Y^2)}{2}+\dfrac{2X^{r-2}Y^{2}}{a_{2}^2} + \dfrac{H(X,Y)}{a_{2}^2}\right] \nonumber\\
&\equiv & 0\mod(\wp, X_{r}, V_{r}^{(2)}+\theta X_{r-3})\label{F3'T-f-2},
\end{eqnarray}
where the last congruence follows from the computations leading to the equation \eqref{F3'T-f2} above. Now adding \eqref{F3'T-f0-a2f-1} and \eqref{F3'T-f-2} we get
\begin{equation}\label{sl1F3'rad-1}
T^-f_{-2}-a_{2}f_{-1}+T^{-}f_{0} \equiv \left[g_{0,0}^{1}, -\theta XY^{r-4}    \right]\mod(\wp, X_{r},V_{r}^{(2)}+\theta X_{r-3}).
\end{equation} 

 
\noindent\underline{\textbf{Radius -2}}\\ 
Using the equation \eqref{T+toT-a} in Lemma \ref{T+toT-}, we have
\begin{eqnarray}
T^{+}f_{-1} &=& T^{+}\left[g_{0,0}^{1}, \dfrac{X^{3}Y^{r-3}}{a_{2}}\right] =  w.T^{+}\left[g_{1,0}^{0}, \dfrac{X^{r-3}Y^{3}}{a_{2}}\right] \nonumber\\
&\equiv& w.\left[g_{2,2}^{0}, \dfrac{6X^{r-1}Y-X^{r}}{a_{2}}\right]\equiv \left[g_{1,1}^{1}.w,\:\dfrac{2X^{r-1}Y-X^{r}}{a_{2}}\right] \nonumber\\
&\equiv& \left[g_{1,1}^{1}, \dfrac{2XY^{r-1}-Y^{r}}{a_{2}}\right] \mod\wp.\label{F3'T+f-1}
\end{eqnarray}
It follows from \eqref{F3'} that
\begin{equation}\label{F3'a2f-2}
    -a_{2}f_{2} = \left[g_{1,1}^{1},  \dfrac{-a_{2}(XY^{r-1}-X^{2}Y^{r-2})}{2}-\dfrac{2X^{2}Y^{r-2}}{a_{2}}-\dfrac{H(X,Y)}{a_{2}}\right].
\end{equation}
We now compute  by directly applying Lemma \ref{T} that 
\begin{eqnarray}
    T^{-}f_{-3} &\equiv &\left[g_{1,1}^{1}, \dfrac{X^{r}}{a_{2}} + \dfrac{(X+Y)^{r}}{a_{2}}\right] \mod\wp \nonumber\\
    &\equiv & \left[g_{1,1}^{1}, \dfrac{2X^{r}+ H(X,Y)+ Y^{r}}{a_{2}}\right] \mod \wp.  \label{F3'T-f-3}
\end{eqnarray}

Adding \eqref{F3'T+f-1}, \eqref{F3'a2f-2} and \eqref{F3'T-f-3}, we get 
\begin{eqnarray*}
     T^{+}f_{-1} - a_{2}f_{2} + T^{-}f_{-3} &\equiv& \left[g_{1,1}^{1}, \left(\dfrac{a_{2}}{2}-\dfrac{2}{a_2}\right)\theta Y^{r-3} + \dfrac{2X^{r}}{a_{2}}\right] \\
    &\equiv & 0\mod (\wp, X_{r},V_{r}^{(2)}+\theta X_{r-3}).
\end{eqnarray*}

\noindent\underline{\textbf{Radius -3}}\\
Using equation \eqref{T+toT-b} of Lemma $\ref{T+toT-}$ and going modulo $(\wp,X_{r})$ we get
\begin{eqnarray}
    T^{+}f_{-2}&=& T^{+}\left[g_{1,1}^{1},\dfrac{1}{2}(XY^{r-1}-X^{2}Y^{r-2})+ \dfrac{2}{a_{2}^{2}}X^{2}Y^{r-2} + \dfrac{H(X,Y)}{a_{2}^{2}}\right] \nonumber\\
   &=& \begin{pmatrix}
       0 & 1\\
       1 & -1
   \end{pmatrix}T^{+}\left[g_{2,3}^{0}, \dfrac{(X^{r-1}Y-X^{r-2}Y^{2})}{2}+\dfrac{2X^{r-2}Y^2}{a_{2}^2}+ \dfrac{H(X,Y)}{a_{2}^2}\right] \nonumber\\
   &\equiv& \begin{pmatrix}
       0 & 1\\
       1 & -1
   \end{pmatrix}\left(\left[g_{3,3}^{0}, X^{r-1}Y + \dfrac{2r\theta X^{r-3}}{a_{2}^2}\right]+ \left[g_{3,2^2+3}^{0}, X^{r-1}Y+ \frac{2r\theta X^{r-3}}{a_{2}^2} \right]\right).\nonumber
   \end{eqnarray}
The above congruence follows from the part of \eqref{F3'T+f2} supported on $g_{3,3}^{0}$ and $g_{3,2^2+3}^{0}$ in the R.H.S of the tree. Since $\begin{pmatrix}
    0 & 1\\
    1 & -1
\end{pmatrix}g_{3,3}^{0} = g_{2,1}^{1}.w$ and $\begin{pmatrix}
    0 & 1\\
    1 & -1
\end{pmatrix}g_{3,2^{2}+3}^{0} = g_{2,3}^{1}w$, we have 
\begin{eqnarray}\label{F3'T+f-2}
 T^{+}f_{-2}&\equiv& \left[g_{2,1}^{1},XY^{r-1}+ \dfrac{-2r\theta Y^{r-3}}{a_{2}^{2}}\right]+ \left[g_{2,3}^{1},XY^{r-1}+ \dfrac{-2r\theta Y^{r-3}}{a_{2}^{2}}\right] \mod(\wp,X_{r}) \nonumber\\
 &\equiv& \left[g_{2,1}^{1},XY^{r-1}\right]+ \left[g_{2,3}^{1},XY^{r-1}\right] \mod(\wp,X_{r},V_{r}^{(2)}+\theta X_{r-3}),
 \end{eqnarray}
since $r$ is even. Further we have
\begin{eqnarray}\label{F3'a2f-3}
    -a_{2}f_{-3} &=&\left[g_{2,1}^{1},-X^{r}+X^2Y^{r-2}\right]+\left[g_{3,2}^{0}, -X^{r}+X^2Y^{r-2}\right] \nonumber\\
    &\equiv&\left[g_{2,1}^{1},X^2Y^{r-2}\right]+\left[g_{3,2}^{0},X^2Y^{r-2}\right]\mod(\wp,X_{r}).\end{eqnarray}

Combining \eqref{F3'T+f-2} and \eqref{F3'a2f-3}, we get
\begin{equation*}
 T^{+}f_{-2}-a_{2}f_{-3}\equiv \left[g_{2,1}^{1}, \theta Y^{r-3}\right]+\left[g_{2,3}^{1}, \theta Y^{r-3}\right] \equiv 0 \mod (\wp,X_{r},V_{r}^{(2)}+\theta X_{r-3})
    \end{equation*}
\noindent\underline{\textbf{Radius -4}}\\
Using the definition of the action of  $T^{+}$ in the L.H.S of the tree as in Lemma \ref{T}, we get that radius $-4$ of $(T-a_{2})f$ vanishes modulo $(\wp,X_{r},V_{r}^{(2)}+\theta X_{r-3})$.\\

From the calculations done in all the radii above, we note that $(T-a_2)f$ is integral and its reduction modulo $(\wp, X_r, \theta X_{r-3}+V_r^{(2)})$ is supported on radii $0,1$ and $-1$. To be more precise, on adding the congruences \eqref{sl1F3'rad0}, \eqref{sl1F3'rad1} and \eqref{sl1F3'rad-1} we have
\begin{eqnarray}\label{sl1V1even}
(T-a_{2})f&\equiv&\left[g_{1,0}^{0}, \theta X^{r-4}Y\right] + \left[g_{1,1}^{1},  \theta X^{r-4}Y\right] + \left[g_{0,0}^{1}, -\theta XY^{r-4}\right] \nonumber \\
&& -\dfrac{a_{2}}{2}\left[1, \theta X^{r-4}Y \right ] \mod (\wp,X_{r}, V_{r}^{(2)}+ \theta X_{r-3}).
\end{eqnarray} 

Thus the reduction of $(T-a_{2})f$ modulo $X_{r}$ lands in $\dfrac{V_{r}^{(1)}}{V_{r}^{(2)}}$ and maps to $\left(T-\dfrac{a_{2}}{2}\right)[1,1]$ in $\mathcal{I}(V_{1}) \cong  \mathcal{I}\left(\dfrac{V_{r}^{(1)}}{V_{r}^{(2)}+\theta X_{r-3}}\right)$  via the projection map given in Lemma \ref{Vr1/Vr2}. We recall that $X_{r} \subset (T-a_{2})\left(\mathcal{I}(\Sym^{r}(\bar{\Q}_{2}^{2})\right)$ as shown in the proof of Lemma \ref{RemarkBG09}. Thus we can conclude using \eqref{diag4} that $\left(T-\dfrac{a_{2}}{2}\right)\mathcal{I}(V_{1})$, which is generated as a $\bar{\F}_{2}[G]$-module by the element $\left(T-\dfrac{a_{2}}{2}\right)[1,1]$ in $\mathcal{I}(V_{1})$, is in the kernel of the projection map $\mathcal{I} \left(V_{0}\right) \to 
 \sF'_{3}$.
\end{proof}
Next, we want to determine the factor $\sF'_2$ of $\bar\Theta_{k,a_2}$. We begin by stating some technical results similar to Lemma \ref{sl1cmb1} and \ref{Heven} above. 
\begin{lemma}\label{sl1cmb2}
Let $r\geq 4$ and $t=v(r-2)$. Then  we have  
\begin{enumerate}
    \item 
$v\left({r-1 \choose j}\right)+j \geq t+1$ for all $j\geq2$, and \label{sl1cmb2a}
\item $v\left({r-2 \choose j}\right)+j \geq t+1$ for all $j\geq1$.\label{sl1cmb2b}
\end{enumerate}
\end{lemma}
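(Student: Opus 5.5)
The plan is to follow the proofs of Lemma \ref{CMB1} and Lemma \ref{sl1cmb1}: write each binomial coefficient as a falling factorial over a factorial, and use Legendre's formula $j - v(j!) = S_2(j)$. For $j$ beyond the top index the binomial coefficient vanishes, so the inequality is trivial there. We may therefore assume $2\leq j\leq r-1$ in part (1) and $1\leq j\leq r-2$ in part (2).

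For part (1) we have
\begin{equation*}
v\left({r-1 \choose j}\right)+j=\sum_{i=1}^{j}v(r-i)+S_2(j).
\end{equation*}
Since $j\geq 2$, the sum contains the term $i=2$, which contributes $v(r-2)=t$. The remaining terms are nonnegative and $S_2(j)\geq 1$, so the right side is at least $t+1$.

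For part (2) we have
\begin{equation*}
v\left({r-2 \choose j}\right)+j=\sum_{i=2}^{j+1}v(r-i)+S_2(j).
\end{equation*}
Since $j\geq 1$, the sum contains the term $i=2$, again giving $v(r-2)=t$. Together with $S_2(j)\geq 1$ this yields a bound of at least $t+1$.

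The only point needing care is the reindexing of the falling factorials, namely that $(r-1)!/(r-1-j)!=\prod_{i=1}^{j}(r-i)$ and $(r-2)!/(r-2-j)!=\prod_{i=2}^{j+1}(r-i)$. In both products the factor $r-2$ appears exactly under the stated lower bounds on $j$. The hypothesis $r\geq 4$ ensures that $r-2\neq 0$, so $t$ is finite, and that the ranges of $j$ are nonempty. There is no real obstacle beyond this bookkeeping.
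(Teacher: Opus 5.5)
Your proof is correct and is essentially the paper's argument. The paper gets part (2) and the $j\geq 3$ cases of part (1) by citing Lemma~\ref{CMB1} and Lemma~\ref{CMB2} with $r$ replaced by $r-1$, and checks $j=2$ separately; you instead inline the same Legendre-formula computation, and by noting that the factor $r-2$ occurs for every $j\geq 2$ you handle part (1) uniformly without the case split.
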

\begin{proof}
    The part (1) with $j>2$ and the part (2) both follow by applying Lemma \ref{CMB2} and Lemma \ref{CMB1} respectively with $r$ replaced by $r-1$ and $n$ by $j$. For $j=2$ in part (1), one notes that left hand side equals $v(r-1)+t+1$ and hence the inequality holds.
\end{proof}
\begin{lemma}\label{K}
For $r\geq 6$, let $K(X,Y):= \displaystyle\sum_{j=3}^{r-3}\left[{r-1 \choose j}+{r-2\choose j} \right]X^{r-j}Y^j$. 
Let $t:=v(r-2)$. Then we have,
\begin{enumerate}
    \item $K(X,2Y) \equiv \begin{cases}
        0\mod 2^{t+1} & \text{ if }r\text{ even},\\
        0\mod 2^2 & \text{ if }r\text{ odd }.
        \end{cases} $
    \item $K(X,2Y-X) \equiv\begin{cases}
    -rX^{r}-2^2X^{r-1}Y\mod 2^{t+1} & \text{if $r$ even,}\label{Kb}\\
     -2{r-1 \choose 2}X^{r} \mod 2^{2}& \text{if $r$ odd.}    \end{cases}$
    \item $K(2X,Y) \equiv \begin{cases}
        0\mod 2^{t+1} & \text{ if }r\text{ even},\\
        0\mod 2^2 & \text{ if }r\text{ odd }.
        \end{cases} $
    \end{enumerate}
\end{lemma}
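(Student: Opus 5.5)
All three parts come from expanding $K$ termwise and bounding the $2$-adic valuations of the coefficients with Lemma \ref{sl1cmb2}, as in the proofs of Lemmas \ref{H} and \ref{Heven}. Write $c_j:={r-1\choose j}+{r-2\choose j}$, so $K(X,Y)=\sum_{j=3}^{r-3}c_jX^{r-j}Y^j$. For odd $r$ we have $t=0$, so the even-$r$ bounds modulo $2^{t+1}$ only give congruences modulo $2$. In that case I will use the extra factor $2^j$ with $j\geq 3$ directly to get congruences modulo $2^2$.

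\textbf{Part (1).} $K(X,2Y)=\sum_{j=3}^{r-3}2^jc_jX^{r-j}Y^j$. By Lemma \ref{sl1cmb2}, $v({r-1\choose j})+j\geq t+1$ for $j\geq 2$ and $v({r-2\choose j})+j\geq t+1$ for $j\geq 1$. So every term is divisible by $2^{t+1}$. For odd $r$ the factor $2^j$ with $j\geq 3$ already gives divisibility by $2^2$.

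\textbf{Part (3).} $K(2X,Y)=\sum_{j=3}^{r-3}2^{r-j}c_jX^{r-j}Y^j$. Use the symmetries ${r-1\choose j}={r-1\choose r-1-j}$ and ${r-2\choose j}={r-2\choose r-2-j}$. Here $r-1-j\geq 2$, $r-2-j\geq 1$, and $r-j>r-1-j>r-2-j$. So Lemma \ref{sl1cmb2}, applied with index $r-1-j$ (respectively $r-2-j$), gives $v(2^{r-j}c_j)\geq t+1$. For odd $r$, the bound $r-j\geq 3$ again suffices.

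\textbf{Part (2).} This is the only part with real content. Expand
\[K(X,2Y-X)=\sum_{j=3}^{r-3}\sum_{i=0}^{j}(-1)^{j-i}2^i c_j{j\choose i}X^{r-i}Y^i,\]
and rewrite ${r-1\choose j}{j\choose i}={r-1\choose i}{r-1-i\choose j-i}$, and similarly for $r-2$. By Lemma \ref{sl1cmb2}, $2^i{r-1\choose i}$ vanishes modulo $2^{t+1}$ for $i\geq 2$, and $2^i{r-2\choose i}$ vanishes for $i\geq 1$. So only the following survive:
\begin{itemize}
\item the $X^r$ coefficient $\sum_{j=3}^{r-3}(-1)^jc_j$;
\item the $X^{r-1}Y$ coefficient $2(r-1)\sum_{j=3}^{r-3}(-1)^{j-1}{r-2\choose j-1}$, coming from the $r-1$ piece only.
\end{itemize}

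Each sum is a truncated alternating binomial sum. Since the complete alternating sums vanish, each equals minus its boundary terms, with $j\in\{0,1,2,r-2,r-1\}$ for ${r-1\choose j}$ and $j\in\{0,1,2,r-2\}$ for ${r-2\choose j}$; the shifted sum is handled the same way. This gives closed expressions in $r$ built from $1$, $r$, ${r-1\choose 2}$ and ${r-2\choose 2}$.

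For even $r$, I then reduce these modulo $2^{t+1}$ using $r\equiv 2 \bmod 2^t$. The aim is that the $X^r$ coefficient becomes $-r$ and the $X^{r-1}Y$ coefficient becomes $-2^2$; the latter should follow from $2(r-1)\cdot(\text{odd-looking boundary value})$ with $v(r-2)=t$. For odd $r$, I redo the count modulo $2^2$. Here the $i=1$ terms carry $2(r-1)$, which is divisible by $4$, and the $r-2$ piece contributes only even terms to the $X^r$ coefficient. The claim to verify is that the $X^r$ coefficient reduces to $-2{r-1\choose 2}$.

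The main obstacle is this boundary-term bookkeeping and checking the final residues. Specifically, I must confirm that the combination for the $X^r$ coefficient is exactly $-r$ modulo $2^{t+1}$ and not merely $\equiv -r \bmod 2^t$, which needs care with ${r-1\choose 2}$ and ${r-2\choose 2}$ when $r\equiv 2\bmod 2^t$. If the $X^r$ residue comes out wrong, I will first recheck the sign $(-1)^{r-2}$ on the boundary terms and the parity assumptions in that bookkeeping.
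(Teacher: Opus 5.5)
Your route is the paper's own: expand termwise, discard terms with Lemma \ref{sl1cmb2}, and evaluate the surviving truncated alternating sums through their boundary terms. Parts (1) and (3) are correct as written, and so is your list of surviving terms for even $r$. The gap is that part (2), the only part with content, is never carried out: you stop at ``the aim is'' and leave the residues as the main obstacle.

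For even $r$, the missing step is the identity $\binom{r-1}{2}+\binom{r-2}{2}=(r-2)^2$. The boundary terms give $\sum_{j=3}^{r-3}(-1)^j\binom{r-1}{j}=-\binom{r-1}{2}$ and $\sum_{j=3}^{r-3}(-1)^j\binom{r-2}{j}=r-4-\binom{r-2}{2}$. So the $X^r$ coefficient is exactly $r-4-(r-2)^2$, which is the paper's $(2r-5)-(r-2)^2+(1-r)$. Since $t\geq 1$, we have $(r-2)^2\equiv 0$ and $r-4=-r+2(r-2)\equiv -r \pmod{2^{t+1}}$; this settles your worry about $2^t$ versus $2^{t+1}$. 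For the $X^{r-1}Y$ coefficient, your expected ``odd-looking'' boundary value points the wrong way: an odd value would make the coefficient $\equiv 2\pmod 4$. In fact $\sum_{j'=2}^{r-4}(-1)^{j'}\binom{r-2}{j'}=2(r-3)$ is even. The coefficient is then $2(r-1)\cdot 2(r-3)=4(r-2)^2-4\equiv -4 \pmod{2^{t+1}}$.

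For odd $r$, both of your shortcuts are wrong. First, not all $i=1$ terms carry $2(r-1)$. The $\binom{r-2}{j}$ piece contributes $2(r-2)\sum_{j'=2}^{r-4}(-1)^{j'}\binom{r-3}{j'}$ to the $X^{r-1}Y$ coefficient, and $2(r-2)\equiv 2\pmod 4$. This term vanishes modulo $4$ only because the sum equals $r-5$, which is even. Second, the $\binom{r-2}{j}$ piece's contribution to the $X^r$ coefficient is $(r-2)-\binom{r-2}{2}$. This is not even in general (it is $-5$ for $r=7$), and even terms could not be dropped modulo $4$ anyway. The correct bookkeeping gives $\sum_{j=3}^{r-3}(-1)^j\binom{r-1}{j}=2r-4-\binom{r-1}{2}$, so the $X^r$ coefficient is $(r-2)(5-r)$. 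This differs from $-2\binom{r-1}{2}=-(r-1)(r-2)$ by $4(r-2)\equiv 0\pmod 4$, as required. With these evaluations inserted, your outline becomes a complete proof, and it is more explicit than the paper's ``similar and simpler calculation'' for odd $r$.
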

\begin{proof}
 We prove the lemma for $r$ even. For $r$ odd, the claim follows from a similar and simpler calculation. The first part and third part of the lemma for even $r$ are direct consequences of Lemma \ref{sl1cmb2}.
 For the second part, we compute
 \begin{eqnarray}
     && K(X,2Y-X) = \displaystyle\sum_{j=3}^{r-3}\left[{r-1 \choose j}+{r-2\choose j} \right]X^{r-j}(2Y-X)^j \nonumber\\
     &=& \displaystyle\sum_{j=3}^{r-3}\left[{r-1 \choose j}+{r-2\choose j} \right]\sum_{i=0}^{j}2^{i}(-1)^{j-i}{j \choose i}X^{r-i}Y^{i}\nonumber \\
     &=& \sum_{i=0}^{r-3}
     2^i\left[{r-1 \choose i}\sum_{j'\geq 0,\, 3-i}^{r-3-i}(-1)^{j'}{r-1-i\choose j'}
     +{r-2\choose i}\sum_{j'\geq 0,\, 3-i}^{r-3-i}(-1)^{j'}{r-2-i\choose j'}\right] X^{r-i}Y^{i}\nonumber\\
     &\equiv & 
     \left(\sum_{j'=3}^{r-3}(-1)^{j'}\left({r-1\choose j'}+{r-2\choose j'}\right)\right) X^{r}  +
     \left(2(r-1)\sum_{j'=2}^{r-4}(-1)^{j'}{r-2\choose j'}\right)X^{r-1}Y\mod 2^{t+1} \nonumber\\
     &\equiv & 
     \left(\sum_{j'=3}^{r-3}(-1)^{j'}\left({r-1\choose j'}+{r-2\choose j'}\right)\right) X^{r} +
     \left(2\sum_{j'=2}^{r-4}(-1)^{j'}{r-2\choose j'}\right)X^{r-1}Y\mod2^{t+1}, \label{Kproof}
\end{eqnarray}
where the second last congruence above follows from Lemma \ref{sl1cmb2}. We shall now compute the coefficient of $X^r$ in the expression above as
\begin{eqnarray*}
   \sum_{j'=3}^{r-3}(-1)^{j'}\left({r-1\choose j'}+{r-2\choose j'}\right) 
  & =& (2r-5)-(r-2)^2 +(1-r) \\
  &\equiv &-1-(r-2)^2+(1-r)\mod 2^{t+1}\\
   &\equiv & 
       -r \mod 2^{t+1}
   \end{eqnarray*}
   Similarly the coefficient of $X^{r-1}Y$ above can be simplified as
   \begin{eqnarray*}
   2\sum_{j'=2}^{r-4}(-1)^{j'}{r-2\choose j'}\equiv 
       -2^2 \mod 2^{t+1}. 
   \end{eqnarray*}
By substituting these two coefficients in \eqref{Kproof} above, we obtain part \eqref{Kb} of the lemma for $r$ even. 
\end{proof}

Define $\alpha := \dfrac{a_{2}^2-2^2{r\choose2}}{2a_{2}}, \:\tau:= v(\alpha)$ and $t:=v(r-2).$ Let $\beta_{0}$ be defined as 
$$\beta_{0} = \begin{cases}
    \alpha & \text{ if } \tau \leq t-1\\
    \frac{r-2}{2} & \text{ if } \tau>t-1
\end{cases}$$
Note that $v(\beta_{0})= \min(\tau,t-1)$. 
\begin{remark}\label{remparityslope1}
   It is clear from the definition that $\tau\geq 0$. Further if $r$ is odd, then $t=0$ and hence $\tau> t-1$. Therefore if $\tau \leq t-1$, then $r$ has to be even. 
\end{remark}

\begin{lemma}\label{sl1par}  
Let $v(a_2)=1$ and let $\alpha$ and $\beta_0$ be defined as above. Then 
\begin{enumerate}
      \item the quantity $c_{1}:=\frac{a_{2}^2-2r}{a_{2}^2\beta_{0}}$ is always integral and further we have $$c_{1}
      \equiv\begin{cases}
      \frac{2}{a_2}\mod\wp &\text{ if } \tau\leq t-1\\
      \frac{2^2r}{a_2^2}\mod\wp
      &\text{ if } \tau > t-1,
           \end{cases}
    $$ \label{sl1par1}
   \item and the quantity $c_{2}:=\frac{a_{2}^2-2^2}{2a_{2}\beta_{0}}$ is always integral and  further we have $$ c_{2}
   \equiv\begin{cases}
   1+\frac{(r-2)}{a_2\alpha}\mod\wp &\text{ if }\tau\leq t-1\\
   \frac{2(r+1)}{a_2}\mod\wp &\text{ if }\tau> t-1.
   \end{cases}
   $$\label{sl1par2} 
   \end{enumerate} 
\end{lemma}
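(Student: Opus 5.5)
The proof is a direct valuation computation. In each case I rewrite the numerator so that $2a_2\alpha$ appears as one summand, and then split according to the definition of $\beta_0$. Throughout I use $v(a_2)=1$, $v(\beta_0)=\min(\tau,t-1)$, $2^2{r\choose 2}=2r(r-1)$, and Remark \ref{remparityslope1}: if $\tau\leq t-1$ then $r$ is even, so $v(r)\geq 1$.

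For part \eqref{sl1par1}, write
\[
a_2^2-2r = \bigl(a_2^2-2r(r-1)\bigr)+2r(r-2) = 2a_2\alpha + 2r(r-2),
\]
so that
\[
c_1=\frac{2\alpha}{a_2\beta_0}+\frac{2r(r-2)}{a_2^2\beta_0}.
\]
If $\tau\leq t-1$, then $\beta_0=\alpha$. The first term is $2/a_2$, which is a unit. The second term has valuation
\[
1+v(r)+t-2-\tau\;\geq\; v(r)\;\geq\;1,
\]
so it vanishes mod $\wp$. If $\tau>t-1$, then $\beta_0=(r-2)/2$. The first term has valuation $1+\tau-1-(t-1)=\tau-t+1>0$, so it vanishes mod $\wp$. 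The second term simplifies exactly to $2^2r/a_2^2$, which is integral. In both cases $c_1$ is integral with the stated residue.

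For part \eqref{sl1par2}, write
\[
a_2^2-2^2 = 2a_2\alpha+2r(r-1)-4 = 2a_2\alpha+2(r-2)(r+1),
\]
so that
\[
c_2=\frac{\alpha}{\beta_0}+\frac{(r-2)(r+1)}{a_2\beta_0}.
\]
If $\tau\leq t-1$, the first term is $1$. The second term $\frac{(r-2)(r+1)}{a_2\alpha}$ has valuation $t-1-\tau\geq 0$, so it is integral. Write it as
\[
\frac{(r-2)(r+1)}{a_2\alpha}=\frac{r-2}{a_2\alpha}+\frac{r(r-2)}{a_2\alpha}.
\]
The correction term $\frac{r(r-2)}{a_2\alpha}$ has valuation $v(r)+t-1-\tau\geq v(r)\geq 1$, since $r$ is even, so it vanishes mod $\wp$. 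This gives $c_2\equiv 1+\frac{r-2}{a_2\alpha}$.

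If $\tau>t-1$, the first term $\alpha/\beta_0$ has valuation $\tau-(t-1)>0$, so it vanishes mod $\wp$. The second term equals $2(r+1)/a_2$ exactly, which is integral.

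The only point needing care is that $v(r)\geq 1$ in the case $\tau\leq t-1$; this is exactly what Remark \ref{remparityslope1} supplies. For $r$ odd one has $t=0$, so only the second case arises, and the argument above holds there without change, including the fact that $v(\beta_0)=-1$.
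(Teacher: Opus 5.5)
Your proof is correct: the decompositions $a_2^2-2r=2a_2\alpha+2r(r-2)$ and $a_2^2-2^2=2a_2\alpha+2(r-2)(r+1)$, combined with $v(\beta_0)=\min(\tau,t-1)$ and the evenness of $r$ when $\tau\leq t-1$, give exactly the routine valuation check that the paper leaves to the reader. The only unstated assumption is $\beta_0\neq 0$, which fails only in the degenerate case $r=2$, $\alpha=0$; that case never arises where the lemma is applied ($r\geq 4$).
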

\begin{proof}
It is an easy check and left to the reader.
\end{proof}

\begin{prop}\label{F2'}
    Let $r\geq6$ be even, $v(a_{2})=1$ and let $\tau$ and $t$ be as above.
    \begin{enumerate}
        \item If $\tau\leq t-1$, then $\mathcal{F'}_{2}$ is a quotient of $\dfrac{\mathcal{I}(V_{1})}{(T^{2}-c_{0}T+1)}$ where $c_{0} = \overline{\dfrac{a_{2}}{2} + \dfrac{r-2}{2\alpha}}$ .
        \item If $\tau>t-1$, then $\mathcal{F'}_{2}$ is a quotient of $\dfrac{\mathcal{I}(V_{1})}{(T)}$.                             \end{enumerate}
\end{prop}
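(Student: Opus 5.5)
The plan mirrors the proof of Theorem \ref{irred}. We construct an explicit function $f=\sum_n f_n\in\mathcal{I}(\Sym^r\bar\Q_2^2)$, supported on a few radii around the centre of the tree. We choose it so that $(T-a_2)f$ is integral and its reduction lies in $\mathcal{I}(V_r^{(1)})$ modulo $\wp$, $X_r$ and $V_r^{(2)}$. Under the injection $V_1\hookrightarrow V_r^{(1)}/V_r^{(2)}$ of Lemma \ref{Vr1/Vr2} (sending $X\mapsto\overline{\theta X^{r-3}}$, $Y\mapsto\overline{\theta Y^{r-3}}$), it should be the image of $(T^2-c_0T+1)[1,X]$ in case (1), and of $T[1,X]$ in case (2). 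Since $\mathcal{I}(X_r)\subseteq\ker P$ by Lemma \ref{RemarkBG09}, the $G$-linearity of $T$ and the diagram \eqref{diag4} then show that $\sF'_2$ is a quotient of $\mathcal{I}(V_1)/(T^2-c_0T+1)$, respectively $\mathcal{I}(V_1)/(T)$. Here we use that $[1,X]$ generates $\mathcal{I}(V_1)$.

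For the construction we normalise by $\beta_0$, exactly as $\beta$ was used in \S\ref{slope<1}. At the centre we place a term like $[1,\,K(X,Y)/(a_2\beta_0)+\dots]$, and at radius $\pm1$ terms like $[g_{1,\lambda}^0,\,(\text{polynomial in }X^{r-1}Y,X^{r-2}Y^2,H)/(2\beta_0)]$ together with their $w$-images on the left. The polynomial $K$ of Lemma \ref{K} is designed so that $K(X,2Y)$ and $K(2X,Y)$ vanish modulo $2^{t+1}$, while $K(X,2Y-X)\equiv -rX^r-4X^{r-1}Y$. Hence $T^+$ of the central term produces, on the branch $g_{1,1}^0$, the monomials whose coefficients combine into $\alpha$. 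We add a geometric tail $f_{\pm n}$ for $n\ge 2$ with coefficients $(2r/a_2)^{n-2}$ or similar. As in Theorem \ref{irred}, this tail makes the radius-$n$ contributions vanish modulo $\wp$ and $\theta X_{r-3}$-type terms for $|n|\geq3$.

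The radius-by-radius computation uses Lemma \ref{T} on the right half and Lemma \ref{T+toT-} to transport it to the left half. It also uses Lemma \ref{Heven} for $H$ (with $t\ge1$ since $r$ is even), Lemma \ref{sl1cmb2}, and Lemma \ref{K}. The integrality and the residues of the coefficients that appear are governed by $c_1$ and $c_2$ of Lemma \ref{sl1par}. Concretely, we expect the radius $0$ part to reduce to $c_2$ times a generator, and the radius $\pm1$ parts to reduce to $\overline{\alpha/\beta_0}$ or to $1$. When $\tau\le t-1$ we have $\beta_0=\alpha$, so $\alpha/\beta_0=1$, and Lemma \ref{sl1par}(2) gives $c_2\equiv 1+(r-2)/(a_2\alpha)$. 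After rescaling by the unit $a_2/2$, this yields exactly $c_0=\overline{a_2/2+(r-2)/(2\alpha)}$, and the radius $\pm2$ pieces give the constant $1$ in $T^2-c_0T+1$. When $\tau>t-1$ we have $\beta_0=(r-2)/2$, so $\alpha/\beta_0\equiv0$. In this case the radius-$\pm2$ and radius-$0$ contributions should cancel, because $c_1\equiv 4r/a_2^2\equiv0$ for $r$ even. Only the radius-$\pm1$ part survives, giving $T[1,X]$.

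The main obstacle is finding the correct correction terms so that every radius is simultaneously integral. The denominators $\beta_0$ can have valuation as large as $t-1$, and the only cancellations available come from Lemma \ref{sl1cmb2} modulo $2^{t+1}$, so the precise multiples of $K$, $H$ and $F$ at radii $0$ and $\pm1$ must be tuned. I would first try to fix the radius-$0$ and radius-$\pm1$ components so that the $X^r$ and $X^{r-1}Y$ coefficients produce $a_2^2-4\binom r2=2a_2\alpha$. Then I would check the remaining radii, and adjust the tail if the radius-$2$ computation leaves non-integral residues.
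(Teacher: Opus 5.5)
\textbf{There is a genuine gap: the proposal never constructs $f$, and the coefficient bookkeeping meant to guide the construction is wrong.}

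Your framework matches the paper's. You want an explicit $f$ with $(T-a_2)f$ integral, whose reduction modulo $(\wp,X_r+V_r^{(2)})$ lies in $\mathcal{I}(V_1)$ and equals a unit times $(T^2-c_0T+1)[1,X]$, resp.\ $T[1,X]$. But the proposition consists precisely of writing down that $f$ and checking it radius by radius, and you leave this as "the main obstacle".

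Your heuristics for where the coefficients should land would steer the search to the wrong target. On the right half, $T^-$ substitutes $X\mapsto 2X$, so it kills $[g,X]$ modulo $2$. Hence $T[1,X]=[g^0_{1,0},X]+[g^0_{1,1},X]$ and $T^2[1,X]=\sum_{\lambda\in I_2}[g^0_{2,\lambda},X]$. So $u(T^2-c_0T+1)[1,X]$ has no support on the left half, and its coefficients at radii $0,1,2$ are $u,\ uc_0,\ u$. Your prediction does not have this shape:
\begin{itemize}
\item In case (1) you put $c_2$ at radius $0$ and $\overline{\alpha/\beta_0}=1$ at radius $\pm1$. After rescaling by $a_2/2$, this places $c_0$ in the constant term rather than in front of $T$. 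You also expect surviving pieces at negative radii, which the target does not have.
\item In case (2) you say radius $0$ dies because $c_1\equiv0$. That contradicts attaching $c_2$ to radius $0$, since $c_2\equiv 2(r+1)/a_2$ is a unit there.
\item Also in case (2), if radius $1$ carried $\overline{\alpha/\beta_0}\equiv0$, it would die too, and nothing would give $T[1,X]$.
\end{itemize}

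The correct assignment is that radii $0$ and $2$ carry $c_1$ and radius $1$ carries $c_2$. Your observations that $\tfrac{a_2}{2}c_2\equiv c_0$ in case (1), and that $c_1\equiv0$ in case (2), are right; they just belong to different radii. The paper's function is as follows.
\begin{itemize}
\item The centre term is $f_0=\bigl[1,\tfrac{X^r+X^{r-1}Y+X^2Y^{r-2}}{a_2\beta_0}+\tfrac{2r}{a_2^3\beta_0}K\bigr]$.
\item The radius-$1$ term is $f_1=[g^0_{1,0},Q]-[g^0_{1,1},Q]$, with $Q=-\tfrac{H}{a_2^2\beta_0}+\tfrac{X^{r-1}Y}{2\beta_0}+\tfrac{c_2}{a_2}X^{r-2}Y^2$.
\item The right tail is $\tfrac{a_2^{n-3}}{\beta_0}(Y^r-X^{r-2}Y^2)$ on $g^0_{n,0},g^0_{n,2},g^0_{n,1},g^0_{n,3}$, with signs $+,-,-,+$, for $2\le n\le t+2$.
\item The left tail is $-\tfrac{a_2^{n-1}}{\beta_0}[g^1_{n-1,0},X^r-X^2Y^{r-2}]$ for $1\le n\le t$. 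It is not a $w$-image of the right half. It is needed for integrality of the $X^r$-coefficient at radius $0$ and of the $X^2Y^{r-2}$-coefficient at radius $-1$, and all negative radii then vanish modulo $\wp$.
\end{itemize}

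The $K$-term plays two roles.
\begin{itemize}
\item At radius $0$, it cancels the $\tfrac{2r}{a_2^2\beta_0}K$ that $T^-f_1$ produces from $H(2X,X+Y)$ (Lemma \ref{Heven}).
\item Via Lemma \ref{K}, together with $T^-f_2$, it makes the $X^r$-coefficient at $g^0_{1,1}$ equal to $\tfrac{2(a_2^2-r^2)}{a_2^3\beta_0}=\tfrac{4\alpha}{a_2^2\beta_0}+\tfrac{2r(r-2)}{a_2^3\beta_0}$. This is integral because $v(\beta_0)=\min(\tau,t-1)$ and $r$ is even.
\end{itemize}

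With this $f$, the reduction is, up to sign, $c_1\theta X^{r-3}$, $c_2\theta X^{r-3}$ and $c_1\theta X^{r-3}$ at radii $0,1,2$.
\begin{itemize}
\item The dichotomy comes from $c_1\equiv 2/a_2$ in case (1) versus $c_1\equiv 4r/a_2^2\equiv0$ in case (2).
\item The radius-$1$ coefficient $c_2=\tfrac{\alpha}{\beta_0}+\tfrac{(r-2)(r+1)}{a_2\beta_0}$ is a unit in both cases: $c_2\equiv\tfrac{2}{a_2}c_0$ in case (1), and $c_2\equiv\tfrac{2(r+1)}{a_2}$ in case (2).
\end{itemize}
This yields $\tfrac{2}{a_2}(T^2-c_0T+1)[1,X]$ in case (1) and $\tfrac{2}{a_2}T[1,X]$ in case (2).
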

\begin{proof}
Consider function $f=\underset{n\in\Z}\sum f_n$, where $f_n$'s are defined as follows:
\begin{eqnarray}
f_{0}&=& \left[1, \frac{X^{r}+X^{r-1}Y+X^{2}Y^{r-2}}{a_{2}\beta_{0}} + \frac{2r}{a_{2}^{3}\beta_{0}}K(X,Y)\right]\nonumber\\
f_{1} &=& \left[g_{1,0}^{0}, \dfrac{-H(X,Y)}{a_{2}^{2}\beta_{0}}+ \dfrac{X^{r-1}Y}{2\beta_{0}}+ \dfrac{c_{2}}{a_{2}}X^{r-2}Y^{2}\right]\label{F2'eqeven}\\
& & - \left[g_{1,1}^{0},  \dfrac{-H(X,Y)}{a_{2}^{2}\beta_{0}}+ \dfrac{X^{r-1}Y}{2\beta_{0}}+  \dfrac{c_{2}}{a_{2}}X^{r-2}Y^{2}                \right]  \nonumber\\
f_{n} &=& \left[g_{n,0}^{0}, \dfrac{a_{2}^{n-3}}{\beta_{0}}(Y^{r}-X^{r-2}Y^{2})\right] -  \left[g_{n,2}^{0}, \dfrac{a_{2}^{n-3}}{\beta_{0}}(Y^{r}-X^{r-2}Y^{2})\right] \nonumber\\ 
& & - \left[g_{n,1}^{0}, \dfrac{a_{2}^{n-3}}{\beta_{0}}(Y^{r}-X^{r-2}Y^{2})\right] +  \left[g_{n,3}^{0}, \dfrac{a_{2}^{n-3}}{\beta_{0}}(Y^{r}-X^{r-2}Y^{2})\right]\text{ for } 2\leq n \leq t+2 \nonumber\\
f_{-n} &= &\left[g_{n-1,0}^{1}, -\dfrac{a_{2}^{n-1}}{\beta_{0}}(X^{r}-X^{2}Y^{r-2})\right] \text{ for }\,1 \leq n\leq t,\nonumber
\end{eqnarray}
where $K(X,Y)$ is as defined in Lemma \ref{K} and $c_{2}$ is as in Lemma \ref{sl1par}.
We now compute  $(T-a_{2})f$ radius by radius.

\noindent\underline{\textbf{Radius 0}}\\
 Applying Lemma \ref{T}, we get 
\begin{equation}\label{F2'T-f-1}
    T^{-}f_{-1} \equiv 
        \left[1, \dfrac{-X^r}{\beta_{0}}\right]\mod\wp,
\end{equation}
since $v(\beta_0)\leq t-1<r-2$.
Further we have ,
\begin{equation}\label{F2'a2f0}
 -a_{2}f_{0} = \left[1, \dfrac{-\left(X^{r}+X^{r-1}Y+X^{2}Y^{r-2}\right)}{\beta_{0}} -\dfrac{2r}{a_{2}^2\beta_{0}}K(X,Y) \right].
 \end{equation}
We now calculate $T^{-}f_{1}$ using Lemma \ref{Heven} and the condition that $r$ is even:
\begin{eqnarray}\label{F2'T-f1}      T^{-}f_1 &\equiv& \left[1, \dfrac{-H(2X,Y)}{a_{2}^2\beta_{0}}+\dfrac{H(2X,X+Y)}{a_{2}^2\beta_{0}}\right] \mod \wp \nonumber\\
    &\equiv&  \left[1, -\dfrac{(2rXY^{r-1}+2r    X^{2}Y^{r-2})}{a_{2}^2\beta_{0}}+\dfrac{2rX(X+Y)^{r-1}+2rX^{2}(X+Y)^{r-2}} {a_{2}^2\beta_{0}}\right]\mod\wp \nonumber\\
    &=&\left[1, \frac{2r}{a_2^2\beta_0}\left(2X^r+(2r-3)X^{r-1}Y+(r-2)^2X^{r-2}Y^2+K(X,Y)+(r-1)X^2Y^{r-2}\right)\right] \nonumber\\
    &\equiv&\left[1, \dfrac{2^2rX^r}{a_{2}^2\beta_{0}} + \dfrac{2r(X^{r-1}Y+X^{2}Y^{r-2})}{a_{2}^2\beta_{0}}+\dfrac{2r}{a_{2}^2\beta_{0}}K(X,Y)\right]\mod\wp.
\end{eqnarray}
The last congruence follows reducing the previous expression modulo $\wp$, using the fact that for $r$ even, the valuation $v\left(\frac{2r(r-2)}{a_2^2\beta_0}\right)= t-1-v(\beta_0)+v(r)\geq v(r)>0$.


 On adding \eqref{F2'T-f-1},\eqref{F2'a2f0} and \eqref{F2'T-f1}, we get that,
 \begin{eqnarray}
T^{-}f_{-1}-a_{2}f_{0}+T^{-}f_{1} &\equiv& \left[1, -\dfrac{(a_{2}^2-2r)}{a_{2}^2\beta_{0}}(X^{r-1}Y+X^2Y^{r-2}+2X^r)\right] \nonumber\\
&\equiv& \left[1, -c_{1}(X^{r-1}Y-X^2Y^{r-2})\right]\mod\wp \nonumber
\end{eqnarray}
 since $c_{1}$ is integral by Lemma \ref{sl1par}.  
By using the method of alternating sum, we note that $X^{r-1}Y-X^{2}Y^{r-2} \equiv \theta X^{r-3} + \theta \displaystyle\sum_{j=1}^{r-4}X^{r-3-j}Y^{j} \equiv \theta X^{r-3} \mod V_{r}^{(2)}$ since $r$ is even. 
\begin{equation}\label{sl1F2'rad0}
T^{-}f_{-1}-a_{2}f_{0}+T^{-}f_{1} \equiv \begin{cases} 
\left[   1, -\dfrac{2}{a_{2}}\theta X^{r-3}\right] \mod(\wp, V_{r}^{(2)}+X^{r}) & \text{ if }\tau\leq t-1,\\
0 \mod(\wp, V_{r}^{(2)}+X^{r}) & \text{ if } \tau>t-1. 
\end{cases}
\end{equation}
\noindent\underline{\textbf{Radius 1}}\\
Using Lemma \ref{sl1cmb2} and Lemma \ref{K} along with the definition of $T^+$, we compute
\begin{eqnarray}\label{F2'T+f0}
    T^{+}f_{0} &\equiv& \left[g_{1,0}^{0}, \dfrac{X^{r}+ 2X^{r-1}Y}{a_{2}\beta_{0}}\right] + \left[g_{1,1}^{0}, \dfrac{X^{r}+2X^{r-1}Y}{a_{2}\beta_{0}}+\dfrac{2r}{a_{2}^{3}\beta_{0}}K(X,2Y-X) \right] \nonumber\\
    &\equiv& \left[g_{1,0}^{0},\dfrac{X^{r}+ 2X^{r-1}Y}{a_{2}\beta_{0}}\right]+ \left[g_{1,1}^{0}, \dfrac{X^{r}+ 2X^{r-1}Y}{a_{2}\beta_{0}} + \dfrac{-2r^2X^{r}-2^3rX^{r-1}Y}{a_{2}^3\beta_{0}}   \right] \nonumber\\
    &\equiv& \left[g_{1,0}^{0},\dfrac{X^{r}+ 2X^{r-1}Y}{a_{2}\beta_{0}}\right]+ \left[g_{1,1}^{0}, \dfrac{(a_{2}^2-2r^2)X^{r}}{a_{2}^3\beta_{0}} + \dfrac{(2a_{2}^2-2^3r)X^{r-1}Y}{a_{2}^3\beta_{0}}   \right] \nonumber\\
   &\equiv& \left[g_{1,0}^{0},\dfrac{X^{r}+ 2X^{r-1}Y}{a_{2}\beta_{0}}\right]+ \left[g_{1,1}^{0}, \dfrac{(a_{2}^2-2r^2)X^{r}}{a_{2}^3\beta_{0}} - \dfrac{2X^{r-1}Y}{a_{2}\beta_{0}}   \right]   \mod \wp.
    \end{eqnarray}
The last congruence follows from Lemma \ref{sl1par} part \eqref{sl1par1}. 
Next we have
\begin{eqnarray}\label{F2'a2f1}
    -a_{2}f_{1} &=& \left[g_{1,0}^{0}, \dfrac{H(X,Y)}{a_{2}\beta_{0}}-\dfrac{a_{2}X^{r-1}Y}{2\beta_{0}} -c_{2}X^{r-2}Y^{2}\right] \nonumber\\
    &&+ \left[g_{1,1}^{0},-\dfrac{H(X,Y)}{a_{2}\beta_{0}}+\dfrac{a_{2}X^{r-1}Y}{2\beta_{0}} +c_{2}X^{r-2}Y^{2}\right]. 
\end{eqnarray}
Now using the definition of $T^-$, we compute
\begin{equation}\label{F2'T-f2}
    T^{-}f_{2} = \left[g_{1,0}^{0}, \dfrac{-X^{r}-H(X,Y)}{a_{2}\beta_{0}}\right] + \left[g_{1,1}^{1}, \dfrac{X^{r}+H(X,Y)}{a_{2}\beta_{0}}\right].
\end{equation}

On adding up \eqref{F2'T+f0},\eqref{F2'a2f1} and \eqref{F2'T-f2} we get modulo $\wp$,
\begin{equation*}
  T^{+}f_{0}-a_{2}f_{1}+T^{-}f_{2}\equiv \left[g_{1,0}^{0}, -c_{2}\theta X^{r-3}\right] + \left[g_{1,1}^{0}, \dfrac{2(a_{2}^2-r^2)X^{r}}{a_{2}^3\beta_{0}}+ c_{2}\theta X^{r-3} \right].
    \end{equation*}
     The coefficient of $X^{r}$ above is congruent to $\frac{2^2a_{2}\alpha-2r(r-2)}{a_{2}^3\beta_{0}} = \frac{2^2\alpha}{a_{2}^2\beta_{0}}-\frac{2r(r-2)}{a_{2}^3\beta_{0}} \mod \wp$. Since $v(\beta_{0})= \min(\tau,t-1)$, this coefficient is integral. Thus going modulo $(\wp,V_{r}^{(2)}+X_{r})$ and further applying  Lemma \ref{sl1par} \eqref{sl1par2} for $r$  even, we have 
    \begin{eqnarray}\label{sl1F2'rad1}
   T^{+}f_{0}-a_{2}f_{1}+T^{-}f_{2}
    &\equiv &\begin{cases} 
    \left[g_{1,0}^{0}, -\left(1+\frac{r-2}{a_{2}\alpha} \right)\theta X^{r-3}\right] + \left[g_{1,1}^{0}, \left(1+\frac{r-2}{a_{2}\alpha}\right)\theta X^{r-3} \right] & \text{ if } \tau\leq t-1,\\
     \left[g_{1,0}^{0}, \frac{-2\theta X^{r-3}}{a_{2}}\right] + \left[g_{1,1}^{0}, \frac{2\theta X^{r-3}}{a_{2}}\right]  &\text{ if } \tau>t-1.
     \end{cases}\nonumber\\
     &&
     \end{eqnarray}
\noindent\underline{\textbf{Radius $n\geq 2$}}\\
We now compute $T^{+}f_{n-1}$ for $n\geq 3.$ In  this computation, the second branch of $T^{+}$ with the action $(X,Y)\to(X,2Y-X)$ vanishes  by \ref{sl1cmb2}\eqref{sl1cmb2b}, and we have
\begin{eqnarray}\label{F2'T+fn-1}
  T^{+}f_{n-1} &\equiv& \left[g_{n,0}^{0}, \dfrac{-2^2a_{2}^{n-4}X^{r-2}Y^2}{\beta_{0}}\right] +\left[g_{n,2}^{0}, \dfrac{2^2a_{2}^{n-4}X^{r-2}Y^2}{\beta_{0}}\right]\nonumber\\
  &&+\left[g_{n,1}^{0}, \dfrac{2^2a_{2}^{n-4}X^{r-2}Y^2}{\beta_{0}}\right] +\left[g_{n,3}^{0}, \dfrac{-2^2a_{2}^{n-4}X^{r-2}Y^2}{\beta_{0}}\right] \mod\wp.
\end{eqnarray}
%

However for n=2, the calculation is more involved and we have
\begin{eqnarray*}
    T^{+}f_{1} &=& \left[g_{2,0}^{0}, \dfrac{-H(X,2Y)}{a_{2}^2\beta_{0}}+\dfrac{X^{r-1}Y}{\beta_{0}} +\frac{2^2c_{2}}{a_{2}}X^{r-2}Y^2\right]\\ 
    &&+ \left[g_{2,2}^{0}, \dfrac{-H(X,2Y-X)}{a_{2}^2\beta_{0}}+\dfrac{X^{r-1}(2Y-X)}{2\beta_{0}} +\dfrac{c_{2}}{a_{2}}X^{r-2}(2Y-X)^2\right] \\
    &&-\left[g_{2,1}^{0}, \dfrac{-H(X,2Y)}{a_{2}^2\beta_{0}}+\dfrac{X^{r-1}Y}{\beta_{0}}+\frac{2^2c_{2}}{a_2}X^{r-2}Y^2\right] \\ 
    &&-\left[g_{2,3}^{0}, \dfrac{-H(X,2Y-X)}{a_{2}^2\beta_{0}}+\dfrac{X^{r-1}(2Y-X)}{2\beta_{0}}+\dfrac{c_{2}}{a_{2}}X^{r-2}(2Y-X)^2\right]. 
\end{eqnarray*}
By using Lemma \ref{Heven} for $r$ even and part \eqref{sl1par2} of Lemma \ref{sl1par}, we get modulo $\wp$,
\begin{eqnarray}\label{F2'T+f1}
T^{+}f_{1}&\equiv &\left[ g_{2,0}^{0}, \left( \dfrac{a_{2}^2-2r}{a_{2}^2\beta_{0}}\right)X^{r-1}Y-\dfrac{2rX^{r-2}Y^2}{a_{2}^2\beta_{0}}  \right] + \left[ g_{2,2}^{0}, \left( \dfrac{a_{2}^2-2r}{a_{2}^2\beta_{0}}\right)X^{r-1}Y+\dfrac{2rX^{r-2}Y^2}{a_{2}^2\beta_{0}}  \right] \nonumber\\
 && - \left[ g_{2,1}^{0},  \left( \frac{a_{2}^2-2r}{a_{2}^2\beta_{0}}\right)X^{r-1}Y-\frac{2rX^{r-2}Y^2}{a_{2}^2\beta_{0}}\right] - \left[ g_{2,3}^{0},  \left( \frac{a_{2}^2-2r}{a_{2}^2\beta_{0}}\right)X^{r-1}Y+\frac{2rX^{r-2}Y^2}{a_{2}^2\beta_{0}}  \right] \nonumber\\
 && \equiv \left[ g_{2,0}^{0},  c_{1}X^{r-1}Y-\dfrac{2rX^{r-2}Y^2}{a_{2}^2\beta_{0}}  \right] + \left[ g_{2,2}^{0},  c_{1}X^{r-1}Y+\dfrac{2rX^{r-2}Y^2}{a_{2}^2\beta_{0}}  \right] \nonumber\\
 && - \left[ g_{2,1}^{0},   c_{1}X^{r-1}Y-\frac{2rX^{r-2}Y^2}{a_{2}^2\beta_{0}}\right] - \left[ g_{2,3}^{0}, c_{1}X^{r-1}Y+\frac{2rX^{r-2}Y^2}{a_{2}^2\beta_{0}}  \right].
 \end{eqnarray}
We note that in the second branch of $T^{+}$, the coefficients of the $X^{r}$ terms add up to zero.

For $n\geq 2$, we have,
\begin{eqnarray}\label{F2'a2fn}
-a_{2}f_{n}& = &\left[g_{n,0}^{0}, \dfrac{-a_{2}^{n-2}(Y^{r}-X^{r-2}Y^{2} )}{\beta_{0}}\right] +  \left[g_{n,2}^{0}, \dfrac{a_{2}^{n-2}(Y^{r}-X^{r-2}Y^{2})}{\beta_{0}}\right] \nonumber\\
&&+ \left[g_{n,1}^{0},\dfrac{a_{2}^{n-2}(Y^{r}-X^{r-2}Y^{2})}{\beta_{0}}\right] + \left[g_{n,3}^{0},\dfrac{-a_{2}^{n-2}(Y^{r}-X^{r-2}Y^{2})}{\beta_{0}}\right].
\end{eqnarray}
Using the definition of $T^-$ and that $r-2>t-1$ in this context, we get that for all $n\geq2$,
\begin{equation}\label{F2'T-fn+1}
T^{-}f_{n+1} \equiv \left[g_{n,0}^{0}, \dfrac{a_{2}^{n-2}Y^{r}}{\beta_{0}}\right] -\left[g_{n,2}^{0}, \dfrac{a_{2}^{n-2}Y^{r}}{\beta_{0}}\right]-\left[g_{n,1}^{0}, \dfrac{a_{2}^{n-2}Y^{r}}{\beta_{0}}\right]+\left[g_{n,3}^{0}, \dfrac{a_{2}^{n-2}Y^{r}}{\beta_{0}}\right] \mod\wp.
\end{equation}
 Thus using \eqref{F2'a2fn}, \eqref{F2'T-fn+1}, \eqref{F2'T+fn-1} and \eqref{F2'T+f1}, we get that for $n\geq 3$, \begin{eqnarray}\label{sl1F2'radn}
& &T^{+}f_{n-1}-a_{2}f_{n}+T^{-}f_{n+1}  \nonumber \\
 &\equiv&
\left[g_{n,0}^{0},\dfrac {a_{2}^{n-4}(a_{2}^2-2^2)}{\beta_{0}}X^{r-2}Y^2 \right]
-\left[g_{n,2}^{0},\dfrac {a_{2}^{n-4}(a_{2}^2-2^2)}{\beta_{0}}X^{r-2}Y^2\right]  \nonumber\\
 & &  -\left[g_{n,1}^{0},\dfrac {a_{2}^{n-4}(a_{2}^2-2^2)}{\beta_{0}}X^{r-2}Y^2\right]+\left[g_{n,3}^{0},\dfrac {a_{2}^{n-4}(a_{2}^2-2^2)}{\beta_{0}}X^{r-2}Y^2\right]  \nonumber\\
 & \equiv& 2a_{2}^{n-3}c_{2}\left(\left[g_{n,0}^{0}, X^{r-2}Y^2 \right]-\left[g_{n,2}^{0}, X^{r-2}Y^2\right] -\left[g_{n,1}^{0}, X^{r-2}Y^2\right]+\left[g_{n,3}^{0}, X^{r-2}Y^2\right] \right)  \nonumber\\
& \equiv& 0 \mod \wp 
\end{eqnarray}
and for $n=2$,
\begin{eqnarray*}
 T^{+}f_{1}-a_{2}f_{2}+T^{-}f_{3}\equiv c_{1}\left(\left[g_{2,0}^0, \theta X^{r-3}\right] + \left[g_{2,2}^0, \theta X^{r-3}\right] -\left[g_{2,1}^0, \theta X^{r-3} \right]-\left[g_{2,2}^0, \theta X^{r-3} \right]\right).\enspace
\end{eqnarray*}
Thus going$\mod(\wp, X_{r}+V_{r}^{(2)})$, we have
\begin{equation}\label{sl1F2'rad2}   
    T^{+}f_{1}-a_{2}f_{2}+T^{-}f_{3}
\equiv\begin{cases} 
\dfrac{2}{a_{2}}\left([g_{2,0}^0, \theta X^{r-3}] + [g_{2,2}^0, \theta X^{r-3}]-[g_{2,1}^0, \theta X^{r-3}]-[g_{2,2}^0,X^{r-3}\theta]\right)&\text{ if } \tau\leq t-1\\
0 &\text{ if } \tau>t-1
\end{cases}
\end{equation}
by Lemma \ref{sl1par} \eqref{sl1par1} together with the fact that $r$ is even.

\noindent\underline{\textbf{Radius $-n$ for $n\geq 1$}}\\
We have
\begin{equation}\label{F2'a2f-n}
-a_{2}f_{-n}=\left[g_{n-1,0}^{1}, \dfrac{a_{2}^{n}(X^{r}-X^2Y^{r-2})}{\beta_{0}}\right]
\end{equation}
Using Lemma \ref{T+toT-} equation \eqref{T+toT-c} we have
\begin{equation}\label{F2'T-f-n-1}
    T^{-}f_{-(n+1)} \equiv \left[g_{n-1,0}^{1}, -\dfrac{a_{2}^{n}X^{r}}{\beta_{0}}\right]\mod\wp.
\end{equation}
We now compute $T^{+}f_{-(n-1)}$ for $n\geq2$ and $T^{-}f_{0}$. We have modulo $\wp$,
\begin{eqnarray}\label{F2'T+f-n+1}
    T^{+}f_{-(n-1)}&\equiv& \left[g_{n-1,0}^{1}, \dfrac{-a_{2}^{n-2}(2^rX^{r}-2^2X^2Y^{r-2})}{a_{2}^n\beta_{0}}\right] \equiv\left[g_{n-1,0}^{1}, \dfrac{2^2a_{2}^{n-2}X^2Y^{r-2}}{\beta_{0}}\right]\label{F2'T+f-n+1a} \\
    T^{-}f_{0}&\equiv& \left[g_{0,0}^{1}, \dfrac{2^2X^2Y^{r-2}}{a_{2}\beta_{0}}\right] \label{F2'T-f0}
\end{eqnarray}
We note that the second branch of $T^{+}$ in \eqref{F2'T+f-n+1a} vanishes modulo $\wp$ applying Lemma \ref{sl1cmb2} \eqref{sl1cmb2b}. Using \eqref{F2'a2f-n}, \eqref{F2'T-f-n-1}, \eqref{F2'T+f-n+1} and applying Lemma \ref{sl1par}\eqref{sl1par2}, we get that for $n\geq 2$,
\begin{eqnarray*}
-a_{2}f_{-n}+T^{-}f_{-(n+1)}+T^{+}f_{-(n-1)} \equiv \left[g_{n-1,0}^{1}, -2a_{2}^{n-1}c_{2}X^2Y^{r-2} \right] 
\equiv 0 \mod\wp
\end{eqnarray*}
and for $n=1$ further using \eqref{F2'T-f0},\begin{eqnarray*} 
-a_{2}f_{-1}+T^{-}f_{-2}+T^{-}f_{0} \equiv \left[g_{0,0}^{1}, -2c_2X^2Y^{r-2}\right]
\equiv 0 \mod\wp
\end{eqnarray*}
Thus radii $-n, n\geq1$ vanishes irrespective of the relation between $\tau$ and $t$. 

 We recall that in diagram \eqref{diag4}, the $\bar{\F}_{2}[G]$-module $\sF'_{2}$ is a quotient of $\mathcal{I}(V_{1})$ where $V_{1}$ is isomorphic to the sub-module $\dfrac{\theta X_{r-3}}{\theta X_{r-3}^{(1)}}$ of $\dfrac{V_{r}^{(1)}}{V_{r}^{(2)}}$. From the above computations we get that $(T-a_{2})f$ is integral. Its reduction modulo $(\wp, X_{r}+V_{r}^{(2)})$ is supported on radius $0,1$ and $2$ and in fact only on radius $1$ when $\tau>t-1$. Further we observe from equations    \eqref{sl1F2'rad0}, \eqref{sl1F2'rad1} and \eqref{sl1F2'rad2} that this reduction lands in the submodule $\mathcal{I}(V_{1})$ of diagram \eqref{diag4}. In fact its image in $\mathcal{I}(V_{1})$ coincides with 
 \begin{enumerate}
\item $\frac{2}{a_{2}}\cdot \left(T^2- \overline{\left(\frac{a_{2}}{2}+ \frac{r-2}{2\alpha}\right)}T + 1\right)\left[1, X\right]$ \enspace   if   $\tau\leq t-1$ and
\item $\frac{2}{a_{2}} \cdot T\left[1,X\right]$ \enspace  if  $\tau>t-1.$
\end{enumerate}
 Since $\frac{2}{a_{2}}$ is a unit, the operator $T$ is $G$-equivariant and $\mathcal{I}(V_{1})$ is generated by the element $[1,X]$ over $G$, we get that $\sF'_{2}$ is a quotient of $\dfrac{\mathcal{I}(V_{1})}{(T^2-c_{0}T+1)}$ if $\tau \leq t-1$ and $\dfrac{\mathcal{I}(V_{1})}{(T)}$ if $\tau>t-1$.
\end{proof}
 We note that in Theorem \ref{oddthm}, we use diagram \eqref{diag3} to prove that $\bar{\Theta}$ is irreducible when $r$ is odd. However, later we found that one can also use the alternative diagram \eqref{diag4}  to treat the case of odd $r$ for slope 1. 
\begin{prop} \label{bartheta0odd}
If $r\geq5$ is odd then we have,
\begin{enumerate}
\item $\mathcal{F}'_{2}$ is a quotient of $\dfrac{\mathcal{I}(V_{1})}{(T)}.$ \label{theta0odd1}
\item $\mathcal{F}'_{3}=0.$ \label{theta0odd2}
\end{enumerate}
As a result we have $\overline\Theta_0=\mathcal{F'}_{2}\cong \pi(1,0,{1})$. 
\end{prop}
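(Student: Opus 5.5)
The plan is to analyse diagram \eqref{diag4} for odd $r\geq 5$ directly, reusing two ingredients already established: Lemma \ref{X_r^*}, which says $H(X,Y)\in X_r$ generates the trivial line $V_0$ in $V_r^{(1)}/V_r^{(2)}$ for $r$ odd, and the function $f=f_1+f_{-1}$ from the proof of Theorem \ref{oddthm}. We also use that $\mathcal{I}(X_r)\subseteq \ker P$ (Lemma \ref{RemarkBG09}), since $v(a_2)=1$.

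\textbf{Step 1: $\mathcal{F}'_3=0$.} By Lemma \ref{Vr1/Vr2}, the quotient $\mathcal{I}(V_0)$ in \eqref{diag4} is generated by the image of $[1,\theta X^{r-4}Y]$. For $r$ odd, a direct check gives
$$H(X,Y)\equiv \theta\left(X^{r-3}+X^{r-4}Y+Y^{r-3}\right)\mod V_r^{(2)}.$$
Both sides lie in $V_r^{(1)}$, are $\GL_2(\F_2)$-invariant, and are not in $V_r^{(2)}$. Comparing with Lemma \ref{Vr*} and the alternating-sum reductions $\theta X^{r-3-j}Y^j\equiv \theta X^{r-4}Y$ for $0<j<r-3$, the invariant line of $V_r^{(1)}/V_r^{(2)}$ is one-dimensional, so the two sides agree. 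Under the projection of Lemma \ref{Vr1/Vr2}, the terms $\theta X^{r-3}$ and $\theta Y^{r-3}$ go to $0$ and $\theta X^{r-4}Y$ goes to $1$. Hence $[1,H]$ maps to the generator $[1,1]$ of $\mathcal{I}(V_0)$. Since $[1,H]\in \mathcal{I}(X_r)\subseteq\ker P$, the map $\mathcal{I}(V_0)\to\mathcal{F}'_3$ is zero, so $\mathcal{F}'_3=0$ and $\bar\Theta_0=\mathcal{F}'_2$.

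\textbf{Step 2: $T[1,X]\mapsto 0$ in $\mathcal{F}'_2$.} Take $f_1=[g_{1,0}^0,H'/a_2]$ and $f_{-1}=[g_{0,0}^1,H''/a_2]$ as in Theorem \ref{oddthm}, with
$$(T-a_2)f\equiv -[g_{1,0}^0,H']-[g_{0,0}^1,H'']+\Big[1,\tfrac{2r}{a_2}(X^{r-1}Y+XY^{r-1})\Big]\mod\wp.$$
Now reinterpret this modulo $X_r+V_r^{(2)}$ inside the submodule $V_1=\langle\overline{\theta X^{r-3}},\overline{\theta Y^{r-3}}\rangle$ rather than via the projection of Lemma \ref{projection}.

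Since $H\in X_r$ and $r$ is odd, we have $H'\equiv X^{r-1}Y+X^{r-2}Y^2\equiv\theta X^{r-3}$ modulo $X_r+V_r^{(2)}$ and mod $2$. Symmetrically, $H''\equiv\theta Y^{r-3}$. The last term is
$$X^{r-1}Y+XY^{r-1}\equiv\theta(X^{r-3}+X^{r-4}Y+Y^{r-3})\equiv H,$$
which lies in $X_r$. Hence $(T-a_2)f$ reduces to $[g_{1,0}^0,\theta X^{r-3}]+[g_{0,0}^1,\theta Y^{r-3}]$, which is the image of $T[1,X]\in\mathcal{I}(V_1)$ under the injection of Lemma \ref{Vr1/Vr2}. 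As $T$ is $G$-linear, $T\,\mathcal{I}(V_1)$ lies in the kernel of $\mathcal{I}(V_1)\to\mathcal{F}'_2$. So $\mathcal{F}'_2$ is a quotient of $\mathcal{I}(V_1)/(T)=\pi(1,0,1)$.

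\textbf{Step 3: conclusion.} The module $\pi(1,0,1)$ is irreducible (supercuspidal), and $\bar\Theta=\bar\Theta_0=\mathcal{F}'_2$ by Proposition \ref{theta1} and Step 1. Since $\bar\Theta\neq 0$, we get $\bar\Theta_0=\mathcal{F}'_2\cong\pi(1,0,1)$.

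The main obstacle is the bookkeeping in Steps 1 and 2. One must verify that $H$ is congruent to exactly the invariant vector $\theta(X^{r-3}+X^{r-4}Y+Y^{r-3})$ modulo $V_r^{(2)}$. One must also check that the congruences of Lemma \ref{hprime} for $H'$ and $H''$ hold modulo $\wp$ with the correct normalisation by $a_2$, so that no stray unit multiples of $X^r$ or $Y^r$ survive outside $X_r$. I would do the first check by comparing coefficient sums of even and odd indices via Lemma \ref{Vr*}.
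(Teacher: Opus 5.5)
Your proposal is correct and is essentially the paper's proof: Step 1 matches the paper's argument via $[1,H]\in\mathcal{I}(X_r)\subseteq\ker P$, and in Step 2 you reuse the test function of Theorem \ref{oddthm}, whereas the paper puts $H'/a_2$ on both $g_{1,0}^0$ and $g_{1,1}^0$ and so lands exactly on $T[1,X]$. One small slip: your reduction $[g_{1,0}^0,X]+[g_{0,0}^1,Y]$ is $T[1,X+Y]$, not $T[1,X]=[g_{1,0}^0,X]+[g_{1,1}^0,X]$; this changes nothing, because $X+Y$ generates the irreducible $V_1$ over $K$, so $T[1,X+Y]$ still generates $T\,\mathcal{I}(V_1)$ as a $G$-module.
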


\begin{proof}
For part \eqref{theta0odd1}, consider the function
\begin{equation}\label{frodd}
    f = \left[g_{1,0}^0,\frac{H'(X,Y)}{a_2}\right]+\left[g_{1,1}^0,\frac{H'(X,Y)}{a_2}\right].
    \end{equation}
We note using Lemma \ref{X_r^*} that $H'(X,Y) \equiv \theta X^{r-3} \mod (\wp,X_{r})$. By Lemma \ref{H'}  we see that 
\begin{eqnarray*}
    (T-a_{2})f &\equiv & \left[g_{1,0}^{0}, -H'(X,Y)\right] +\left[ g_{1,1}^{0}, -H'(X,Y) \right]+ \left[1, \frac{2r}{a_{2}}(XY^{r-1} +X(X+Y)^{r-1})\right] \\
    &\equiv & \left[g_{1,0}^{0}, \theta X^{r-3}\right] +\left[ g_{1,1}^{0},  \theta X^{r-3}\right]+ \left[1, \frac{2r}{a_{2}}\displaystyle\sum_{j=2}^{r-2}{r-1 \choose j }X^{r-j}Y^j  \right] \mod (\wp, X_{r}). 
\end{eqnarray*}
We check using Lemma \ref{Vr*} that  $\displaystyle\sum_{j=2}^{r-2}{r-1 \choose j }X^{r-j}Y^j $ lies in $V_{r}^{(2)}$. 
    Thus we have $(T-a_{2})f $ is integral and modulo $(\wp, V_{r}^{(2)}+ X_{r})$ lands in the submodule $\mathcal{I}(V_{1})$  of diagram \eqref{diag4} and its image coincides with $T[1,X]$ in $\mathcal{I}(V_{1})$.

    Now for part $\eqref{theta0odd2}$, 
      we note that $\displaystyle \sum_{j=2}^{r-2}{r \choose j}X^{r-j}Y^j \in V_{r}^{(2)}$, which can be checked by applying Lemma \ref{Vr*}(2) and using that $r$ is odd. Hence \begin{eqnarray*}
          H(X,Y)= r(X^{r-1}Y+XY^{r-1})+ \sum_{j=2}^{r-2}{r \choose j}X^{r-j}Y \equiv X^{r-1}Y+XY^{r-1} \mod V_{r}^{(2)}\\
           \equiv \theta\left(X^{r-3} + (r-4)X^{r-1}Y + Y^{r-3}\right)  \mod V_{r}^{(2)}.
           \end{eqnarray*}
      The last congruence is obtained by alternate addition and subtraction of terms. 
      It follows from the computation above that $[1,H(X,Y)]$ maps to $[1,1]$ in $\mathcal{I}(V_{0})$ by the map induced from projection map in \eqref{Vr1/Vr2b}, 
      thereby generating it as a $\bar{\F}_{2}[G]$-module.   On the other hand, by Lemma \ref{X_r^*} we have  $H(X,Y)\in X_{r}$ and thus $[1,H(X,Y)]$ maps to zero in $\bar{\Theta}$ under $P$. Thus $\mathcal{I}(V_{0})$ maps to zero in $\bar{\Theta}$ under $P$.  Therefore we have $\mathcal{F}'_{3}=0$ in \eqref{diag4}.
   
    Finally, we conclude from part \eqref{theta0odd1}, \eqref{theta0odd2} and Proposition \ref{theta1} that $\bar{\Theta}_{0}=\mathcal{F}'_{2} \cong \pi(1,0,1).$
    \end{proof}

\begin{remark}
    If $r$ is odd, then $t=v(r-2)=0$ and hence $t-1<0\leq\tau$. Indeed the function in \eqref{F2'eqeven} simplifies a lot for odd $r$, in the sense that most terms vanish mod $\wp$ or are at least integral. A variation of the non-integral part of the said function leads us to the one in \eqref{frodd}. Note that when $r$ is odd, we arrive at the same conclusion as in Proposition \ref{F2'} in the case  $\tau>t-1$. \end{remark}
We have not considered the cases $r=2,3$ and $4$ so far. We shall treat them separately in the following theorem. 
\begin{theorem} The description of $\bar{\Theta}_{k,a_{2}}$ for $k=4,5$ and $6$ are as follows.\label{k=4,5,6}
\begin{enumerate}
    \item $\bar{\Theta}^{ss}_{4,a_{2}} \cong \pi(0,\lambda,1)\oplus \pi(0,\lambda^{-1},1)$ where $\lambda^2-\frac{a_{2}}{2}\lambda+1=0$.\label{Theta4a2}
    \item $\bar{\Theta}^{ss}_{5,a_{2}} \cong \pi(0,0,1). $  \label{Theta5a2}
    
    \item $\bar{\Theta}^{ss}_{6,a_{2}} \cong\pi(0,\lambda,1)\oplus \pi(0,\lambda^{-1},1)$ where $\lambda^2-\frac{a_{2}^2-2^2}{2a_{2}}\lambda+1=0.$    \label{Theta6a2}
\end{enumerate}
\end{theorem}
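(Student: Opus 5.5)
For each $r\in\{2,3,4\}$ (i.e.\ $k=4,5,6$) I will follow the same template as in the general case. Write down the filtration pieces of $V_r/V_r^{(2)}$ explicitly, since they are tiny. Use Lemma \ref{RemarkBG09} with $v(a_2)=1$ to kill $\mathcal{I}(X_r)$ and $\mathcal{I}(V_r^{(2)})$. Then exhibit one explicit function $f\in\mathcal{I}(\Sym^r\bar\Q_2^2)$, supported on a few radii, such that $(T-a_2)f$ is integral and its reduction generates the relevant Hecke relation.

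For $r=2$ we have $V_2^{(1)}=0$, so $V_2/V_2^{(2)}=V_2$, and by \eqref{ses2} this is an extension of $V_0$ by $V_1$. Moreover $X_2=\langle X^2\rangle_K$ contains $X^2,Y^2,(X+Y)^2=X^2+Y^2$ mod $2$, which is exactly the image of $V_1$. Hence $P$ factors through $\mathcal{I}(V_0)$, generated by $[1,XY]$. I will take $f=[1,XY/a_2]$ plus correction terms on $g^0_{1,\lambda}$ and $g^1_{0,0}$ of the shape $[g,\,X^2/2]$. Using $T[1,XY]=[g^0_{1,0},2XY]+[g^0_{1,1},X(2Y-X)]+[g^1_{0,0},2XY]$, I expect to get $(T-a_2)(\cdot)\equiv (T^2-\tfrac{a_2}{2}T+1)[1,XY]$ modulo $(\wp,X_2)$. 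This forces $\bar\Theta^{ss}_{4,a_2}$ to be a quotient of $\mathcal{I}(V_0)/(T^2-cT+1)$ with $c=\overline{a_2/2}$. This agrees with the formula of Theorem \ref{ThmB}, since for $r=2$ we have $\alpha=(a_2^2-4)/2a_2$ and $r-2=0$.

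For $r=3$, $V_3^{(1)}=\langle\theta\rangle\cong V_0$ and $V_3^{(2)}=0$. By Lemma \ref{X_r^*}, $H=3(X^2Y+XY^2)\in X_3$ spans this $V_0$, so $\bar\Theta_0=0$. The piece $\mathcal{F}_0$, coming from $V_1=\langle X^3,Y^3\rangle\subset X_3$, also vanishes. Thus $\bar\Theta$ is a quotient of $\mathcal{I}(V_0)$ generated by $[1,X^2Y]$. I will choose $f$ in the spirit of Proposition \ref{theta1} and the proof of Theorem \ref{oddthm}, say $f=[g^0_{1,0},X^2Y/a_2]+[g^0_{1,1},X^2Y/a_2]+[g^1_{0,0},XY^2/a_2]$ adjusted by multiples of $X^3/a_2$. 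The goal is to show that $(T-a_2)f$ reduces to $T[1,1]$, which gives $\pi(0,0,1)$.

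For $r=4$, $V_4^{(1)}=\theta V_1$ and $V_4^{(2)}=0$. Here Proposition \ref{theta1} fails, so both $\bar\Theta_1$ (a quotient of $\mathcal{I}(V_0)$, since $V_1\subset X_4$ modulo $V_4^{(1)}$) and $\bar\Theta_0=\mathcal{I}(\theta V_1)$-quotient can contribute. I will imitate Propositions \ref{F3'} and \ref{F2'}, truncating the functions (now $K=0$ and $t=v(2)=1$), to get a relation $(T^2-cT+1)$ with $c=\overline{(a_2^2-4)/2a_2}$ on whichever factor survives. I will also show that the other factor is killed, e.g.\ by producing a function whose image generates it, as in Proposition \ref{theta1}. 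Since $\pi(1,\lambda,1)^{ss}\cong\pi(0,\lambda,1)^{ss}$, the final answer is as stated.

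The main obstacle is $r=4$. I need to decide which of $\mathcal{I}(V_0)$ and $\mathcal{I}(V_1)$ actually contributes, and check that the one-sided kernel computation does not leave two copies. I would first try to show directly that $[1,X^3Y]$ lies in $\ker P$ modulo $\theta$-terms, using $f=[1,(X^3Y-XY^3)/a_2]$ plus corrections; if that fails, I would fall back to a dimension and central-character argument, comparing with the known semisimplified length of the reduction.
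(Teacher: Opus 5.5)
Your overall scaffold matches the paper: kill $\mathcal{I}(X_r)$, identify which of $\mathcal{I}(V_0)$ and $\mathcal{I}(V_1)$ can contribute, write down explicit test functions, and finish with the mod-$2$ LLC. Your description of $V_r/V_r^{(2)}$ for $r=2,3,4$ is also correct. There are, however, genuine gaps.

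\textbf{The case $r=4$.} You plan to put the quadratic relation on one factor and show that the other is killed. This cannot work for all $a_2$, because both factors genuinely contribute in general. Table 2, row 1 ($a_2=2\sqrt3$) has contributions from both $\mathcal{I}(V_1)/(T^2+1)$ and $\mathcal{I}(V_0)/(T-1)$. The structural reason is that for $r=4$ one has $c=\overline{a_2/2}+\overline{2/a_2}$. So $\lambda=\overline{2/a_2}$ is always a root of $T^2-cT+1$, and nothing forbids $\mathcal{F}_1\neq 0$.

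Your trial function $[1,(X^3Y-XY^3)/a_2]$ confirms this. It only produces $\overline{2/a_2}\bigl([g^0_{1,0},1]+[g^1_{0,0},1]\bigr)$ in $\mathcal{I}(V_0)$. That element lies in the kernel of the $G$-invariant functional ``sum of all values'', so it does not generate $\mathcal{I}(V_0)$.

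What is needed is an upper bound on \emph{both} pieces. First, $\mathcal{F}_0=0$. Second, $\bar\Theta_0$ is a quotient of $\mathcal{I}(V_1)/(T^2-cT+1)$: take the function \eqref{F2'eqeven} with $r=4$, where $K=0$, $t=1$ and $\beta_0=\alpha$ is a unit. Third, $\mathcal{F}_1$ is a quotient of $\pi(0,\overline{2/a_2},1)$: take $f-f_0$ with $f$ as in \eqref{F3'eqeven}. Then every constituent of $\bar\Theta$ is a constituent of a principal series with parameter $\lambda^{\pm1}$. Hence $\bar\Theta^{ss}$ is not supercuspidal, and the shape of the image of the mod-$2$ LLC (with trivial central character) forces $\pi(0,\lambda,1)^{ss}\oplus\pi(0,\lambda^{-1},1)^{ss}$.

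Your fallback ``dimension and central-character argument'' is this last step. It only closes once you also have the bound on $\mathcal{F}_1$. Otherwise nothing rules out $\bar\Theta_0=0$ with $\bar\Theta=\mathcal{F}_1$ an arbitrary quotient of $\mathcal{I}(V_0)$.

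\textbf{The test functions for $r=2$ and $r=3$.} These also fail as stated.

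For $r=2$, $T^+[1,XY/a_2]$ contains $-X^2/a_2$ at $g^0_{1,1}$. A correction $[g^0_{1,1},cX^2/2]$ with $c$ integral contributes only the integral term $-ca_2X^2/2$ there, so it cannot cancel this. For $c$ a unit, its own $T^+$ is non-integral, because $X^2$ is fixed by both right-hand branches $(X,Y)\mapsto(X,2Y)$ and $(X,Y)\mapsto(X,2Y-X)$. Note also that in characteristic $2$ the element $(T^2-cT+1)[1,1]$ has no radius-$0$ component, since $T^2[1,1]$ contributes $3[1,1]$. So there is no reason to put anything at the centre. The paper's function $[g^0_{1,0},\frac{Y^2-XY}{a_2}]+[g^0_{1,1},\frac{Y^2-XY}{a_2}]+[g^1_{0,0},\frac{X^2-XY}{a_2}]$ lives on the three neighbours. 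Its image in $\mathcal{I}(V_0)$ is $(T^2-\overline{a_2/2}\,T+1)[1,\overline{2/a_2}]$.

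For $r=3$, $T^+[g^0_{1,0},X^2Y/a_2]$ has $-X^3/a_2$ at $g^0_{2,2}$ and the unit multiple $\frac{2}{a_2}X^2Y$ at $g^0_{2,0}$. Adding $cX^3/a_2$ at $g^0_{1,0}$ changes both children by the same amount, so integrality is not restored. Moreover, the surviving radius-$2$ terms are nonzero modulo $(\wp,X_3)$, so the output would not be $T[1,1]$ anyway. The right choice uses $H=3(X^2Y+XY^2)\in X_3$ itself. For $[1,H/2]$, the term $-a_2[1,H/2]$ dies modulo $X_3$. Meanwhile $T[1,H/2]\equiv[g^0_{1,0},X^2Y]+[g^0_{1,1},X^2Y]+[g^1_{0,0},XY^2]$, which maps to $T[1,1]$ and gives $\pi(0,0,1)$.
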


\begin{proof}
For part \eqref{Theta4a2}, we note that $\bar{\Theta}_{1} = \bar{\Theta}_{4,a_{2}}$  when $r=2$ as $V_{2}^{(1)} = V_{2}^{(2)}=0.$ Further in \eqref{diag2}, we have $\mathcal{I}(V_{1})=\mathcal{I}(X_{2})\subset \ker(P)$. Thus $\bar{\Theta}_{1}=\sF_{1}$. Considering function $f$ described as  $$f=f_{1}+f_{-1} = \left[g_{1,0}^0, \dfrac{Y^{2}-XY}{a_{2}}\right]+\left[g_{1,1}^0, \dfrac{Y^{2}-XY}{a_{2}}\right] +  \left[g_{0,0}^{1}, \dfrac{X^{2}-XY}{a_{2}}\right],$$
we get that $(T-a_{2})f$ is integral. Further its reduction mod $\wp$ maps to $\left(T^{2}-\frac{a_{2}}{2}T+1\right)\left[1, \frac{2}{a_{2}}\right]$ in $\mathcal{I}(V_{0})$ via \eqref{VrtoV0}, thereby generating the image of 
$\left(T^{2}-\frac{a_{2}}{2}T+1\right)$ as an $\bar\F_{2}[G]$-module. Therefore we get that $\sF_{1}$ is a quotient of $\dfrac{\mathcal{I}(V_{0})}{\left(T^2-\frac{a_{2}}{2}T+1\right)}$. By analysing the image of mod-$2$ LLC, we get that $\bar{\Theta}_{4,a_{2}}\cong \sF_{1} \cong \pi(0,\lambda,1)\oplus \pi(0,\lambda^{-1},1)$ where $\lambda^2-\frac{a_{2}}{2}\lambda+1=0$. 

We shall now prove part \eqref{Theta5a2}. We have $V_{3}^{(1)} \cong V_{0}$ and $V_{3}^{(2)}=0.$   Thus $\bar{\F}_{2}[G]$-module $\bar{\Theta}_{0}$ is the image of $\mathcal{I}(V_{0})$ under $P$. We now have three factors to consider; $\bar{\Theta}_{0}$, $\sF_{0}$ and $ \sF_{1}$. Using Lemma \ref{X_r^*} we get that $\bar{\Theta}_{0}=0$ since $[1,H(X,Y)]$ generates $\mathcal{I}(V_{3}^{(1)})\cong \mathcal{I}(V_{0})$ in $\mathcal{I}(V_{r})$ and also belongs to $\mathcal{I}(X_{r})\subset \ker(P)$. Further we have $\sF_{0}=0$ in $\eqref{diag2}$ since $\mathcal{I}(V_{1})$ is isomorphic to  $\mathcal{I}\left(\frac{X_{3}}{X_{3}^{(1)}}\right)$ as an $\bar{\F}_{2}[G]$-module and  $\mathcal{I}(X_{3})\subset \ker(P)$. Thus, we have $\bar{\Theta} \cong \sF_{1}$. Now using Lemma \ref{Heven} it can be seen that $(T-a_{2})\left[1,\dfrac{H(X,Y)}{2}\right]$ is integral and that its reduction maps to $T[1,1]\in \mathcal{I}(V_{0})$  under \eqref{VrtoV0}. Thus we get that $\bar{\Theta}_{5,a_{2}} \cong \dfrac{\mathcal{I}(V_{0})}{(T)}.$

For part \eqref{Theta6a2}, we observe that  $V_{4}^{(1)} \cong V_{1}$ and $V_{r}^{(2)}=0$. Let us now consider the function $f$ as in \eqref{F2'eqeven}. When $r=4$, $\beta_{0}=\alpha$ is a unit, $(T-a_{2})f$ is integral and its reduction$\mod (\wp,X_{4})$ lies in $\mathcal{I}(V_{r}^{(1)})$. Further its image coincides with $\frac{2}{a_{2}}.\left(T^2-\left(\frac{a_{2}^2-2^2}{2a_{2}}\right)T+1 \right)[1,X]$ in $  \mathcal{I}(V_{1})\cong \mathcal{I}(V_{r}^{(1)})$. Thus, $\bar{\Theta}_{0}$ is a quotient of $\dfrac{\mathcal{I}(V_{1})}{\left(T^2-\left(\frac{a_{2}^2-2^2}{2a_{2}}\right)T+1 \right)}$. Since we have $\sF_{0}=0$ by the same reasoning as given in the proof of part \eqref{Theta5a2}, it remains to analyze the factor $\sF_{1}$. To this end, we consider the function $g=f-f_{0}$ where $f$ and $f_{0}$ is as given in \eqref{F3'eqeven}. We can see that $(T-a_{2})g$ is integral and its reduction mod $(\wp,V_{r}^{(1)}+X_{r})$ maps to $\left(T-\frac{2}{a_{2}}\right)[1,1]$ in $\mathcal{I}(V_{0}).$ Thus $\sF_{1}$ is a quotient of $\pi(0,\frac{2}{a_{2}},1).$ Our inferences about $\bar{\Theta}_{0}$ and $\sF_{1}$ together with an analysis of the image of mod-$2$ LLC enables us to conclude that $\bar{\Theta}_{6,a_{2}} \cong\pi(0,\lambda,1)\oplus \pi(0,\lambda^{-1},1)$ where $\lambda^2- \left(\frac{a_{2}^2-2^2}{2a_{2}}\right)\lambda+1=0.$
\end{proof}
For $r=2,3$ and $4$, we note that $\tau \leq t-1$, $\tau>t-1$ and $\tau=t-1$ respectively. Thus Theorem \ref{k=4,5,6} implies the main Theorem \ref{ThmB} for these small values of $r$. Next we use Propositions \ref{F3'}, \ref{F2'} and \ref{bartheta0odd} to arrive at the following theorem which holds true for arbitrary $r>4$.
\begin{theorem}\label{mtslope1p=2}
    Let $r\geq 5$ and $v(a_{2})=1$, then the description of $\bar{V}_{k,a_{2}}$ is as follows:
    \begin{enumerate}
        \item If $\tau\leq t-1$, $\bar{V}_{k,a_{2}} \cong \mu_{\lambda}\oplus \mu_{\lambda^{-1}}$ where $\lambda \in \bar{\F}_{2}^{\times}$ is such that $\lambda^2-c_{0}\lambda + 1$, where $c_{0}= \overline{\frac{a_{2}}{2} + \frac{r-2}{2\alpha}}$. 
        \item If $\tau>t-1, \,\bar{V}_{k,a_{2}} \cong \ind(\omega_{2})$.
    \end{enumerate}
\end{theorem}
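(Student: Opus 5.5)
The proof assembles the earlier propositions via the exact sequences \eqref{diag1} and \eqref{diag4}, and then applies mod-$2$ LLC. By Proposition \ref{theta1}, for $r\geq 5$ we have $\bar\Theta_1=0$, so $\bar\Theta\cong\bar\Theta_0$. By \eqref{diag4}, $\bar\Theta_0$ is an extension of $\sF'_3$ by $\sF'_2$, hence $\bar\Theta^{ss}=\sF_2'^{ss}\oplus\sF_3'^{ss}$. We split into $r$ odd and $r$ even.

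\textbf{Odd $r$.} Here $t=0$, so $\tau>t-1$ automatically (Remark \ref{remparityslope1}). Proposition \ref{bartheta0odd} gives $\bar\Theta\cong\pi(1,0,1)$, which is supercuspidal. Using $\pi(1,0,1)^{ss}\cong\pi(0,0,1)$ and mod-$2$ LLC, we get $\bar V_{k,a_2}\cong\ind(\omega_2)$. This agrees with Theorem \ref{oddthm}.

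\textbf{Even $r\geq 6$, $\tau>t-1$.} Proposition \ref{F2'} shows that $\sF'_2$ is a quotient of $\pi(1,0,1)$, and Proposition \ref{F3'} shows that $\sF'_3$ is a quotient of $\pi(0,a_2/2,1)$. The point is that $\bar\Theta^{ss}$ must be the image under mod-$2$ LLC of a two-dimensional Galois representation, i.e. of the form $\pi(0,0,\eta)$ or $\pi(0,\lambda,\eta)^{ss}\oplus\pi(0,\lambda^{-1},\eta)^{ss}$. Since $\bar\Theta\neq 0$, this must be exploited. If $\sF'_2\neq0$, it equals the irreducible supercuspidal $\pi(1,0,1)$. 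No admissible LLC image contains a supercuspidal together with a further constituent, so $\sF'_3=0$ and $\bar V_{k,a_2}\cong\ind(\omega_2)$. The main obstacle is to exclude $\sF'_2=0$. In that case $\bar\Theta^{ss}$ would be a quotient of the single principal series $\pi(0,a_2/2,1)$. When $\overline{a_2/2}\neq\pm1$ this is irreducible, and one irreducible principal series is never an LLC image, since images of reducible Galois representations always come in the pair $\lambda,\lambda^{-1}$ (and have length two when $\lambda\neq\pm1$). When $\overline{a_2/2}=1$ (note $\pm1$ coincide in characteristic $2$), $\pi(0,1,1)$ has length two, with composition factors a character and a Steinberg twist. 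The LLC image for $\lambda=1$ has these factors with multiplicity two, so it again cannot be a quotient of $\pi(0,1,1)$. Hence $\sF'_2\neq0$.

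\textbf{Even $r\geq 6$, $\tau\le t-1$.} Proposition \ref{F2'} shows that $\sF'_2$ is a quotient of $\mathcal I(V_1)/(T^2-c_0T+1)$, whose semisimplification is $\pi(0,\lambda,1)^{ss}\oplus\pi(0,\lambda^{-1},1)^{ss}$ with $\lambda^2-c_0\lambda+1=0$. We also have the contribution $\sF'_3$, a quotient of $\pi(0,a_2/2,1)$. Comparing with the list of LLC images, $\bar\Theta^{ss}$ is a sum of principal series. If $\sF'_2\neq 0$, the total must be exactly the pair $\lambda^{\pm1}$ attached to one Galois representation. If $\sF'_3$ also contributed, that would force $\overline{a_2/2}\in\{\lambda,\lambda^{-1}\}$ and a collapse, and one still lands on the parameter $\lambda^{\pm1}$. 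The case $\sF'_2=0$ is excluded as above, because a lone principal series is not an image. Either way $\bar V_{k,a_2}\cong\mu_\lambda\oplus\mu_{\lambda^{-1}}$. The delicate point is the possible simultaneous contribution of $\sF'_3$, mentioned in the introduction. Here I would use that if $\overline{a_2/2}$ is a root, then $c_0=\lambda+\lambda^{-1}$ forces consistency, so the answer is still determined by $c_0$, matching Theorem \ref{ThmB}.
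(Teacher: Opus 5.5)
Your proposal is correct and follows the paper's proof. Both reduce to $\bar\Theta\cong\bar\Theta_0$ via Proposition \ref{theta1}, bound $\sF'_2$ and $\sF'_3$ by Propositions \ref{F2'}, \ref{F3'} and \ref{bartheta0odd}, and then use the list of mod-$2$ LLC images to pin down $\bar\Theta^{ss}$. You are more explicit than the paper in ruling out $\sF'_2=0$ (including the case $\overline{a_2/2}=1$) and in handling the possible joint contribution of $\sF'_3$ when $\overline{a_2/2}\in\{\lambda,\lambda^{-1}\}$; the paper additionally observes that this joint contribution can only occur when $\tau=t-1$.
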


\begin{proof}
By Proposition \ref{theta1}, we know irrespective of the parity of $r$ that $\bar{\Theta}^{ss}  \cong \bar{\Theta}_{0}$. So we will analyse the factors $\mathcal{F}'_{2}$ and $\mathcal{F}'_{3}$ of $\bar{\Theta}_{0}$ from diagram \eqref{diag4}.

We start the analysis with the case $\tau \leq t-1$. We note that this case occurs only when $r$ is even. In this case, $\mathcal{F}_{3}^{'}$ is a quotient of $\dfrac{\mathcal{I}(V_{0})}{(T-\frac{a_{2}}{2})}$ by Proposition \ref{F3'} and $\mathcal{F}_{2}^{'}$ a quotient of $\dfrac{\mathcal{I}(V_{1})}{(T^{2}-c_{0}T+1)}$ where $c_{0} = \overline{\frac{a_{2}}{2} + \frac{r-2}{2\alpha}}$ by Proposition \ref{F2'}. Then by analysing the image of mod-$2$ LLC, we get that that $\bar{\Theta}$ is reducible and that there are two possibilities and both occurs with varying conditions:
\begin{enumerate}
    \item $\mathcal{F}'_{2} \cong \dfrac{\mathcal{I}(V_{1})}{(T^{2}-c_{0}T+1)}$ and $\mathcal{F}'_{3}=0$.
    \item $\mathcal{F}'_{2}$ is a non-zero quotient of $  \dfrac{\mathcal{I}(V_{1})}{(T-\frac{2}{a_{2}})}$ and 
    $\mathcal{F}'_{3}$ is a non-zero quotient of $\dfrac{\mathcal{I}(V_{0})}{(T-\frac{a_{2}}{2})}$.     
    
    \end{enumerate}

We shall now check the condition for which $\mathcal{F}_{3}'\neq 0$ case can happen. To this end, we have to find the condition for which $\frac{2}{a_{2}}$ satisfies $T^{2}-c_{0}T+1 = 0$ where $c_{0} = \overline{\frac{a_{2}}{2}}-\overline{\frac{2-r}{2\alpha}}$. This happens only when $\overline{\frac{2}{a_{2}}} = \overline{\frac{2-r}{2\alpha}}.$
Using this condition, we can see that the second possibility can occur only when $\tau=t-1$.

 Now let us consider the case when $\tau>t-1$. If $r$ is odd, it is clear from Proposition \ref{oddthm} or alternatively by Proposition \ref{bartheta0odd} that $\bar{\Theta} \cong \pi(1,0,1)$. 
 When $r$ is even,  Proposition \ref{F3'} and Proposition \ref{F2'} respectively gives us that $\mathcal{F}'_{3}$ is a quotient of $\dfrac{\mathcal{I}(V_{0})}{(T-\frac{a_{2}} {2})}$ and $\mathcal{F}'_{2}$ is a quotient of $\dfrac{\mathcal{I}(V_{1})}{(T)}$. On analysing the image of mod-$2$ LLC, we get that only possibility is that $\mathcal{F}'_{3}=0$ and $ \bar{\Theta} \cong \mathcal{F}'_{2} \cong \pi(1,0,1)$.  
\end{proof}
Thus we have arrived at a complete answer to the reduction problem at slope $1$ as claimed in the introduction (cf. Theorem \ref{ThmB}). Now Corollary \ref{cor2} follows as a direct application, using Remark \ref{remparityslope1}.
\section{Numerical examples}
 In this section, we compute some numerical examples using the algorithm introduced in \cite{[Roz17]} and show that the outputs are consistent with our theorems.\\
   
\underline{\bf{Slopes in $(0,1)$}} \\ 
     We recall from \S \ref{slope<1} the following notations:
     $r=k-2 \geq 2, t=v(r-1),0<v(a_{2})<1$ and $\tau'= v\left(\dfrac{a_{2}^2-2r^2}{2a_{2}}\right)$. To compare with the zig-zag conjecture for odd primes, we recall that $\tau= v\left(\dfrac{a_{2}^2-2r}{2a_{2}}\right)$ \cite{[G22]}.
     
     {\underline{Table 1}}\\
     \begin{tabular}{|c|c|c|c|c|c|c|c|}
 		\hline
 		S.No & r & $a_{2}$ & $\tau'$ & $\tau $& $t$ & Contributors to $\bar{\Theta}$& Reducibility of \\
        & & & & & &  \text{ by SAGE output }& $\bar{V}_{k,a_{p}} \text{ (assuming LLC)}$\\
        \hline
 		 $1$& $2$  &$\sqrt{2}$ & $-0.5$ & $-0.5$ & $0$  & $\mathcal{I}(V_{0})/(T)$& Irreducible \\ 		\hline
         $2$& $3$ & $\sqrt{2}$ & $2.5$ & $0.5$ & $1$ & $\mathcal{I}(V_{0})/(T^{2}+1)$ & Reducible\\
        \hline 
         $3$& $4$ & $2^{\frac{1}{3}}$ & $-\frac{2}{3}$ & $-\frac{2}{3}$ & $0$ & $\mathcal{I}(V_{0})/(T)$   & Irreducible\\
        \hline         
 		 $4$&$5$& $\sqrt{6}$&$0.5$ &$0.5$ & $2$ & $\mathcal{I}(V_{0})/(T)$   & Irreducible \\
 		\hline
 		 $5$& $5$ & $13\sqrt{2}$  & $3.5$& $1.5$ &$2$ & $\mathcal{I}(V_{0})/(T^2+1)$ & Reducible   \\
 		\hline
        $6$& $6$  &$\sqrt{6}$ & $-0.5$ & $-0.5$ & $0$ &$\mathcal{I}(V_{0})/(T)$  & Irreducible  \\
 		\hline
        $7$& $7$  &$6+5\sqrt{2}$ & $0.5$ & $1$ & $1$ & --   & Irreducible  \\
 		\hline 
        $8$& $7$  &$4+\sqrt{30}$ & $0.5$ & $2$ & $1$ & --   & Irreducible  \\
 		\hline 
        
        $9$& $9$ & $2^{\frac{1}{7}}$ & $-\frac{6}{7}$ & $-\frac{6}{7}$ & $3$ & --   & Irreducible\\
        \hline
        $10$& $9$ & $2^{\frac{3}{4}} $ & $-0.75 $ & $-0.75 $ & $3$  &-- & Irreducible  \\
        \hline
         $11$& $9$ & $8\pm \sqrt{226} $ & $3 $ & $2.5$ & $3$ &   $\mathcal{I}(V_{0})/(T^2-T+1)$   & Reducible \\
        \hline  
   \end{tabular}
   \vspace{10mm}
   
\noindent 
Note that the quantities $\tau$ and $\tau'$ are not only different, but their relation with $t$ may also vary. For example, in the fifth row, we can see that $\tau<t$ but $\tau'>t$. 
We also note that the data above are consistent with the following consequences of our theorem.\\
$\bullet$  If $r$ is even, then $\bar{V}_{k,a_{2}}$ must be irreducible.\\
$\bullet$ If $\bar{V}_{k,a_{2}}$ is reducible, then slope must be $\frac{1}{2}.$\\

    \underline{\bf{Slope = $1$}}\\
    We recall from \S \ref{slope=1} the following notations: $r=k-2 \geq 2, t=v(r-2),v(a_{2})=1$ and $\tau= v\left(\dfrac{a_{2}^2-2^2{r \choose 2}}{2a_{2}}\right)$. 
    
    \underline{Table 2}\\
   	\begin{tabular}{|c|c|c|c|c|c|c|}
 		\hline
 		S.No & r & $a_{2}$& $\tau$ &$t-1$& Contributors to $\bar{\Theta}$ & Reducibility of  \\
        & & & & &  by SAGE output &$\bar{V}_{k,a_{p}} \text{ (assuming LLC)}$\\
        \hline
        1 & $4$ & $2\sqrt{3}$ &$0$ &$0$& $\mathcal{I}(V_1)/(T^2+1)$ \text{ and } $\mathcal{I}(V_0 )/(T-1)$ & Reducible\\	 	
        \hline
        2 & $5$ & $6$ &$0$ &$-1$& $\mathcal{I}(V_1)/(T) $&Irreducible\\	
 		\hline 
        3 & $6$ & $2\sqrt{3}$ & $2$& $1$& $\mathcal{I}(V_1)/(T) $& Irreducible\\	
 		\hline 
        4 & $6$ & $2\zeta_{3}$ &$0$  &$1$ & ${\mathcal{I}(V_1)}/{(T^2-\zeta_3 T+1) }$&Reducible \\
 		\hline
        5 & $7$ & $2\sqrt{3}$ &$1$  &$-1$ & $\mathcal{I}(V_1)/(T) $ &Irreducible \\
 		\hline       
        6 & $8$ & $ 46 $ & 0&0 & $\mathcal{I}(V_1)/(T^2+1) \text{ and }\mathcal{I}(V_0)/(T-1) $ &Reducible \\
 		\hline	
        7 & $9$ & $2\zeta_{3}$ & $0$ & $-1$ &$\mathcal{I}(V_1)/(T) $ &Irreducible\\
        \hline
        8 & $10$ & $2$ & $2$ &$2$ & $\mathcal{I}(V_1)/(T^2+1) \text{ and }\mathcal{I}(V_0)/(T-1) $  &Reducible\\
 		\hline
        9 & $12$  & $2\zeta_{3}$ &$0$ &  $0$ &$\mathcal{I}(V_1)/(T^2-T+1) $  & Reducible \\
 		\hline
 		10 & $12$ & 2& 0 & 0 & $\mathcal{I}(V_1)/(T^2+1) \text{ and }\mathcal{I}(V_0)/(T-1) $ & Reducible \\
 		\hline
        11 & $14$ & $2\sqrt{7}$ & $2$ & $1$ & $\mathcal{I}(V_1)/(T)$ & Irreducible\\
        \hline
        12 & $18$ & $-10\pm 2\sqrt{178}$ & $1$ & $3$ & $\mathcal{I}(V_1)/(T^2-T+1) $ & Reducible \\
 		\hline
        
 	\end{tabular} 
\vspace{10mm}

 \noindent We note that the data above align with the following consequences of our work for slope $1$.\\
  $\bullet$ If $r$ is odd, then $\bar{V}_{k,a_{p}}$ must be irreducible.\\
  $\bullet$ If $\tau>t-1$, then the sole contributor to $\bar{\Theta}$ is $\dfrac{I(V_{1})}{(T)}$, as seen in rows $2,3,5,7$ and $11$. \\
  $\bullet$ If $\tau<t-1$, then the sole contributor to $\bar{\Theta}$ is $\dfrac{I(V_{1})}{(T^2-c_{0}T+1)},$ as seen in rows $4$ and $12$.\\
  $\bullet$ If $\tau=t-1$, then both $\dfrac{I(V_{1})}{(T^2-c_{0}T+1)}$ and $\dfrac{I(V_{0})}{(T-{2}/{a_{2}})}$ can possibly contribute. In fact, it follows from the proof of Theorem \ref{mtslope1p=2} that the only contributor to $\bar{\Theta}$ is $\dfrac{I(V_{1})}{(T^2-c_{0}T+1)}$, unless $\overline{\dfrac{2}{a_{2}}} = \overline{\dfrac{2-r}{2\alpha}}$. 
  
  The examples in rows $6,8,9$ and $10$ fall under the case $\tau=t-1$. In row $9$, we observe that $\overline{\dfrac{2}{a_{2}}} = \overline{-1-\zeta_{3}} \neq  \overline{\zeta_{3}}=\overline{\dfrac{2-r}{2\alpha}}$ while in the other cases $\overline{\dfrac{2}{a_{2}}} = 1 =\overline{\dfrac{2-r}{2\alpha}}$. We note that the SAGE output in these cases is consistent with the above observation.

\section*{Acknowledgements}
 The first author is supported by ARG-Matrics grant from Anusandhan National Research Foundation. 
  The second author is supported by the Prime Minister’s Research Fellowship (PMRF), Ministry of Education, Government of India.

\end{document}